\newcommand{\fitline}[1]{%
  \adjustbox{max width=0.93\linewidth}{\ensuremath{\displaystyle #1}}%
}
\newif\ifbmatrix@small
\renewenvironment{bmatrix}
  {%
    \if@display
      \bmatrix@smallfalse
      \left[\env@matrix
    \else
      \bmatrix@smalltrue
      \left[\begin{smallmatrix}
    \fi
  }
  {%
    \ifbmatrix@small
      \end{smallmatrix}\right]%
    \else
      \endmatrix\right]%
    \fi
  }
\newtheorem{observation}{Remark}[section]
\newtheorem{lemma}[observation]{Lemma}  
\newtheorem{theorem}[observation]{Theorem}
\newtheorem{defi}[observation]{Definition}
\newtheorem{example}[observation]{Example}
\newtheorem{proposition}[observation]{Proposition}
\newcommand{\wand}{\ensuremath{
  \mathrel{\vbox{\offinterlineskip\ialign{
    \hfil##\hfil\cr
    $\star$\cr
    \noalign{\kern-1ex}
    $\vert$\cr
}}}}}
\newdimen\w@dth
\def\setw@dth#1#2{\setbox\z@\hbox{\scriptsize $#1$}\w@dth=\wd\z@
\setbox\@ne\hbox{\scriptsize $#2$}\ifnum\w@dth<\wd\@ne \w@dth=\wd\@ne \fi
\advance\w@dth by 1.2em}
\def\t@^#1_#2{\allowbreak\def\n@one{#1}\def\n@two{#2}\mathrel
{\setw@dth{#1}{#2}
\mathop{\hbox to \w@dth{\rightarrowfill}}\limits
\ifx\n@one\empty\else ^{\box\z@}\fi
\ifx\n@two\empty\else _{\box\@ne}\fi}}
\def\t@@^#1{\@ifnextchar_ {\t@^{#1}}{\t@^{#1}_{}}}
\def\t@left^#1_#2{\def\n@one{#1}\def\n@two{#2}\mathrel{\setw@dth{#1}{#2}
\mathop{\hbox to \w@dth{\leftarrowfill}}\limits
\ifx\n@one\empty\else ^{\box\z@}\fi
\ifx\n@two\empty\else _{\box\@ne}\fi}}
\def\t@@left^#1{\@ifnextchar_ {\t@left^{#1}}{\t@left^{#1}_{}}}
\def\two@^#1_#2{\def\n@one{#1}\def\n@two{#2}\mathrel{\setw@dth{#1}{#2}
\mathop{\vcenter{\hbox to \w@dth{\rightarrowfill}\kern-1.7ex
                 \hbox to \w@dth{\rightarrowfill}}%
       }\limits
\ifx\n@one\empty\else ^{\box\z@}\fi
\ifx\n@two\empty\else _{\box\@ne}\fi}}
\def\tw@@^#1{\@ifnextchar_ {\two@^{#1}}{\two@^{#1}_{}}}
\def\tofr@^#1_#2{\def\n@one{#1}\def\n@two{#2}\mathrel{\setw@dth{#1}{#2}
\mathop{\vcenter{\hbox to \w@dth{\rightarrowfill}\kern-1.7ex
                 \hbox to \w@dth{\leftarrowfill}}%
       }\limits
\ifx\n@one\empty\else ^{\box\z@}\fi
\ifx\n@two\empty\else _{\box\@ne}\fi}}
\def\t@fr@^#1{\@ifnextchar_ {\tofr@^{#1}}{\tofr@^{#1}_{}}}
\newdimen\W@dth
\def\setW@dth#1#2{\setbox\z@\hbox{$#1$}\W@dth=\wd\z@
\setbox\@ne\hbox{$#2$}\ifnum\W@dth<\wd\@ne \W@dth=\wd\@ne \fi
\advance\W@dth by 1.2em}
\def\T@^#1_#2{\allowbreak\def\N@one{#1}\def\N@two{#2}\mathrel
{\setW@dth{#1}{#2}
\mathop{\hbox to \W@dth{\rightarrowfill}}\limits
\ifx\N@one\empty\else ^{\box\z@}\fi
\ifx\N@two\empty\else _{\box\@ne}\fi}}
\def\T@@^#1{\@ifnextchar_ {\T@^{#1}}{\T@^{#1}_{}}}
\def\T@left^#1_#2{\def\N@one{#1}\def\N@two{#2}\mathrel{\setW@dth{#1}{#2}
\mathop{\hbox to \W@dth{\leftarrowfill}}\limits
\ifx\N@one\empty\else ^{\box\z@}\fi
\ifx\N@two\empty\else _{\box\@ne}\fi}}
\def\T@@left^#1{\@ifnextchar_ {\T@left^{#1}}{\T@left^{#1}_{}}}
\def\Tofr@^#1_#2{\def\N@one{#1}\def\N@two{#2}\mathrel{\setW@dth{#1}{#2}
\mathop{\vcenter{\hbox to \W@dth{\rightarrowfill}\kern-1.7ex
                 \hbox to \W@dth{\leftarrowfill}}%
       }\limits
\ifx\N@one\empty\else ^{\box\z@}\fi
\ifx\N@two\empty\else _{\box\@ne}\fi}}
\def\T@fr@^#1{\@ifnextchar_ {\Tofr@^{#1}}{\Tofr@^{#1}_{}}}
\def\Two@^#1_#2{\def\N@one{#1}\def\N@two{#2}\mathrel{\setW@dth{#1}{#2}
\mathop{\vcenter{\hbox to \W@dth{\rightarrowfill}\kern-1.7ex
                 \hbox to \W@dth{\rightarrowfill}}%
       }\limits
\ifx\N@one\empty\else ^{\box\z@}\fi
\ifx\N@two\empty\else _{\box\@ne}\fi}}
\def\Tw@@^#1{\@ifnextchar_ {\Two@^{#1}}{\Two@^{#1}_{}}}
\def\to{\@ifnextchar^ {\t@@}{\t@@^{}}}
\def\from{\@ifnextchar^ {\t@@left}{\t@@left^{}}}
\def\tofro{\@ifnextchar^ {\t@fr@}{\t@fr@^{}}}
\def\To{\@ifnextchar^ {\T@@}{\T@@^{}}}
\def\From{\@ifnextchar^ {\T@@left}{\T@@left^{}}}
\def\Two{\@ifnextchar^ {\Tw@@}{\Tw@@^{}}}
\def\Tofro{\@ifnextchar^ {\T@fr@}{\T@fr@^{}}}
\title{From Moore-Penrose to Markov via Gauss}
\author{Cole Comfort and Jean-Simon Pacaud Lemay}
\begin{document}
\allowdisplaybreaks




\maketitle

 \begin{abstract}Markov categories are the central framework for categorical probability theory. Many important concepts from probability theory can be formalized in terms of Markov categories. In particular, conditional probability distributions and Bayes’ theorem are captured via the notion of conditionals in a Markov category. Gaussian probability theory gives an example of a Markov category with conditionals, where the conditionals can be computed using the Moore-Penrose inverse. In this paper, we introduce the Gauss construction on a Moore-Penrose dagger additive category, producing a Markov category with conditionals. Applying the Gauss construction to the category of real matrices recaptures the Gaussian probability theory example, while applying it to the category of complex (resp. quaternionic) matrices gives us new Markov categories of proper complex (resp. quaternionic) Gaussian conditional distributions. Moreover, we also characterize all possible conditionals in the Gauss construction. \end{abstract}

 \noindent \small \textbf{Acknowledgements.} The authors would like to thank Tobias Fritz, Dario Stein, Matthew Di Meglio, and Chris Heunen for useful discussions and interest in this research project. This work has been partially funded by the European Union through the
MSCA SE project QCOMICAL and within the framework of ``Plan France
2030'', under the research projects EPIQ ANR-22-PETQ-0007 and HQI-R\&D
ANR-22-PNCQ-0002. This material is also based upon work supported by the AFOSR under award number FA9550-24-1-0008.

 \tableofcontents

\newpage
\section{Introduction}

Categorical probability theory, as the name suggests, is the field of research of applying category theory to help study the underlying structures for probability theory. By using categorical tools to provide a synthetic version of probability theory, one can then generalize models and results to more general settings. While categorical probability theory has been around since the works of Lawvere \cite{lawvere1962category} and Giry \cite{giry2006categorical}, the field itself really took off in the late 2010s with the landmark paper of Cho and Jacobs \cite{cho2019disintegration}. As such, since the beginning of the 2020s, categorical probability theory is now a well-established field and an active area of research and interest. We invite the reader to see Fritz' paper \cite{fritz2020synthetic} for a lovely introduction to categorical probability theory and its interesting history. 

The primary framework for categorical probability theory is Markov categories (Def~\ref{def:Markov}) which are, briefly, symmetric monoidal categories with copy and discard operations, but where only the discard is preserved by maps. Earlier concepts of Markov categories were introduced by Golubtsov \cite{golubtsov2002monoidal} and Fong \cite{fong2012}, but were fully defined by Cho and Jacobs in \cite{cho2019disintegration} under the name affine copy-discard (CD) categories. In \cite{fritz2020synthetic}, Fritz renamed them Markov categories since Markov kernels, a fundamental structure in probability theory, themselves form a Markov category. Of course, Markov categories are named after Russian mathematician Andrey Markov, famous for his foundational works on Markov chains. 

Many important concepts and results from probability theory can be cleanly formulated in terms of Markov categories. Of particular importance is the notion of conditionals in a Markov category \cite[Sec 11]{fritz2020synthetic} which, as the name suggests, captures the notion of a conditional probability distribution. Briefly, given a map of suitable type in a Markov category, a conditional for this map (Def~\ref{def:conditional}), if it exists, is a sort of transpose map which takes one of the outputs of the codomain as an input in the domain. We can also interpret Bayes' theorem, one of the most well-known theorems from probability theory, in a Markov category with conditionals \cite{cho2019disintegration,golubtsov2002monoidal}. 

Having all conditionals is a highly desirable property for a Markov category to have, since conditionals are highly useful and imply various other interesting properties, see \cite{di2025partial,fritz2020synthetic} for more details. There are many interesting examples of Markov categories with conditionals which capture various frameworks for probability theory. In particular, there is a Markov category with conditionals called $\mathsf{Gauss}$ (Ex~\ref{ex:Gauss}) that captures Gaussian probability theory and whose morphisms are real Gaussian conditional distributions \cite[Ex 6]{fritz2020synthetic}. Of course, Gaussian distributions are one of the most important concepts in statistics and probability,  named after the forever famous German mathematician Carl Friedrich Gauss. Moreover, conditionals in this Markov category $\mathsf{Gauss}$ can be computed using the Moore-Penrose inverse. 

For matrices, the Moore-Penrose inverses is a special kind of generalized inverse with many practical applications, see \cite{campbell2009generalized}. While a Moore-Penrose inverse may not exist for matrices over an arbitrary ring, if one does exist,  then it is unique. In particular, all matrices over the real, complex, and quaternion numbers have a Moore-Penrose inverse. This kind of generalized inverse is named after American mathematician E. Hastings Moore, who first introduced the concept in terms of orthogonal projectors \cite{moore1920reciprocal}, and English mathematician and physicist Roger Penrose, who later (without knowing of Moore's work) introduced the concept in terms of algebraic identities \cite{penrose1955generalized}. The notion of Moore-Penrose inverses can be generalized to other algebraic settings with an involution such as, in particular, to dagger categories. 

Briefly, a dagger category (Def~\ref{def:dagger}) is a category with an involution operation on its maps, which for every map produces a map of dual type. Dagger categories have now been extensively studied, especially during the 2000s and 2010s, and are a fundamental concept for categorical foundations of quantum theory, see \cite{heunen2019categories}. In any dagger category one can define the notion of a Moore-Penrose inverse for maps (Def~\ref{def:MP}). Moore-Penrose inverses in dagger categories were first introduced and studied by Puystjens and Robinson in a series of papers in the 1980s, see for example \cite{puystjens1981moore}, though the concept was mostly left untouched for many years. However, with all the significant developments in dagger category theory, Cockett and the second named author picked up the study of Moore-Penrose inverses in dagger categories again in \cite{EPTCS384.10}. This has led to a recent surge of interest in Moore-Penrose inverses in dagger category theory, such as for new applications in categorical quantum foundations \cite{EPTCS426.3}, graphical languages \cite{booth2024graphical}, developing new generalized inverses in category theory \cite{cockett2024drazin,EPTCS426.12}, and studying Moore-Penrose inverses in new areas \cite{auffarth2025pseudoinversos}. 

The main objective of this paper is to generalize the category $\mathsf{Gauss}$, of real Gaussian conditional distributions,  using the theory of Moore-Penrose dagger categories,  providing new examples of Markov categories with conditionals. To do so, we first observe that Gaussian conditional distributions, and thus maps in $\mathsf{Gauss}$, are fully described in terms of real matrices, while composition in $\mathsf{Gauss}$ makes use of the transpose and sum. As such, it is natural generalize this to the setting of dagger additive categories (Sec~\ref{sec:dac}), so dagger categories with compatible biproducts. This is further justified by the fact that maps in an dagger additive category have a canonical matrix representation \cite[Sec 2.2.4]{heunen2019categories}, so composition is given by matrix multiplication and the involution is given by the conjugate transpose of matrices. Therefore, we introduce the Gauss construction (Sec~\ref{sec:gauss}) on an dagger additive category which produces a Markov category (Thm~\ref{thm:gauss-markov}). The main result of this paper is that when we apply the Gauss construction to a Moore-Penrose dagger additive category, we obtain a Markov category with conditionals (Thm~\ref{thm:MP-Gauss-Markov}). Applying the Gauss construction to real matrices recaptures $\mathsf{Gauss}$ (Ex~\ref{ex:Gauss-real-mat}), while applying the Gauss construction to complex matrices gives a new Markov category of proper complex Gaussian conditional distributions (Ex~\ref{ex:GaussC}).  Similarly, applying the Gauss construction to quaternionic matrices gives a new Markov category of proper quaternionic Gaussian conditional distributions (Ex~\ref{ex:GaussH}).  

An important fact about conditionals that we have yet to mention is that they are not necessarily unique, even if a Markov category has all conditionals. So a map in a Markov category can have multiple possible conditionals. As such, the Moore-Penrose inverse gives but one possible way of building a conditional in the Gauss construction. We characterize all possible conditionals in the Gauss construction (Sec~\ref{sec:cond-in-gauss}) and show that they can be fully described by special maps in the base dagger additive category which we call conditional generators (Def~\ref{def:conditional-generator}). We then show that one can use a weaker version of the Moore-Penrose inverse (Def~\ref{def:MP-ijk}), one that generalizes least squares inverses \cite[Fig 6.1]{campbell2009generalized}, to also build conditionals in the Gauss construction (Lemma~\ref{lemma:phi-bullet}). That said, since (when they exist) Moore-Penrose inverses are always unique, they give a canonical choice of conditionals in the Gauss construction. 

\textbf{Acknowledgements:} The authors would like to thank Tobias Fritz, Dario Stein, Matthew Di Meglio, and Chris Heunen for useful discussions and interest in this research project. 

\section{Markov Categories with Conditionals}

In this background section, we briefly review the basics of the theory of Markov categories, including the notion of conditionals. For an in-depth introduction to Markov categories and categorial probability theory, we refer the reader to~\cite{fritz2020synthetic}. 

We assume that the reader is familiar with the basics of category theory and with symmetric monoidal categories. Briefly, a Markov category is a symmetric monoidal category where each object is a cocommutative comonoid (the dual notion of a commutative monoid) in a coherent way, such that all maps preserve the counit, but importantly not necessarily the comultiplication. For simplicity, in this paper we will work with symmetric \textit{strict} monoidal categories (which is not an issue for Markov category per \cite[Thm 10.17]{fritz2020synthetic}), so we will assume that the associativity and unital isomorphisms for the monoidal product are identities. An arbitrary category will be denoted by $\mathbb{X}$, with objects denoted by capital letters such as $A,B, C$ or $X,Y,Z$, and maps denoted by miniscule letters such as $f,g,h$ or $p,q,s$. Homsets will be denoted as $\mathbb{X}(A,B)$ and maps $f \in \mathbb{X}(A,B)$ as arrows $f: A \to B$. Composition will be denoted using the usual $\circ$, while identity maps by $\mathsf{id}_A: A \to A$. An arbitrary symmetric monoidal category will be denoted with underlying category $\mathbb{X}$, monoidal product $\otimes$, monoidal unit $I$, and symmetry natural isomorphism $\mathsf{swap}_{A,B} = A \otimes B \to B \otimes A$. By strictness, we may write $A_1 \otimes \hdots \otimes A_n$ and $A \otimes I = A = A \otimes I$. We will denote symmetric monoidal categories simply by their underlying category $\mathbb{X}$. For an in-depth introduction to symmetric monoidal categories and their internal comonoids, we refer the reader to \cite{heunen2019categories}.  

\begin{defi}\label{def:Markov} \cite[Def 2.1]{fritz2020synthetic} A \textbf{Markov category} is a symmetric monoidal category $\mathbb{X}$ equipped with two families of maps (indexed by objects $A$) $\mathsf{copy}_A: A \to A \otimes A$, called the \textbf{copy} or \textbf{comultiplication}, and $\mathsf{del}_A: A \to I$, called the \textbf{delete} or \textbf{counit}, such that: 
\begin{enumerate}[{\em (i)}]
\item For each object $A$, $(A, \mathsf{copy}_A, \mathsf{del}_A)$ is a \textbf{cocommutative comonoid}, that is, the following equalities hold:
\begin{gather*}
(\mathsf{copy}_A \otimes \mathsf{id}_A) \circ \mathsf{copy}_A = (\mathsf{id}_A \otimes \mathsf{copy}_A) \circ \mathsf{copy}_A \\
(\mathsf{del}_A \otimes \mathsf{id}_A) \circ \mathsf{copy}_A = \mathsf{id}_A = (\mathsf{id}_A \otimes \mathsf{del}_A) \circ \mathsf{copy}_A \\
\mathsf{swap}_{A,A} \circ \mathsf{copy}_A = \mathsf{copy}_A
\end{gather*}
\item $\mathsf{copy}$ and $\mathsf{del}$ are coherent with the monoidal structure, that is, the following equalities hold: 
\begin{gather*}
\mathsf{copy}_{A \otimes B} = (\mathsf{id}_A \otimes \mathsf{swap}_{A,B} \otimes \mathsf{id}_B) \circ (\mathsf{copy}_A \otimes \mathsf{copy}_B) \\
\mathsf{del}_{A \otimes B} = \mathsf{del}_A \otimes \mathsf{del}_B
\end{gather*}
\item $\mathsf{del}$ is a natural transformation, that is, for every map $f: A \to B$, the following equality holds: 
\begin{align*}
\mathsf{del}_B \circ f = \mathsf{del}_A 
\end{align*}
 \end{enumerate}
\end{defi}

Readers familiar with comonoids will note that asking for $\mathsf{copy}$ and $\mathsf{del}$ be coherent with the monoidal structure is precisely asking that the comonoid structure on $A \otimes B$ be the monoidal product of the comonoid $A$ and $B$ in the usual sense \cite[Lemma 4.8]{heunen2019categories}. Moreover, asking that $\mathsf{del}$ be a natural transformation is equivalent to saying that the monoidal unit $I$ is a \textbf{terminal object} \cite[Prop 4.15]{heunen2019categories}, which  means that there is a unique map from every object into $I$. In a Markov categor, this unique map is precisely $\mathsf{del}_A$. It is again worth stressing that while maps preserve the counit, maps need not preserve the comultiplication, in other words, we do not ask that $\mathsf{copy}$ be natural. Maps that do preserve the comultiplication are called \textit{deterministic}. 

\begin{defi} \cite[Def 10.1]{fritz2020synthetic} In a Markov category $\mathbb{X}$, a map $f: A \to B$ is \textbf{deterministic} if the following equality holds:
\begin{align*} 
\mathsf{copy}_B \circ f = (f \otimes f) \circ \mathsf{copy}_A 
\end{align*}
We denote the subcategory of deterministic maps by $\mathsf{DET}[\mathbb{X}]$. 
\end{defi}

The deterministic maps of a Markov category form a subcategory, and since $\mathsf{copy}$ and $\mathsf{del}$ are deterministic, the deterministic maps actually form a sub-Markov category \cite[Lemma 10.12]{fritz2020synthetic}. Moreover, the subcategory of deterministic maps is in fact a \textit{Cartesian} monoidal category \cite[Remark 10.13]{fritz2020synthetic}, that is, a symmetric monoidal category whose monoidal product is an actual product and whose monoidal unit is a terminal object. 

At this point, it may be useful to briefly give some probability theory interpretations for Markov categories -- a more in-depth discussion of these interpretations can be found in~\cite{fritz2020synthetic}. Maps $f: A \to B$ in a Markov category are interpreted as \textit{Markov kernels}, so ``a function with random outcomes,''  basically means that $f$ assigns to every input value of type $A$ a probability distribution over output values of type $B$. Now $\mathsf{del}_A: A \to I$ simply discards the input and produces no output, while $\mathsf{copy}_A: A \to A \otimes A$ takes in an input value and outputs two copies of the input, without introducing any randomness. In general, applying a Markov kernel $f$ independently to two copies of an input is not equal to applying the Markov kernel first and then copying the result. Thus deterministic maps are those whose random outputs are independent of themselves, thus we can copy them. 

A highly desirable property of a Markov category is having \textit{conditionals}. As the name suggests, having conditionals formalizes the notion of conditional probability distributions, allowing for an interpretation of the famous Bayes' theorem in a Markov category. 

\begin{defi}\label{def:conditional} \cite[Def 11.5]{fritz2020synthetic} In a Markov category $\mathbb{X}$, a \textbf{conditional} for a map $f: A \to B \otimes C$ is a map of type $f\vert_B: B \otimes A \to C$ such that the following equality holds: 
\begin{align}\label{eq:conditional}
(\mathsf{id}_B \otimes f\vert_{B}) \!\circ\! (\mathsf{copy}_B \otimes \mathsf{id}_A) \circ (\mathsf{id}_B \otimes \mathsf{del}_C \otimes \mathsf{id}_A) \circ (f \otimes \mathsf{id}_A) \circ \mathsf{copy}_A \!= \! f
\end{align}
A \textbf{Markov category with conditionals}\footnote{Some references include having conditionals as part of the definition of a Markov category, such as in \cite[Def 2.4]{di2025partial}} is a Markov category such that for all objects $A$, $B$, and $C$, every map $f: A \to B \otimes C$ has a conditional. 
\end{defi}

Of course, for a map $f: A \to B \otimes C$, the above definition could be called a \textit{left} conditional, so we could symmetrically instead ask for a \textit{right} conditional $f\vert_C: A \otimes C \to B$, which would satisfy: 
\begin{align}\label{eq:conditional-right}
(f\vert_{C} \otimes \mathsf{id}_C) \!\circ\! (\mathsf{id}_A \otimes \mathsf{copy}_C) \circ (\mathsf{id}_A \otimes \mathsf{del}_B \otimes \mathsf{id}_C) \circ (\mathsf{id}_A \otimes f) \circ \mathsf{copy}_A \!=\! f
\end{align}
However it is not difficult to see that a right condition of $f$ is precisely a left conditional of $\mathsf{swap}_{B,C} \circ f: A \to C \otimes B$ precomposed with $\mathsf{swap}_{A,C}$. Thus having all left (resp. right) conditionals implies having all right (resp. left) conditionals. Throughout this paper, we will mostly deal with left conditionals and refer to them simply as conditionals. 

It is important to stress that conditionals are \textit{not} unique. Indeed given Markov category with conditionals, the conditionals are unique if and only if the base category is posetal \cite[Prop 11.15]{fritz2020synthetic}, that is, when all parallel maps are equal, or in other words, when there is at most one map between each object. Instead, conditionals are unique up to almost surely equality \cite[Prop 13.7]{fritz2020synthetic}. We will not review almost surely equality here, and invite the curious reader to see \cite{cho2019disintegration,fritz2020synthetic}. 

To help understand (\ref{eq:conditional}), it will be useful to introduce some useful notation for intuition. Following \cite[Notation 2.8]{fritz2020synthetic}, and borrowing from probability theory, given a map $f: A_1 \otimes \hdots \otimes A_n \to B_1 \otimes \hdots \otimes B_m$, we interpret  the $A_i$ as known outcomes, whereas we interpret the $B_j$ as jointly random variables/outcomes. So we suggestively write $f(b_1, \hdots, b_m \vert a_1, \hdots, a_n)$ to represent the probability of outcome $(b_1, \hdots, b_m) \in B_1 \otimes \hdots \otimes B_m$ given that $(a_1, \hdots, a_n) \in A_1 \otimes \hdots \otimes A_n$ is true. Then given a map $f: A \to B \otimes C$, its conditional $f\vert_B: B \otimes A \to C$ is interpreted as the probability distribution of $C$ when the outcome of $B$ is known. Thus (\ref{eq:conditional}) says that $f(b,c\vert a) = f(b\vert a)  f\vert_B(c \vert b,a)$, which means that probability of outcome $(b,c)$ if $a$ is true is equal to the probability of outcome $b$ if $a$ is true (forgetting about $c$) multiplied by the probability outcome of $c$ if $a$ is true knowing $b$. Now if $f$ also has a right conditional $f\vert_C$, then by combining (\ref{eq:conditional}) and (\ref{eq:conditional-right}), we also get that $f\vert_B(b \vert c,a)f(c\vert a) = f(b\vert a)  f\vert_C(c \vert b,a)$. In the special case of setting $A=I$, we may rewrite this latest equality as $f\vert_B(b \vert c)f(c) = f(b) f\vert_C(c\vert b)$, and if $f(c) \neq 0$, we recapture the famous Bayes' theorem $f\vert_B(b \vert c) = \frac{f(b) f\vert_C(c\vert b)}{f(c)}$. For more discussions on conditionals in Markov categories, we invite the reader to see \cite{di2025partial,di2025order,fritz2020synthetic}. 

Now there are many interesting examples of Markov categories with conditionals, many of which capture various important aspects of probability theory, such as the category of finite sets and Markov kernels \cite[Ex 2.5]{fritz2020synthetic}. The main example of interest for the story of this paper is the Markov category which captures \textit{Gaussian probability theory}. 

\begin{example}\label{ex:Gauss} Let $\mathbb{R}$ be the field of real numbers. Define $\mathsf{Gauss}$ \cite[Ex 6]{fritz2020synthetic} to be the category whose objects are the natural numbers $n \in \mathbb{N}$ and where a map $(M,C,s): n \to m$ is triple consisting of  an $m \times n$ $\mathbb{R}$-matrix $M$, a positive semidefinite square $m \times m$ $\mathbb{R}$-matrix $C$, and a (column) vector $s \in \mathbb{R}^m$. A map $(M,C,s): n \to m$ represents the Gaussian conditional distribution of a random variable $Y \in \mathbb{R}^m$ in terms of $X \in \mathbb{R}^n$ as $Y = MX + \xi$, where $\xi \in \mathbb{R}^m$ is Gaussian noise independent from $X$, with expected value $\mathsf{E}[\xi] = s$ and covariance  $\mathsf{VAR}(\xi) = \mathsf{E}[(\xi - E[\xi])(\xi - E[\xi])^{\mathsf{T}}]= C$. Identity maps are given by triples $\mathsf{id}_n := (I_n, 0,0): n \to n$, where $I_n$ is the $n\times n$ identity matrix. While composition is given by:
\[(N,D,t) \circ (M,C,s) = (NM, NCN^{\mathsf{T}} + D, Ns + t)\]
where $\mathsf{T}$ is the transpose operator. Composition corresponds to the \textit{direct combination} of the corresponding Gaussian conditional distributions \cite[Sec 4.4]{lauritzen2001stable}. $\mathsf{Gauss}$ is also a symmetric monoidal category where the monoidal unit is $I =0$, the monoidal product is defined on objects as $n \otimes m= n+m$ and on maps as
\begin{align*}
(M,C,s) \otimes (N,D,t) = \left( \begin{bmatrix} M & 0 \\ 
0 & N \end{bmatrix}, \begin{bmatrix} C & 0 \\ 
0 & D \end{bmatrix}, \begin{bmatrix} s \\ t
\end{bmatrix} \right) 
\end{align*}
 and where the symmetry is given by the triple
\[
\mathsf{swap}_{n,m} \coloneqq \left( \begin{bmatrix} 0 & I_m \\ I_n & 0  \end{bmatrix} ,0,0 \right)
\]
$\mathsf{Gauss}$ is a Markov category where the copy and delete are defined as follows: 
\begin{align*}
    \mathsf{copy}_n := \left( \begin{bmatrix} I_n \\ I_n \end{bmatrix}, 0, 0 \right): n \to n + n && \mathsf{del}_n := (0,0,0): n \to 0
\end{align*}
The deterministic maps are precisely the triples of the form $(M,0,s): n \to m$ \cite[Ex 10.7]{fritz2020synthetic}, that is, the Gaussian conditional distributions whose covariance is zero, meaning that no randomness is involved. $\mathsf{Gauss}$ also has conditionals \cite[Ex 11.8]{fritz2020synthetic}. In particular, for a map of type $(M, C, s): n \to m+k$, we may decompose it in block notation as follows: 
\[ (M, C, s) = \left( \begin{bmatrix} M_1 \\ M_2 \end{bmatrix}, \begin{bmatrix} C_{1} &  C_2 \\ C_3 & C_4 \end{bmatrix}, \begin{bmatrix} s_1 \\ s_2 \end{bmatrix} \right) \]
where $M_1$ is an $m \times n$ matrix, $M_2$ an $k \times n$ matrix, $C_1$ an $m \times m$ matrix, $C_2$ an $m \times k$ matrix, $C_3$ an $k \times m$ matrix, $C_4$ an $k \times k$ matrix, $s_1$ a column vector of size $m$, and lastly $s_2$ is a column vector of size $k$. Then the following map of type $m +n \to k$ is a conditional for $(M,C,s)$
\[ \left( \begin{bmatrix} C_3C_1^\circ & M_2 - C_3 C_1^\circ M_1 \end{bmatrix}, C_4 - C_3 C_1^\circ C_2, s_2 - C_3 C_1^\circ s_1 \right) \]
where $C_1^\circ$ is the \emph{Moore-Penrose inverse} of $C_1$. Recall that Moore-Penrose inverses are special kinds of generalized inverses that exist for any real matrix. We will review Moore-Penrose inverses in Sec~\ref{sec:MP} below. However, since conditionals are not necessarily unique, the Moore-Penrose inverses allows us to build one possible conditional but there may be others. We will investigate this further in Sec~\ref{sec:cond-in-gauss}. 
\end{example}

\section{Dagger Additive Categories and their Matrix Representation}\label{sec:dac}

The main objective of this paper is to generalize the Markov category of Gaussian probability theory by replacing matrices with maps of a \textit{dagger additive category}. So in this background section, we review dagger additive categories and, in particular, their matrix representation. For an in-depth introduction to dagger (additive) categories, we refer the reader to \cite{heunen2019categories}. 

\begin{defi}\label{def:dagger} \cite[Def 2.34]{heunen2019categories} A \textbf{dagger} on a category $\mathbb{X}$ is a contravariant functor $(\_)^\dagger: \mathbb{X} \to \mathbb{X}$ which is the identity on objects and involutive. More concretely, a dagger $\dagger$ associates each map ${f: A \to B}$ to a chosen map of dual type $f^\dagger: B \to A$, called the \textbf{adjoint} of $f$, such that the following equalities hold: 
\begin{align}
\mathsf{id}_A^\dagger = \mathsf{id}_A && (g \circ f)^\dagger = f^\dagger \circ g^\dagger && (f^\dagger)^\dagger = f
\end{align}
A \textbf{dagger category} is a pair $(\mathbb{X}, \dagger)$ consisting of a category $\mathbb{X}$ equipped with a dagger $\dagger$. 
\end{defi}

It is important to note that a category can have multiple different daggers. This means that a dagger on a category is structure which must be chosen. Oftentimes, when it is clear from context, dagger categories are referred to by their underlying category. However in this paper, we will encounter categories with multiple daggers, so we will always write dagger categories as pairs. This will be particularly important in Sec~\ref{sec:MP}, where a category can have Moore-Penrose inverses with respect to one dagger but not with respect to another. 

We now add additive structure to our dagger categories. First recall that a \textbf{pre-additive category} \cite[Def 1.2.1]{borceux1994handbook} is a category enriched over abelian groups, that is, a category $\mathbb{X}$ such that each homset $\mathbb{X}(A,B)$ is an abelian group, with binary operation $+: \mathbb{X}(A,B) \times \mathbb{X}(A,B) \to \mathbb{X}(A,B)$, unit $0 \in \mathbb{X}(A,B)$, and inverse $-: \mathbb{X}(A,B) \to \mathbb{X}(A,B)$, such that composition is a group morphism, that is, the following equalities hold: 
\begin{align*}
g \circ (f_1 + f_2) \circ h = g \circ f_1 \circ h + g \circ f_2 \circ h && g \circ 0 \circ f = 0 
\end{align*}
For dagger categories, we also ask that the dagger be a group homomorphism. 

\begin{defi} A \textbf{dagger pre-additive category} is a dagger category $(\mathbb{X}, \dagger)$ such that $\mathbb{X}$ is a pre-additive category, where additionally $\dagger$ is a group homomorphism, that is, the following equalities hold: 
\begin{align*}
(f+g)^\dagger = f^\dagger + g^\dagger && 0^\dagger = 0 && (-f)^\dagger = -f^\dagger 
\end{align*}
\end{defi}

We can also add biproducts to our dagger categories. First recall that a \textbf{zero object} \cite[Def 1.1.1]{borceux1994handbook} in a category is an object that is both terminal and initial. In a pre-additive category, a zero object can equivalently be defined as an object $\mathsf{0}$ such that for every object $A$, the zero maps $0: A \to \mathsf{0}$ and $0: \mathsf{0} \to A$ are the unique maps to and from $\mathsf{0}$ \cite[Prop 1.2.3]{borceux1994handbook}. Next recall that a \textbf{biproduct} is both a product and coproduct satisfying certain compatibility relations. In a pre-additive category, the \textbf{biproduct} \cite[Def 1.1.1]{borceux1994handbook} of a finite family of objects $A_1$,..., $A_n$, is an object $A_1 \oplus \hdots A_n$ with maps (for all $1 \leq j \leq n$) $\pi_j: A_1 \oplus \hdots A_n \to A_j$, called the \textbf{projections}, and $\iota_j: A_j \to A_1 \oplus \hdots A_n$, called the \textbf{injections}, such that the following equalities hold:
\begin{align*}
\pi_j \circ \iota_i = \begin{cases} \mathsf{id}_{A_j} & \text{if $i=j$}\\
0 & \text{if $i \neq j$}\end{cases} && \iota_1 \circ \pi_1 + \hdots + \iota_n \circ \pi_n = \mathsf{id}_{A_1 \oplus \hdots \oplus A_n} 
\end{align*}
The biproduct of the empty family, if it exists, is precisely a zero object. Then an \textbf{additive category} \cite[Def 1.2.6]{borceux1994handbook} is a pre-additive category with all finite biproducts. For dagger categories, \textit{dagger biproducts} are biproducts whose projections and injections are adjoints of each other.  

\begin{defi}\cite[Def 2.39]{heunen2019categories} A \textbf{dagger additive category} is a dagger pre-additive category $(\mathbb{X}, \dagger)$ which has all finite \textbf{$\dagger$-biproducts}, that is, it has all finite biproducts such that the following equality holds (for all finite families of objects and all $j \in \mathbb{N}$): 
\begin{align*}
\pi_j^\dagger = \iota_j 
\end{align*}
\end{defi}

There are many interesting examples of dagger additive categories, such as the category of Hilbert spaces \cite[Def 2.31]{heunen2019categories} or the category of sets and relations \cite[Def 2.33]{heunen2019categories}. Our main examples of interest in this paper are the dagger additive categories of matrices over a ring.  

\begin{example} Let $R$ be a (unital and associative, but not necessarily commutative) ring. Define $\mathsf{MAT}(R)$ to be the category of $R$-matrices, that is, the category whose objects are the natural numbers $n \in \mathbb{N}$ and where a map ${A: n \to m}$ is an $m \times n$ $R$-matrix $A$, whose coefficients we denote by $A(i,j)$ for $1\leq i \leq m$ and $1 \leq j \leq n$. The identity map of $n$ is the $n \times n$ identity matrix, while composition is given by matrix multiplication, so $A \circ B = AB$. For an $m \times n$ matrix $A$ (which is a map of type $n \to m)$, let $A^\mathsf{T}$ be its transpose, that is, the $n \times m$ matrix (so a map of dual type $m \to n$) given by $A^\mathsf{T}(i,j) = A(j,i)$. Then this makes $(\mathsf{MAT}(R), \mathsf{T})$ into a dagger category. Moreover, $(\mathsf{MAT}(R), \mathsf{T})$ is a dagger additive category where the sum of maps is given by the usual sum of matrices, $(A+B)(i,j) = A(i,j) + B(i,j)$, the zero map is the zero matrix, $0(i,j) = 0$, the negative is the usual negation of a matrix, $(-A)(i,j) = -A(i,j)$, and where the biproduct on objects is given by taking the sum, $n_1 \oplus \hdots \oplus n_m = n_1 + \hdots + n_m$. Our main example will be when $R = \mathbb{R}$, the field of real numbers, giving us the dagger category of real matrices $(\mathsf{MAT}(\mathbb{R}), \mathsf{T})$. 
\end{example}

If the base ring is an involutive ring, then its category of matrices admits another dagger given by conjugate transpose.

\begin{example} Recall that an involutive ring is a ring $R$ equipped with a unary operation $\overline{(\_)}:R\to R$, called the involution, such that $\overline{x+y} = \overline{x} + \overline{y}$, $\overline{1} = 1$, $\overline{xy} = \overline{y}~ \overline{x}$, and $\overline{\overline{x}} = x$. Given an $m \times n$ $R$-matrix $A$, let $A^\ast$ denote its conjugate transpose, that is, the $n \times m$ matrix given by $A^\ast(i,j) = \overline{A(j,i)}$. This makes $(\mathsf{MAT}(R), \ast)$ into a dagger category. Moreover, $(\mathsf{MAT}(R), \ast)$ is also dagger additive category with the same additive structure as in the above example. Our two main examples will be when $R = \mathbb{C}$, the field of complex numbers, whose involution is given by complex conjugation, giving us the dagger of complex matrices $(\mathsf{MAT}(\mathbb{C}), \ast)$, or when $R = \mathbb{H}$, the skew field of quaternions, whose involution is given by quaternionic conjugation, giving us the dagger of qauternionic matrices $(\mathsf{MAT}(\mathbb{H}), \ast)$. 
\end{example}

A very practical fact about additive categories is that maps have \textit{matrix representation} \cite[Sec 2.2.4]{heunen2019categories}, which comes from the fact that a biproduct admits the universal properties of both a product and coproduct. In particular, this matrix representation implies that composition in an additive category is essentially given by matrix multiplication. So in an additive category, every map $f: A_1 \oplus \hdots \oplus A_n \to B_1 \oplus \hdots \oplus B_m$ is uniquely determined by a family of maps $f(i,j): A_j \to B_i$, defined below on the left, such that the equality on the right holds: 
\begin{align*} 
f(i,j) = \pi_i \circ f \circ \iota_j && f = \sum_{i,j} \iota_i \circ f(i,j) \circ \pi_j 
\end{align*}
Therefore $f$ can be represented as a $m\times n$ matrix \cite[Lemma 2.26]{heunen2019categories} where the term in the $i$-th row and $j$-th column is $f(i,j)$, so:
\begin{align*}
f = \begin{bmatrix} f(i,j) \end{bmatrix} = \begin{bmatrix} f(1,1) & f(1,2) & \hdots & f(1,n) \\
f(2,1) & f(2,2) & \hdots& f(2,n) \\
\vdots & \vdots & \ddots & \vdots \\
f(m,1) & f(m,2) & \hdots & f(m,n) \end{bmatrix}
\end{align*}
Moreover, two maps in an additive category are equal if and only if they have the same matrix representation \cite[Cor 2.27]{heunen2019categories}. Now given composable maps $f: \bigoplus\limits^n_{i=1} A_i \to \bigoplus\limits^m_{j=1} B_j$ and $g: \bigoplus\limits^m_{j=1} B_j \to \bigoplus\limits^p_{k=1} C_k$, the matrix representation of their composition $g \circ f: \bigoplus\limits^n_{i=1} A_i \to \bigoplus\limits^p_{k=1} C_k$ is given by matrix multiplication \cite[Prop 2.28]{heunen2019categories}, 
\begin{align*}
g \circ f = \begin{bmatrix} g(k,l) \end{bmatrix} \circ \begin{bmatrix} f(i,j) \end{bmatrix} = \begin{bmatrix} \sum \limits^{n}_{j=1} g(j,k) \circ f(i,j) \end{bmatrix} 
\end{align*}
while the matrix representation of the identity map $\mathsf{id}_{A_1 \oplus \hdots \oplus A_m}$ is precisely the identity matrix, that is, with identities on the diagonal and zero everywhere else: 
\begin{align*}
\mathsf{id}_{A_1 \oplus \hdots \oplus A_m} = \begin{bmatrix} \mathsf{id}_{A_1} & 0 & \hdots & 0 \\
0 & \mathsf{id}_{A_2} & \hdots & 0 \\
\vdots & \vdots & \ddots & \vdots \\
0 & 0 & \hdots & \mathsf{id}_{A_m} \end{bmatrix}
\end{align*}
Now for maps $f:  \bigoplus\limits^n_{i=1} A_i \to \bigoplus\limits^m_{j=1} B_j$ and $g:  \bigoplus\limits^n_{i=1} A_i \to \bigoplus\limits^m_{j=1} B_j$, the matrix representation of their sum $f+g$ is given by the usual sum of matrices, so by adding coefficients point-wise, the matrix representation of the negation $-f$ is also by taking the negation pointwise, while the matrix representation of the zero map $0: \bigoplus\limits^n_{i=1} A_i \to \bigoplus\limits^m_{j=1} B_j$ is of course simply the zero matrix: 
\begin{gather*}
f + g = \begin{bmatrix} f(i,j) \end{bmatrix} + \begin{bmatrix} g(i,j) \end{bmatrix} = \begin{bmatrix} f(i,j) + g(i,j) \end{bmatrix} \\
-f = - \begin{bmatrix} f(i,j) \end{bmatrix}  = \begin{bmatrix} -f(i,j) \end{bmatrix} \\
0 = \begin{bmatrix} 0 & 0 & \hdots & 0 \\
0 & 0 & \hdots & 0 \\
\vdots & \vdots & \ddots & \vdots \\
0 & 0 & \hdots & 0 \end{bmatrix}
\end{gather*}
In a dagger additive category, for a map $f: A_1 \oplus \hdots \oplus A_m \to B_1 \oplus \hdots \oplus B_n$, the matrix representation of its adjoints $f^\dagger:  B_1 \oplus \hdots \oplus B_n \to A_1 \oplus \hdots \oplus A_m$ is the conjugate transpose of the matrix representation of $f$ \cite[Lemma 2.41]{heunen2019categories}:
\begin{align*}
f^\dagger = \begin{bmatrix} f(1,1)^\dagger & f(2,1)^\dagger & \hdots & f(n,1)^\dagger \\
f(1,2)^\dagger & f(2,2)^\dagger & \hdots& f(n,2)^\dagger \\
\vdots & \vdots & \ddots & \vdots \\
f(1,m)^\dagger & f(2,m)^\dagger & \hdots & f(n,m)^\dagger \end{bmatrix} 
\end{align*}

\begin{example}  For any ring $R$, notice that in $\mathsf{MAT}(R)$, every object $n$ is the biproduct of $n$-copies of $1$, that is, $n = 1 \oplus \hdots \oplus 1$. Then for a map $A: n \to m$, which is an $m \times n$ matrix $A$, its matrix representation in the additive category sense will be precisely $A$, where in particular $\pi_i \circ A \circ \iota_j$ is precisely $A(i,j)$, the coefficient of $A$ in the $i$-th row and $j$-th column, seen as a map $1 \to 1$ (which is just an element of $R$ seen as a $1 \times 1$ matrix). 
\end{example}

\section{The Gauss Construction}\label{sec:gauss}

In this section, we introduce the \textit{Gauss construction} on a dagger additive category which produces a Markov category. The Gauss construction generalizes the Gaussian probability theory example in the sense that when applying the Gauss construction to the category of real matrices, we recover the Markov category $\mathsf{Gauss}$ from Ex~\ref{ex:Gauss}. 

Before we jump into giving the definition of the Gauss construction, we need to discuss what the maps might look like. Taking $\mathsf{Gauss}$ as our guiding light, it is clear that the objects in the Gauss construction will be the same as the objects of our base dagger additive category, while maps in the Gauss construction will be triples. Now the first component of the triple will be a map in the base category. So we need to address the other two components. In $\mathsf{Gauss}$, recall that the second component was a positive semidefinite square matrix. Now square matrices will correspond to endomorphisms, but what about positive semidefiniteness? It turns out that positive semidefiniteness is captured in a dagger category via the notion of a \textit{positive} map, which is an endomorphism that is the composite of another map and its adjoints.  

\begin{defi} \cite[Def 2.34]{heunen2019categories} In a dagger category $(\mathbb{X}, \dagger)$, a \textbf{$\dagger$-positive map} is an endomorphism $p: A \to A$ such that there exists a map $\phi: A \to B$ satisfying $p = \phi^\dagger \circ \phi$. 
\end{defi}

It is important to stress that for a $\dagger$-positive map $p$ there may be distinct maps $\phi$ such that $p = \phi^\dagger \circ \phi$. 

\begin{example} Recall the well-known fact that a real square $n \times n$ matrix $A$ is positive semidefinite if and only if there exists a real (resp. complex) $m \times n$ matrix $B$ such that $A= B^\mathsf{T}B$. Therefore, in $(\mathsf{MAT}(\mathbb{R}), \mathsf{T})$, the $\mathsf{T}$-positive maps correspond precisely to real (resp. complex) positive semidefinite square matrices. The same is true for complex (resp. quaternionic) matrices, thus $\ast$-positive maps in $(\mathsf{MAT}(\mathbb{C}), \ast)$ (resp. $(\mathsf{MAT}(\mathbb{H}), \ast)$), meaning this time that $A=B^\ast B$, correspond precisely to complex (resp. quaternionic) positive semidefinite square matrices. 
\end{example}

Recall that the third component of a map in $\mathsf{Gauss}$ was a column vector, which we may view as a map of type $1 \to m$ in our category of matrices. While $1$ is indeed a special object in our category of matrices, an arbitrary dagger additive category does not necessarily have such an object. Instead, we need to fix an object of our base category and take maps out of this object for our third components of maps in the Gauss construction. 

We are now in a position to give our Gauss construction. So for the remainder of this section, let $(\mathbb{X}, \dagger)$ be a dagger additive category and fix an object $X \in \mathbb{X}$. Then define $\mathfrak{G}\left[ (\mathbb{X}, \dagger) \right]_X$ to be the category where:
 \begin{enumerate}[{\em (i)}]
\item The objects of $\mathfrak{G}\left[ (\mathbb{X}, \dagger) \right]_X$ are the same objects of $\mathbb{X}$; 
\item A map $F: A \to B$ in $\mathfrak{G}\left[ (\mathbb{X}, \dagger) \right]_X$ is a triple $F=(f, p, x): A \to B$ consisting of a map $f: A \to B$, a $\dagger$-positive map $p: B \to B$, and a map $x: X \to B$;
\item The identity of $A$ in $\mathfrak{G}\left[ (\mathbb{X}, \dagger) \right]_X$ is the triple $\mathsf{Id}_A = (\mathsf{id}_A, 0, 0): A \to A$;
\item Composition of $F = (f,p,x): A \to B$ and $G = (g,q,y): B \to C$ is defined as follows:
\begin{align*}
G \circ F = (g,q,y) \circ (f,p,x) = (g \circ f, q + g \circ p \circ g^\dagger, y + g \circ x): A \to C
\end{align*}
 \end{enumerate}

\begin{lemma} $\mathfrak{G}\left[ (\mathbb{X}, \dagger) \right]_X$ is category. 
\end{lemma}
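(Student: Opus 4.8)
The plan is to verify the three defining properties of a category: that composition as defined actually lands in the stated set of triples (\emph{well-definedness}), that composition is \emph{associative}, and that $\mathsf{Id}$ provides \emph{units}. The only non-routine point is well-definedness — specifically, that the middle component $q + g \circ p \circ g^\dagger$ of a composite is genuinely a $\dagger$-positive map — since the first and third components are visibly of the right type. Everything else is bookkeeping with the pre-additive and dagger structure.

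First I would treat well-definedness. Given $F = (f,p,x) \colon A \to B$ and $G = (g,q,y) \colon B \to C$, the first component $g \circ f \colon A \to C$ and third component $y + g \circ x \colon X \to C$ are evidently of the correct type. For the middle component, pick $\phi \colon B \to D$ with $p = \phi^\dagger \circ \phi$; then by contravariant functoriality and involutivity of $\dagger$,
$$g \circ p \circ g^\dagger = g \circ \phi^\dagger \circ \phi \circ g^\dagger = (\phi \circ g^\dagger)^\dagger \circ (\phi \circ g^\dagger),$$
so $g \circ p \circ g^\dagger$ is $\dagger$-positive. Likewise $q = \psi^\dagger \circ \psi$ for some $\psi \colon C \to E$. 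To see that the sum of two $\dagger$-positive endomorphisms of $C$ is again $\dagger$-positive, I would use the $\dagger$-biproduct: form $\begin{bmatrix} \psi \\ \phi \circ g^\dagger \end{bmatrix} \colon C \to E \oplus D$, whose adjoint is $\begin{bmatrix} \psi^\dagger & (\phi \circ g^\dagger)^\dagger \end{bmatrix}$ because $\pi_j^\dagger = \iota_j$; composing the adjoint with the original and using the biproduct equations together with bilinearity of composition yields exactly $\psi^\dagger \circ \psi + (\phi \circ g^\dagger)^\dagger \circ (\phi \circ g^\dagger) = q + g \circ p \circ g^\dagger$. Hence $G \circ F$ is a legitimate triple. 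This is the step I expect to be the main obstacle, as it is the one genuine use of the dagger additive hypotheses.

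Next, associativity. For $F = (f,p,x)$, $G = (g,q,y)$, $H = (h,r,z)$ composable in order, I would expand $(H \circ G) \circ F$ and $H \circ (G \circ F)$ componentwise. The first components agree by associativity of $\circ$ in $\mathbb{X}$. The third components both reduce to $z + h \circ y + h \circ g \circ x$ since composition distributes over $+$. For the middle components, the relevant identities are $(h \circ g)^\dagger = g^\dagger \circ h^\dagger$ and $h \circ (g \circ p \circ g^\dagger) \circ h^\dagger = (h \circ g) \circ p \circ (h \circ g)^\dagger$, after which both sides become $r + h \circ q \circ h^\dagger + h \circ g \circ p \circ g^\dagger \circ h^\dagger$, again pulling sums through composition via bilinearity.

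Finally, the unit laws. Using $\mathsf{id}^\dagger = \mathsf{id}$ and the pre-additive identities $f \circ 0 = 0$, $0 \circ f = 0$, $p + 0 = p$, one computes directly that
$$\mathsf{Id}_B \circ (f,p,x) = (\mathsf{id}_B \circ f,\; 0 + \mathsf{id}_B \circ p \circ \mathsf{id}_B^\dagger,\; 0 + \mathsf{id}_B \circ x) = (f,p,x)$$
and $(f,p,x) \circ \mathsf{Id}_A = (f \circ \mathsf{id}_A,\; p + f \circ 0 \circ f^\dagger,\; x + f \circ 0) = (f,p,x)$. Altogether these establish that $\mathfrak{G}\left[ (\mathbb{X}, \dagger) \right]_X$ is a category; apart from the $\dagger$-positivity closure above, the verification is purely formal.
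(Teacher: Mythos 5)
Your proposal is correct and follows essentially the same route as the paper: the one substantive point in both arguments is that the sum of $\dagger$-positive endomorphisms is again $\dagger$-positive, established via the column map into a $\dagger$-biproduct, with associativity and unitality then following formally from the (pre-)additive and dagger axioms. (The only thing the paper mentions that you leave implicit is the triviality that $0 = 0^\dagger \circ 0$, so the identity triples are themselves well-defined.)
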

\begin{proof} We first need to explain why composition and identities are well-defined, in particular, we need to address the second component and explain why it is $\dagger$-positive. Now clearly every zero map $0$ is $\dagger$-positive since $0 = 0^\dagger \circ 0$, and so identity maps are well-defined in $\mathfrak{G}\left[ (\mathbb{X}, \dagger) \right]_X$. Now for composition, we need to justify why the sum of $\dagger$-positive maps is again $\dagger$-positive. In a general dagger pre-additive category, this need not be the case! That said, in a dagger additive category, dagger biproducts come to the rescue, and in this case the sum of $\dagger$-positive maps is indeed $\dagger$-positive. To see this, consider two $\dagger$-positive maps $\phi: A \to A$ and $\psi: A \to A$, where $\phi = \alpha^\dagger \circ \alpha$ for some $\alpha: A \to B$ and $\psi = \beta^\dagger \circ \beta$ for some ${\beta: A \to C}$. So consider the map $\begin{bmatrix} \alpha \\ \beta  \end{bmatrix}: A \to B \oplus C$, and we compute that: 
\[ \begin{bmatrix} \alpha \\ \beta \end{bmatrix}^\dagger \circ \begin{bmatrix} \alpha \\ \beta \end{bmatrix} = \begin{bmatrix} \alpha^\dagger & \beta^\dagger \end{bmatrix} \circ \begin{bmatrix} \alpha \\ \beta \end{bmatrix} =  \alpha^\dagger \circ \alpha + \beta^\dagger \circ \beta = \phi + \psi \]
So in the presence of $\dagger$-biproducts, the sum of $\dagger$-positive maps is again $\dagger$-positive. As such, composition in $\mathfrak{G}\left[ (\mathbb{X}, \dagger) \right]_X$ is well-defined. Now it is easy to see that composition in $\mathfrak{G}\left[ (\mathbb{X}, \dagger) \right]_X$ is associative and unital since composition and addition is associative and unital in $\mathbb{X}$, $\dagger$ is a contravariant functor, and also that composition and $\dagger$ preserves the additive structure. So we conclude that $\mathfrak{G}\left[ (\mathbb{X}, \dagger) \right]_X$ is indeed a category. 
\end{proof}

Next we define a monoidal structure on  $\mathfrak{G}\left[ (\mathbb{X}, \dagger) \right]_X$. To do so, recall that every additive category is a symmetric monoidal category whose monoidal structure is given its finite biproduct. Explicitly, the monoidal product is the biproduct $\oplus$, which is defined on object as simply $A \oplus B$, while for maps $f: A \to B$ and $g: C \to D$, their biproduct $f \oplus g: A \oplus C \to B \oplus D$ is expressed as a diagonal matrix: 
\[ f \oplus g = \begin{bmatrix} f & 0 \\ 0 & g \end{bmatrix}  \]
The monoidal unit is the zero object $\mathsf{0}$, while the natural symmetry isomorphism $\mathsf{swap}^\oplus_{A,B}: A \oplus B \to B \oplus A$ is the $2 \times 2$ permutation swap matrix: 
\[ \mathsf{swap}^\oplus_{A,B} = \begin{bmatrix} 0 & \mathsf{id}_B \\ \mathsf{id}_A & 0  \end{bmatrix} \]
With this we can now define a symmetric monoidal structure on $\mathfrak{G}\left[ (\mathbb{X}, \dagger) \right]_X$. So define the monoidal product $\otimes$ on objects as $A \otimes B = A \oplus B$ and defined on maps $F=(f,p,s): A \to B$ and $G=(g,q,t): C \to D$ as follows: 
\begin{align*}
F \otimes G &=~ (f,p,x) \otimes (g,q,y) = (f \oplus g, p \oplus q, \begin{bmatrix} x \\ y
\end{bmatrix}) \\
&=~ \left( \begin{bmatrix} f & 0 \\ 
0 & g \end{bmatrix}, \begin{bmatrix} p & 0 \\ 
0 & q \end{bmatrix}, \begin{bmatrix} x \\ y
\end{bmatrix} \right): A \otimes B \to C \otimes D 
\end{align*}
The monoidal unit will be the zero object of $\mathbb{X}$, $I = \mathsf{0}$. The natural symmetry isomorphism $\mathsf{swap}_{A,B}: A \otimes B \to B \otimes A$ is defined as the triple:
\begin{align*}
\mathsf{swap}_{A,B} = \left(\mathsf{swap}^\oplus_{A,B}, 0, 0 \right) = \left( \begin{bmatrix} 0 & \mathsf{id}_B \\ \mathsf{id}_A & 0  \end{bmatrix}, \begin{bmatrix} 0 & 0 \\ 0 & 0  \end{bmatrix}, \begin{bmatrix} 0 \\ 0  \end{bmatrix} \right): A \otimes B \to B \otimes A 
\end{align*}

\begin{lemma} $\mathfrak{G}\left[ (\mathbb{X}, \dagger) \right]_X$ is a symmetric monoidal category. 
\end{lemma}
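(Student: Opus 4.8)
The plan is to verify that the data just defined — the monoidal product $\otimes$ on objects and maps, the monoidal unit $I = \mathsf{0}$, and the symmetry $\mathsf{swap}_{A,B}$ — satisfy the axioms of a symmetric (strict) monoidal category. Since everything in sight is built componentwise out of the biproduct structure of $(\mathbb{X},\dagger)$, which is already known to be a symmetric monoidal category under $\oplus$, essentially all the coherence laws will follow by reducing to the corresponding laws in $\mathbb{X}$ component by component. First I would check that $\otimes$ is a well-defined functor: that it preserves identities, $\mathsf{Id}_A \otimes \mathsf{Id}_B = \mathsf{Id}_{A\otimes B}$, which is immediate since $\mathsf{id}_A \oplus \mathsf{id}_B = \mathsf{id}_{A\oplus B}$ and the other two components are zero; and that it preserves composition, $(G'\circ G)\otimes (F'\circ F) = (G'\otimes F')\circ(G\otimes F)$. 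For the latter, the first component reduces to functoriality of $\oplus$, the third component to a routine bookkeeping of block column vectors, and the second component is the only place requiring a small computation: one must check that $(q' + g'\circ q\circ g'^\dagger)\oplus(p' + g\circ p\circ g^\dagger)$ equals $(q'\oplus p') + (g'\oplus g)\circ(q\oplus p)\circ(g'\oplus g)^\dagger$, which holds because $\oplus$ distributes over $+$, because $(g'\oplus g)^\dagger = g'^\dagger \oplus g^\dagger$ in a dagger biproduct, and because composition of block-diagonal matrices is block-diagonal.

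Next I would establish strict associativity and unitality of $\otimes$. Associativity $(F\otimes G)\otimes H = F\otimes(G\otimes H)$ holds componentwise: the first two components inherit strict associativity of $\oplus$ (after identifying $(A\oplus B)\oplus C$ with $A\oplus B\oplus C$ as in the strictness convention announced in Section 2), and the third component is the evident identification of nested column vectors $\begin{bmatrix}\begin{bmatrix}x\\y\end{bmatrix}\\z\end{bmatrix}$ with $\begin{bmatrix}x\\\begin{bmatrix}y\\z\end{bmatrix}\end{bmatrix}$. Unitality $F\otimes \mathsf{Id}_{\mathsf{0}} = F = \mathsf{Id}_{\mathsf{0}}\otimes F$ is equally immediate since $\mathsf{0}$ is a zero object, so $f\oplus \mathsf{id}_{\mathsf0} = f$, $p\oplus 0 = p$, and the third component column vector over $\mathsf{0}$ is trivial. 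I would also note that the $\dagger$-positivity of the second component $p\oplus q$ is guaranteed by the argument already given in the proof of the previous lemma (if $p = \alpha^\dagger\circ\alpha$ and $q = \beta^\dagger\circ\beta$ then $p\oplus q = (\alpha\oplus\beta)^\dagger\circ(\alpha\oplus\beta)$), so $F\otimes G$ is a genuine morphism of $\mathfrak{G}[(\mathbb{X},\dagger)]_X$.

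Finally I would verify that $\mathsf{swap}_{A,B}$ is a natural isomorphism and satisfies the symmetric monoidal (hexagon and involution) axioms. Naturality amounts to $\mathsf{swap}_{C,D}\circ(F\otimes G) = (G\otimes F)\circ\mathsf{swap}_{A,B}$ for $F\colon A\to C$, $G\colon B\to D$; this reduces componentwise — first component to naturality of $\mathsf{swap}^\oplus$ in $\mathbb{X}$, third component to $\mathsf{swap}^\oplus$ permuting the block column vector, and second component to the identity $\mathsf{swap}^\oplus_{C,D}\circ(p\oplus q)\circ(\mathsf{swap}^\oplus_{C,D})^\dagger = q\oplus p$, using that $(\mathsf{swap}^\oplus)^\dagger = (\mathsf{swap}^\oplus)^{-1}$ is again a permutation matrix since $\pi_j^\dagger = \iota_j$. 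Being its own inverse, $\mathsf{swap}_{B,A}\circ\mathsf{swap}_{A,B} = \mathsf{Id}_{A\otimes B}$, follows from $\mathsf{swap}^\oplus_{B,A}\circ\mathsf{swap}^\oplus_{A,B} = \mathsf{id}$ in $\mathbb{X}$ together with the fact that the extra components vanish; and the hexagon identity likewise reduces to the hexagon for $\mathsf{swap}^\oplus$ in $\mathbb{X}$, the other two components being zero on both sides. The only mild obstacle anywhere in this proof is keeping the block-matrix/dagger bookkeeping straight in the second components — in particular remembering to use $\pi_j^\dagger = \iota_j$ so that conjugating a block-diagonal or permutation matrix by $(\_)^\dagger$ behaves as the matrix transpose does classically — but there is no conceptual difficulty, since $\mathfrak{G}[(\mathbb{X},\dagger)]_X$ inherits all of its monoidal coherence from $(\mathbb{X},\oplus)$.
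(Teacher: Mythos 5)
Your proposal is correct and follows essentially the same route as the paper: establish $\dagger$-positivity of $p\oplus q$ to see that $F\otimes G$ is well defined, reduce functoriality of $\otimes$ to the block-matrix identities $(g\oplus k)\circ(f\oplus h)=(g\circ f)\oplus(k\circ h)$, $(f\oplus g)^\dagger=f^\dagger\oplus g^\dagger$, and distributivity of $\oplus$ over $+$, and then check naturality and involutivity of $\mathsf{swap}$ by conjugating the block-diagonal second component with $\mathsf{swap}^\oplus$. The remaining coherence (strict associativity, unitality, hexagon) is handled the same way in both arguments, by inheritance from the biproduct symmetric monoidal structure on $(\mathbb{X},\oplus)$.
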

\begin{proof} We first need to explain why the monoidal product of maps is well-defined. This follows from the fact that the $\dagger$-biproduct of $\dagger$-positive maps is indeed $\dagger$-positive. To see this, consider $\dagger$-positive maps $p: A \to B$ and $q: C \to D$, with $p = \alpha^\dagger \circ \alpha$ for some map $\alpha: A \to W$ and $q = \beta^\dagger \circ \beta$ for some map $\beta: C \to V$. Then consider the map $\alpha \oplus \beta: A \oplus C \to W \oplus V$. We easily see that: 
\begin{align*}
(\alpha \oplus \beta)^\dagger \circ (\alpha \oplus \beta) &=~ \begin{bmatrix} \alpha & 0 \\ 
0 & \beta\end{bmatrix}^\dagger  \circ \begin{bmatrix} \alpha & 0 \\ 
0 &  \beta \end{bmatrix}  = \begin{bmatrix} \alpha^\dagger & 0 \\ 
0 & \beta^\dagger \end{bmatrix} \circ \begin{bmatrix} \alpha & 0 \\ 
0 &  \beta \end{bmatrix} \\
&=~ \begin{bmatrix} \alpha^\dagger \circ \alpha & 0 \\ 
0 & \beta^\dagger \circ \beta \end{bmatrix} = \begin{bmatrix} p & 0 \\ 
0 & q \end{bmatrix} = p \oplus q
\end{align*}
So $p \oplus q$ is $\dagger$-positive. As such, the monoidal product in $\mathfrak{G}\left[ (\mathbb{X}, \dagger) \right]_X$ is well-defined. Clearly, the natural symmetry isomorphism is also well-defined, so the symmetric monoidal structure on $\mathfrak{G}\left[ (\mathbb{X}, \dagger) \right]_X$ is well-defined. 

Next we need to prove that  $\otimes$ is functorial. Since $\mathsf{id}_{A \oplus B} = \mathsf{id}_A \oplus \mathsf{id}_B$, it is straightforward to see that $\mathsf{Id}_{A \otimes B} = \mathsf{Id}_A \otimes \mathsf{Id}_B$. To show that $\otimes$ also preserves composition, we will need the following identities that hold in any dagger additive category: 
\begin{align*}
(g \oplus k) \circ (f \oplus h) = (g \circ f) \oplus (k \circ h) &\quad& (f \oplus g)^\dagger  = f^\dagger \oplus g^\dagger\\
 (g \oplus f) + (k \oplus h) = (g + k) \oplus (f + h)  &\quad& (f \oplus g) \circ \begin{bmatrix} h \\ k
\end{bmatrix} = \begin{bmatrix} f \circ h \\ g \circ k 
\end{bmatrix}
\end{align*}
With these identities, we compute that: 
\begin{align*}
&(G \otimes K) \circ (F \otimes H) = \left( (g,q,y) \otimes (k, u, w) \right) \circ \left((f,p,x) \otimes (h, r, z) \right) \\
&= \left(g \oplus k, q \oplus u, \begin{bmatrix} y \\ w
\end{bmatrix} \right) \circ \left(f \oplus h, p \oplus r, \begin{bmatrix} x \\ z
\end{bmatrix} \right) \\
&= \fitline{\left( (g \oplus k) \circ (f \oplus h), (q \oplus u) + (g \oplus k) \circ (p \oplus r) \circ (g \oplus k)^\dagger, \begin{bmatrix} y \\ w
\end{bmatrix} + (g \oplus k) \circ \begin{bmatrix} x \\ z
\end{bmatrix} \right) }\\
&=  \fitline{\left( (g \oplus k) \circ (f \oplus h), (q \oplus u) +  (g \oplus k) \circ (p \oplus r) \circ (g^\dagger \oplus k^\dagger), \begin{bmatrix} y \\ w
\end{bmatrix} + \begin{bmatrix} g \circ x \\ k \circ z
\end{bmatrix} \right)}  \\
&=\left( (g \circ f) \oplus (k \circ h), (q \oplus u) + \left( (g \circ p \circ g^\dagger) \oplus (k \circ r \circ k^\dagger) \right), \begin{bmatrix} y + g \circ x \\ w + k \circ z
\end{bmatrix} \right) \\
&= \left( (g \circ f) \oplus (k \circ h),  (q + g \circ p \circ g^\dagger) \oplus (u + k \circ r \circ k^\dagger) , \begin{bmatrix} y + g \circ x \\ w + k \circ z
\end{bmatrix} \right) \\
&= (g \circ f, q + g \circ p \circ g^\dagger, y + g \circ x) \otimes(k \circ h, u + k \circ r \circ k^\dagger, w + k \circ z) \\ 
&=  \left( (g,q,y) \circ (f,p,x) \right) \otimes \left( (k, u, w) \circ (h, r, z) \right) = (G \circ F) \otimes (K \circ H)
\end{align*}
So $\otimes$ is indeed functorial. Next, since $\mathsf{swap}^\oplus_{B,A} \circ \mathsf{swap}^\oplus_{A,B} = \mathsf{id}_{A \oplus B}$, it  follows that $\mathsf{swap}_{B,A} \circ \mathsf{swap}_{A,B} = \mathsf{Id}_{A \otimes B}$.  It remains to show naturality. To prove this, we make use of the following identities that the symmetry map satisfies in any dagger additive category:
\begin{gather*}
(f \oplus g) \circ \mathsf{swap}_{A,B} = \mathsf{swap}_{D,C} \circ (g \oplus f) \qquad \mathsf{swap}^{\oplus~\dagger}_{A,B} = \mathsf{swap}^{\oplus}_{B,A} \\
   \mathsf{swap}^\oplus_{C,D} \circ \begin{bmatrix} y \\ x
\end{bmatrix} = \begin{bmatrix} x \\ y
\end{bmatrix}
\end{gather*}
Then we compute that: 
\begin{align*}
&\mathsf{swap}_{D,C} \circ (G \otimes F) = \left(\mathsf{swap}^\oplus_{C,D}, 0, 0 \right) \circ \left( (g,q,y) \otimes (f,p,x) \right)\\
 &= \left(\mathsf{swap}^\oplus_{C,D}, 0, 0 \right) \circ \left(g \oplus f, q \oplus p, \begin{bmatrix} y \\ x
\end{bmatrix} \right) \\
&= \left( \mathsf{swap}^\oplus_{D,C} \circ (g \oplus f), 0 + \mathsf{swap}^\oplus_{D,C} \circ (q \oplus p) \circ \mathsf{swap}^{\oplus~\dagger}_{D,C}, 0 + \mathsf{swap}^\oplus_{C,D} \circ \begin{bmatrix} y \\ x
\end{bmatrix} \right) \\ 
&= \left( (f \oplus g) \circ \mathsf{swap}^\oplus_{A,B}, \mathsf{swap}^\oplus_{D,C} \circ (q \oplus p) \circ \mathsf{swap}^\oplus_{C,D}, \mathsf{swap}^\oplus_{C,D} \circ \begin{bmatrix} y \\ x
\end{bmatrix} \right) \\
&= \left( (f \oplus g) \circ \mathsf{swap}^\oplus_{A,B}, (p \oplus q) \circ \mathsf{swap}^\oplus_{D,C} \circ \mathsf{swap}^\oplus_{C,D},  \begin{bmatrix} x \\ y
\end{bmatrix} \right) \\ 
&= \left( (f \oplus g) \circ \mathsf{swap}^\oplus_{A,B}, (p \oplus q),  \begin{bmatrix} x \\ y
\end{bmatrix} \right)\\
&= \left( (f \oplus g) \circ \mathsf{swap}^\oplus_{A,B}, (p \oplus q) + 0,  \begin{bmatrix} x \\ y
\end{bmatrix} + 0 \right) \\
&= \left( (f \oplus g) \circ \mathsf{swap}^\oplus_{A,B}, (p \oplus q) + (f \oplus g) \circ 0 \circ (f \oplus g)^\dagger, \begin{bmatrix} x \\ y
\end{bmatrix} + (f \oplus g) \circ 0 \right) \\
&= \left(f \oplus g, p \oplus q, \begin{bmatrix} s \\ t
\end{bmatrix}\right) \circ \left(\mathsf{swap}^\oplus_{A,B}, 0, 0 \right) = (F \otimes G) \circ \mathsf{swap}_{A,B} 
\end{align*}
Thus $\mathsf{swap}$ is indeed a natural isomorphism and its own inverse. Therefore, we conclude that $\mathfrak{G}\left[ (\mathbb{X}, \dagger) \right]_X$ is indeed a symmetric monoidal category.
\end{proof}

To show that $\mathfrak{G}\left[ (\mathbb{X}, \dagger) \right]_X$ is a Markov category, we also need to give the commutative comonoid structure. To do so, we use the fact that in an additive category, every object admits a canonical and unique cocommutative comonoid structure with respect to the biproduct. For every object $A \in \mathbb{X}$, let $\Delta_A: A \to A \oplus A$ be the canonical diagonal map of the biproduct, which is defined as follows: 
\[ \Delta_A = \begin{bmatrix} \mathsf{id}_A \\ \mathsf{id}_A
 \end{bmatrix} \]
 Then define the map $\mathsf{copy}_A: A \to A \otimes A$ in $\mathfrak{G}\left[ (\mathbb{X}, \dagger) \right]_X$ as the triple:
 \begin{align*}
\mathsf{copy}_A = (\Delta_A, 0, 0) = \left( \begin{bmatrix} \mathsf{id}_A \\ \mathsf{id}_A
 \end{bmatrix}, \begin{bmatrix} 0 & 0  \\ 0 & 0
 \end{bmatrix}, \begin{bmatrix} 0 \\ 0
 \end{bmatrix} \right) 
\end{align*}
Also define the map $\mathsf{del}_A: A \to \mathsf{0}$ in $\mathfrak{G}\left[ (\mathbb{X}, \dagger) \right]_X$ as the triple:
\begin{align*}
\mathsf{del}_A = (0,0,0) 
\end{align*}

\begin{theorem}\label{thm:gauss-markov} $\mathfrak{G}\left[ (\mathbb{X}, \dagger) \right]_X$ is a Markov category. 
\end{theorem}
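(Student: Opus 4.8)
The plan is to reduce the three conditions of Definition~\ref{def:Markov} for $\mathfrak{G}\left[ (\mathbb{X}, \dagger) \right]_X$ to the standard fact that the biproduct $(\oplus, \mathsf{0})$ makes $\mathbb{X}$ into a cartesian monoidal category, in which every object carries a canonical cocommutative comonoid structure given by the diagonal $\Delta_A$ and the terminal map $0 \colon A \to \mathsf{0}$, together with a single direct computation for the naturality of $\mathsf{del}$.

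First I would observe that the assignment $J \colon \mathbb{X} \to \mathfrak{G}\left[ (\mathbb{X}, \dagger) \right]_X$ which is the identity on objects and sends $f \colon A \to B$ to the triple $(f, 0, 0) \colon A \to B$ is an identity-on-objects strict symmetric monoidal functor. The only point to check is that triples with vanishing second and third components are closed under composition and monoidal product: indeed, by the composition rule $(g,0,0) \circ (f,0,0) = (g \circ f,\, 0 + g \circ 0 \circ g^\dagger,\, 0 + g \circ 0) = (g \circ f, 0, 0)$, and similarly $(f,0,0) \otimes (g,0,0) = (f \oplus g, 0, 0)$; identities and the symmetry $\mathsf{swap}_{A,B} = (\mathsf{swap}^\oplus_{A,B},0,0)$ are manifestly of this form, so $J$ preserves $\circ$, $\otimes$, $\mathsf{Id}$, and $\mathsf{swap}$ strictly.

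Then, noting that $\mathsf{copy}_A = J(\Delta_A)$, $\mathsf{del}_A = J(0 \colon A \to \mathsf{0})$, and $\mathsf{swap}_{A,B} = J(\mathsf{swap}^\oplus_{A,B})$, conditions (i) and (ii) of Definition~\ref{def:Markov} follow by applying the strict symmetric monoidal functor $J$ to the corresponding identities in $\mathbb{X}$: each object of $\mathbb{X}$ is a cocommutative comonoid with respect to $(\Delta_A, 0)$, this being the canonical comonoid structure of the cartesian monoidal category $(\mathbb{X}, \oplus, \mathsf{0})$ (see e.g.\ \cite[Lemma 4.8, Prop 4.15]{heunen2019categories}), and these comonoids are coherent with $\oplus$, i.e.\ $\Delta_{A \oplus B} = (\mathsf{id}_A \oplus \mathsf{swap}^\oplus_{A,B} \oplus \mathsf{id}_B) \circ (\Delta_A \oplus \Delta_B)$ and the terminal maps satisfy $0_{A \oplus B} = 0_A \oplus 0_B$. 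For condition (iii), naturality of $\mathsf{del}$ is not transported by $J$ (a general map of $\mathfrak{G}\left[ (\mathbb{X}, \dagger) \right]_X$ is not in the image of $J$), so I would verify it by a direct one-line computation: for any $F = (f,p,x) \colon A \to B$, $\mathsf{del}_B \circ F = (0,0,0) \circ (f,p,x) = (0 \circ f,\, 0 + 0 \circ p \circ 0^\dagger,\, 0 + 0 \circ x) = (0,0,0) = \mathsf{del}_A$.

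The argument is essentially bookkeeping, so I do not anticipate a genuine obstacle; the two points needing care are (a) confirming that the sub-collection of triples of the form $(-,0,0)$ is closed under composition and $\otimes$, so that $J$ is well defined and strict monoidal, and (b) the treatment of the monoidal unit: since $I = \mathsf{0}$ and $\mathsf{0} \oplus \mathsf{0} \cong \mathsf{0}$, $A \oplus \mathsf{0} \cong A$ only via the canonical isomorphisms, the equations of Definition~\ref{def:Markov}(ii) involving $\mathsf{del}_{A \otimes B}$ type-check precisely under the strictness conventions already adopted in the paper (or, equivalently, after replacing $\mathbb{X}$ by an equivalent strict dagger additive category). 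Once these are in place, the three conditions hold as described, and $\mathfrak{G}\left[ (\mathbb{X}, \dagger) \right]_X$ is a Markov category.
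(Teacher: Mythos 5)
Your proof is correct and follows essentially the same route as the paper: the comonoid axioms and their coherence with $\otimes$ are transported from the canonical biproduct comonoid structure $(\Delta_A, 0)$ on $\mathbb{X}$, and naturality of $\mathsf{del}$ is checked by the same one-line computation. The only difference is presentational — you package the transfer as an explicit identity-on-objects strict symmetric monoidal functor $f \mapsto (f,0,0)$, which the paper leaves implicit, and this is a clean way to justify the "it follows that" step.
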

\begin{proof} By construction, both $\mathsf{copy}$ and $\mathsf{del}$ are well-defined maps in $\mathfrak{G}\left[ (\mathbb{X}, \dagger) \right]_X$. Now recall that for every object $A \in \mathbb{X}$, $(A, \Delta_A, 0)$ is a cocommutative comonoid in $\mathbb{X}$. As such, it follows that $(A, \mathsf{copy}_A, \mathsf{del}_A)$ is a cocommutative comonoid in $\mathfrak{G}\left[ (\mathbb{X}, \dagger) \right]_X$. Moreover, since:
\[\Delta_{A \oplus B} = (\mathsf{id}_A \oplus \mathsf{swap}^\oplus_{A,B} \oplus \mathsf{id}_B) \circ (\Delta_A \otimes \Delta_B)\] 
it follows that $\mathsf{copy}$ and $\mathsf{del}$ are coherent with the monoidal structure in $\mathfrak{G}\left[ (\mathbb{X}, \dagger) \right]_X$ as well. Lastly, we clearly have that $\mathsf{del}_B \circ F = \mathsf{del}_A$, so every map in $\mathfrak{G}\left[ (\mathbb{X}, \dagger) \right]_X$ preserves the delete and thus $\mathsf{0}$ is a terminal object in $\mathfrak{G}\left[ (\mathbb{X}, \dagger) \right]_X$. So we conclude that $\mathfrak{G}\left[ (\mathbb{X}, \dagger) \right]_X$ is a indeed a Markov category.  
\end{proof}

It is important to note that while $\mathsf{0}$ is a zero object in $\mathbb{X}$, it is only a terminal object in $\mathfrak{G}\left[ (\mathbb{X}, \dagger) \right]_X$ and not necessarily an initial object. Indeed, for any $\dagger$-positive map $p: B \to B$ and any map $x: X \to B$, we have that $(0, p,x): \mathsf{0} \to B$ is a map in $\mathfrak{G}\left[ (\mathbb{X}, \dagger) \right]_X$. 

We now show that the deterministic maps in the Gauss construction are precisely those whose positive component is zero. 

\begin{lemma} A map $F = (f,p,x)$ in $\mathfrak{G}\left[ (\mathbb{X}, \dagger) \right]_X$ is deterministic if and only if $p=0$. 
\end{lemma}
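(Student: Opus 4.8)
The plan is to directly compute both sides of the defining equation $\mathsf{copy}_B \circ F = (F \otimes F) \circ \mathsf{copy}_A$ for a map $F = (f,p,x) : A \to B$ and read off exactly when they coincide. First I would expand the left-hand side: since $\mathsf{copy}_B = (\Delta_B, 0, 0)$ and composition in $\mathfrak{G}\left[ (\mathbb{X}, \dagger) \right]_X$ is $(g,q,y) \circ (f,p,x) = (g \circ f,\, q + g \circ p \circ g^\dagger,\, y + g \circ x)$, this gives
\[ \mathsf{copy}_B \circ F = \left( \Delta_B \circ f,\ \Delta_B \circ p \circ \Delta_B^\dagger,\ \Delta_B \circ x \right). \]
Using $\Delta_B = \begin{bmatrix} \mathsf{id}_B \\ \mathsf{id}_B \end{bmatrix}$ and hence $\Delta_B^\dagger = \begin{bmatrix} \mathsf{id}_B & \mathsf{id}_B \end{bmatrix}$, the outer two components become $\begin{bmatrix} f \\ f \end{bmatrix}$ and $\begin{bmatrix} x \\ x \end{bmatrix}$, while the middle component is the matrix with a copy of $p$ in every entry, $\begin{bmatrix} p & p \\ p & p \end{bmatrix}$.

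Next I would expand the right-hand side. By the definition of $\otimes$ we have $F \otimes F = (f \oplus f,\ p \oplus p,\ \begin{bmatrix} x \\ x \end{bmatrix})$, and since $\mathsf{copy}_A = (\Delta_A, 0, 0)$, the composition formula yields
\[ (F \otimes F) \circ \mathsf{copy}_A = \left( (f \oplus f) \circ \Delta_A,\ (p \oplus p) + (f \oplus f) \circ 0 \circ (f \oplus f)^\dagger,\ \begin{bmatrix} x \\ x \end{bmatrix} + (f \oplus f) \circ 0 \right). \]
The terms built from the zero map vanish, $(f \oplus f) \circ \Delta_A = \begin{bmatrix} f \\ f \end{bmatrix}$, and $p \oplus p = \begin{bmatrix} p & 0 \\ 0 & p \end{bmatrix}$, so the right-hand side reduces to $\left( \begin{bmatrix} f \\ f \end{bmatrix},\ \begin{bmatrix} p & 0 \\ 0 & p \end{bmatrix},\ \begin{bmatrix} x \\ x \end{bmatrix} \right)$.

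Finally I would compare the two triples componentwise. The first and third components agree on both sides no matter what $F$ is, so $F$ is deterministic if and only if $\begin{bmatrix} p & p \\ p & p \end{bmatrix} = \begin{bmatrix} p & 0 \\ 0 & p \end{bmatrix}$. Since maps in a (dagger) additive category are equal precisely when their matrix representations agree entrywise, this holds if and only if $p = 0$: the off-diagonal entry on the left is literally $p$, so it vanishes iff $p = 0$, and conversely $p = 0$ collapses both matrices to zero. This settles both directions at once. I do not expect any genuine obstacle; the only point needing care is the legitimacy of the entrywise comparison, which is exactly the fact that two maps in an additive category coincide iff their matrix representations do.
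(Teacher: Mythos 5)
Your proof is correct and follows essentially the same route as the paper: expand both sides of the determinism equation using the composition and tensor formulas, observe that the first and third components always agree, and reduce the question to the entrywise comparison $\begin{bmatrix} p & p \\ p & p \end{bmatrix} = \begin{bmatrix} p & 0 \\ 0 & p \end{bmatrix}$, which holds iff $p=0$. The only cosmetic difference is that you compute the matrix forms directly rather than first rewriting via the naturality identities for $\Delta$, which changes nothing of substance.
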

\begin{proof} In an additive category, the following equalities hold: 
\begin{align*}
\Delta_B \circ f =  (f\oplus f) \circ \Delta_A &&  \begin{bmatrix} x \\ y \end{bmatrix} = (x \oplus y) \circ \Delta_X 
\end{align*}
So on the one hand we compute that: 
\begin{align*}
\mathsf{copy}_B \circ F &=~ (\Delta_B, 0, 0) \circ (f,p,x) \\
&=~ (\Delta_B \circ f, 0 + \Delta_B \circ p \circ \Delta^\dagger_B, 0 + \Delta_B \circ x) \\
&=~ ( (f\oplus f) \circ \Delta_A, (p \oplus p) \circ \Delta_B \circ \Delta^\dagger_B, (x \oplus x) \circ \Delta_X) \\
&=~ ( (f\oplus f) \circ \Delta_A, (p \oplus p) \circ \Delta_B \circ \Delta^\dagger_B, (x \oplus x) \circ \Delta_X) 
\end{align*}
While on the other hand we also compute that: 
\begin{align*}
&(F \otimes F) \circ \mathsf{copy}_A =~ \left( (f,p,x) \otimes (f,p,x) \right) \circ (\Delta_A, 0, 0) \\
&=~ \left(f \oplus f, p \oplus p, \begin{bmatrix} x \\ x\end{bmatrix} \right) \circ (\Delta_A, 0, 0)\\
&=~ \left( (f \oplus f) \circ \Delta_A, (p \oplus p) + (f \oplus f) \circ 0 \circ (f \oplus f)^\dagger,  \begin{bmatrix} x \\ x\end{bmatrix} + (f \oplus f) \circ 0 \right) \\
&=~ \left( (f \oplus f) \circ \Delta_A, p \oplus p,  (x \oplus x) \circ \Delta_X \right) 
\end{align*}
So we have the following two equalities: 
\begin{align*}
(F \otimes F) \circ \mathsf{copy}_A &= \left( (f \oplus f) \circ \Delta_A, (p \oplus p),  (x \oplus x) \circ \Delta_X  \right) \\
\mathsf{copy}_B \circ F  &=  ( (f\oplus f) \circ \Delta_A, (p \oplus p) \circ \Delta_B \circ \Delta^\dagger_B, (x \oplus x) \circ \Delta_X)
\end{align*}
So if $p=0$ we clearly have that $(F \otimes F) \circ \mathsf{copy}_A = \mathsf{copy}_B \circ F$. Thus $F = (f, 0, x)$ is deterministic. Conversely, if $F = (f,p,x)$ is deterministic, then $(F \otimes F) \circ \mathsf{copy}_A = \mathsf{copy}_B \circ F$, which in particular implies that $p \oplus p = (p \oplus p) \circ \Delta_B \circ \Delta^\dagger_B$. Now in matrix representation, this gives us the following:
\begin{align*}
\begin{bmatrix} p & 0 \\ 
0 & p
\end{bmatrix} 
&= p \oplus p = (p \oplus p) \circ \Delta_B \circ \Delta^\dagger_B = \begin{bmatrix} p & 0 \\ 
0 & p
\end{bmatrix} \circ \begin{bmatrix} \mathsf{id}_B  \\ 
 \mathsf{id}_B 
\end{bmatrix} \circ \begin{bmatrix} \mathsf{id}_B  \\ 
 \mathsf{id}_B 
\end{bmatrix}^\dagger  \\
&= \begin{bmatrix} p & 0 \\ 
0 & p
\end{bmatrix} \circ \begin{bmatrix} \mathsf{id}_B  \\ 
 \mathsf{id}_B 
\end{bmatrix} \circ \begin{bmatrix} \mathsf{id}^\dagger_B  &  
 \mathsf{id}^\dagger_B 
\end{bmatrix} =  \begin{bmatrix} p & 0 \\ 
0 & p
\end{bmatrix} \circ \begin{bmatrix} \mathsf{id}_B  \\ 
 \mathsf{id}_B 
\end{bmatrix} \circ \begin{bmatrix} \mathsf{id}_B  &  
 \mathsf{id}_B 
\end{bmatrix} \\
&= \begin{bmatrix} p & 0 \\ 
0 & p
\end{bmatrix} \circ \begin{bmatrix} \mathsf{id}_B & \mathsf{id}_B \\ 
\mathsf{id}_B & \mathsf{id}_B
\end{bmatrix} = \begin{bmatrix} p & p \\ 
p & p
\end{bmatrix} 
\end{align*}
So $\begin{bmatrix} p & 0 \\ 
0 & p
\end{bmatrix} = \begin{bmatrix} p & p \\ 
p & p
\end{bmatrix} $, which then implies that $p=0$. 
\end{proof}

Applying the Gauss construction to the category of real matrices recaptures the example of real Gaussian  probability theory. By an isomorphism of Markov categories, we mean an isomorphism in the 2-category of Markov categories in the sense of Fritz \cite[Sec 10]{fritz2020synthetic}, which simply means an isomorphism of categories which preserves the monoidal structure and comonoid structure on the nose. 

\begin{example}\label{ex:Gauss-real-mat} Consider $(\mathsf{MAT}(\mathbb{R}), \mathsf{T})$ and take $X =1$. Then we have an isomorphism of Markov categories $\mathfrak{G}\left[ (\mathsf{MAT}(\mathbb{R}), \mathsf{T}) \right]_1 \cong \mathsf{Gauss}$. 
\end{example}

In the above example, we chose  $X=1$  as the base object in Gauss construction; however, it is natural to ask what happens when we take our base object to be some other $n \in \mathbb{N}$. There are really two cases, when $n=0$ and when $n \geq 1$. Indeed, note that $n=0$ is the zero object; on the other hand, when $n \geq 1$, it is the biproduct of $n$ copies of $1$, $n = 1 \oplus  \hdots \oplus 1$. So we will now explain what we get when we take the Gauss construction with base object the zero object or a biproduct of objects. 

First we consider the case of the zero object $\mathsf{0}$. Since $\mathsf{0}$ is an initial object, there is only map out of $\mathsf{0}$, namely the zero map $0: \mathsf{0} \to B$. Therefore, every map in $\mathfrak{G}\left[ (\mathbb{X}, \dagger) \right]_{\mathsf{0}}$ is a triple of the form $F = (f, p, 0)$. As such, in this case, we can consider our maps to simply be pairs instead of triples. So define $\mathfrak{P}\left[ (\mathbb{X}, \dagger) \right]$ as the category whose objects are the same as the objects of $\mathbb{X}$ and where a map $F: A \to B$ in $\mathfrak{P}\left[ (\mathbb{X}, \dagger) \right]$ is a pair $F=(f, p): A \to B$ consisting of a map $f: A \to B$ and a $\dagger$-positive map $p: B \to B$. Composition, identities, the monoidal structure, and the comonoid structure are defined as in the Gauss construction but ignoring the third component of maps. 

\begin{lemma} $\mathfrak{P}\left[ (\mathbb{X}, \dagger) \right]$ is a Markov category and moreover, we have an isomorphism of Markov categories $\mathfrak{P}\left[ (\mathbb{X}, \dagger) \right] \cong \mathfrak{G}\left[ (\mathbb{X}, \dagger) \right]_{\mathsf{0}}$. 
\end{lemma}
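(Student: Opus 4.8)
The plan is to build an explicit isomorphism of plain categories $\Phi \colon \mathfrak{P}\left[ (\mathbb{X}, \dagger) \right] \to \mathfrak{G}\left[ (\mathbb{X}, \dagger) \right]_{\mathsf{0}}$ first, and only afterwards transport the Markov structure along it. The key point, already observed in the paragraph preceding the statement, is that $\mathsf{0}$ is an initial object of $\mathbb{X}$, so for every object $B$ the zero map $0 \colon \mathsf{0} \to B$ is the unique map of that type; hence every morphism of $\mathfrak{G}\left[ (\mathbb{X}, \dagger) \right]_{\mathsf{0}}$ has its third component forced to be $0$ and is thus of the form $(f,p,0)$. Accordingly I define $\Phi$ to be the identity on objects and $\Phi(f,p) = (f,p,0)$ on morphisms.

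First I would check that $\Phi$ is a functor and an isomorphism of categories. Functoriality: $\Phi$ sends $\mathsf{Id}_A = (\mathsf{id}_A, 0)$ to $(\mathsf{id}_A, 0, 0) = \mathsf{Id}_A$; and in $\mathfrak{G}\left[ (\mathbb{X}, \dagger) \right]_{\mathsf{0}}$ one computes $(g,q,0) \circ (f,p,0) = (g \circ f,\, q + g \circ p \circ g^\dagger,\, 0 + g \circ 0) = (g \circ f,\, q + g \circ p \circ g^\dagger,\, 0)$, whose first two components are exactly the composite $(g,q) \circ (f,p)$ of $\mathfrak{P}\left[ (\mathbb{X}, \dagger) \right]$, so $\Phi$ preserves composition. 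It is visibly injective on each hom-set, and surjective because an arbitrary morphism of $\mathfrak{G}\left[ (\mathbb{X}, \dagger) \right]_{\mathsf{0}}$ has the form $(f,p,0) = \Phi(f,p)$. Thus $\Phi$ is an isomorphism of categories.

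Next I would transport the symmetric monoidal and comonoid structure of $\mathfrak{G}\left[ (\mathbb{X}, \dagger) \right]_{\mathsf{0}}$ along $\Phi$. Since the monoidal product, unit, symmetry, copy, and delete of $\mathfrak{P}\left[ (\mathbb{X}, \dagger) \right]$ are by definition those of the Gauss construction with the third component simply dropped, one only needs to check that $\Phi$ matches them with the corresponding data of $\mathfrak{G}\left[ (\mathbb{X}, \dagger) \right]_{\mathsf{0}}$ on the nose: $\Phi(F \otimes G) = \Phi(F) \otimes \Phi(G)$ because the third component of $\Phi(F) \otimes \Phi(G)$ is $\begin{bmatrix} 0 \\ 0 \end{bmatrix} \colon \mathsf{0} \to B \oplus C$, which is again the zero map; $\Phi(\mathsf{copy}_A) = (\Delta_A, 0, 0) = \mathsf{copy}_A$; $\Phi(\mathsf{del}_A) = (0,0,0) = \mathsf{del}_A$; $\Phi(\mathsf{swap}_{A,B}) = \mathsf{swap}_{A,B}$; and the monoidal unit $\mathsf{0}$ is fixed. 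Because $\mathfrak{G}\left[ (\mathbb{X}, \dagger) \right]_{\mathsf{0}}$ is a Markov category by Thm~\ref{thm:gauss-markov}, transporting its structure along the isomorphism $\Phi$ makes $\mathfrak{P}\left[ (\mathbb{X}, \dagger) \right]$ a Markov category, and $\Phi$ is then by construction an isomorphism of Markov categories.

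I do not anticipate a genuine obstacle here; the argument is essentially bookkeeping. The only subtlety worth flagging is the order of operations: one cannot literally speak of an ``isomorphism of Markov categories'' before $\mathfrak{P}\left[ (\mathbb{X}, \dagger) \right]$ has been shown to be a Markov category, which is precisely why it is cleanest to establish the plain categorical isomorphism first and let it carry the Markov structure across, rather than re-verifying all the Markov axioms for $\mathfrak{P}\left[ (\mathbb{X}, \dagger) \right]$ from scratch and then separately checking compatibility with $\Phi$.
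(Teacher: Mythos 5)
Your proposal is correct and follows exactly the route the paper intends: the paper states this lemma without a formal proof, relying on the preceding observation that initiality of $\mathsf{0}$ forces every third component to be the zero map, which is precisely the fact your functor $\Phi(f,p)=(f,p,0)$ exploits. The remaining verifications (functoriality, bijectivity on hom-sets, and strict preservation of $\otimes$, $\mathsf{swap}$, $\mathsf{copy}$, and $\mathsf{del}$) are the routine bookkeeping you describe, and transporting the Markov structure of $\mathfrak{G}\left[ (\mathbb{X}, \dagger) \right]_{\mathsf{0}}$ along the isomorphism is a legitimate way to conclude.
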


\begin{example}\label{ex:Gauss-real-mat-0} Recall that the zero object in $\mathsf{MAT}(\mathbb{R})$ is $n=0$. Then $\mathfrak{P}\left[ (\mathsf{MAT}(\mathbb{R}), \mathsf{T}) \right] \cong \mathfrak{G}\left[ (\mathsf{MAT}(\mathbb{R}), \mathsf{T}) \right]_0$ captures \textit{centered} Gaussian conditional distributions where the expected value of the Gaussian noise is fixed to be \(0\), so in other words, \textit{centred} Gaussian conditional distributions. Indeed, define $\mathsf{Gauss}_0$ to be the category whose objects are the natural numbers $n \in \mathbb{N}$ and where a map is a pair $(M,C): n \to m$ consisting of an $m \times n$ $\mathbb{R}$-matrix $M$ and a positive semidefinite square $m \times m$ $\mathbb{R}$-matrix $C$. We interpret this pair as a centered Gaussian conditional distribution of a random variable $Y = MX + \xi$, where $\xi \in \mathbb{R}^m$ is Gaussian noise independent from $X$, with expected value $\mathsf{E}[\xi] = 0$. Then $\mathfrak{P}\left[ (\mathsf{MAT}(\mathbb{R}), \mathsf{T}) \right] \cong \mathsf{Gauss}_0$. 
\end{example}

Now consider what happens when we take the base object to be a biproduct $X_1 \oplus \hdots \oplus X_n$. By the couniversal property of the coproduct, a map $X_1 \oplus \hdots \oplus X_N \to B$ is fully determined by $n$ maps $X_j \to B$. Therefore, we can instead interpret maps in $\mathfrak{G}\left[ (\mathbb{X}, \dagger) \right]_{X_1 \oplus \hdots \oplus X_n}$ as $2+n$-tuples. So define $\mathfrak{G}\left[ (\mathbb{X}, \dagger) \right]_{X_1, \hdots, X_n}$ to be the category whose objects are the same as the objects of $\mathbb{X}$ and where a map $F: A \to B$ in $\mathfrak{G}\left[ (\mathbb{X}, \dagger) \right]_{X_1, \hdots, X_n}$ is a $2+n$-tuple $F=(f, p, x_1, \hdots, x_n): A \to B$ consisting of a map $f: A \to B$, a $\dagger$-positive map $p: B \to B$, and $n$ maps $x_j: X_j \to B$ $(1 \leq j \leq n)$. Composition, identities, the monoidal structure, and the comonoid structure are defined as in the Gauss construction in the first two arguments of maps and as in the third argument of maps for the remaining $n$ arguments. 

\begin{lemma} $\mathfrak{G}\left[ (\mathbb{X}, \dagger) \right]_{X_1, \hdots, X_n}$ is a Markov category and we have an isomorphism of Markov categories $\mathfrak{G}\left[ (\mathbb{X}, \dagger) \right]_{X_1, \hdots, X_n} \cong \mathfrak{G}\left[ (\mathbb{X}, \dagger) \right]_{X_1 \oplus \hdots \oplus X_n}$. 
\end{lemma}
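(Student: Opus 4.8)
The plan is to establish the isomorphism $\mathfrak{G}\left[ (\mathbb{X}, \dagger) \right]_{X_1, \hdots, X_n} \cong \mathfrak{G}\left[ (\mathbb{X}, \dagger) \right]_{X_1 \oplus \hdots \oplus X_n}$ by exhibiting an explicit functor that is the identity on objects, is bijective on morphisms, and strictly preserves the symmetric monoidal and comonoid structures; since both sides are already known to be Markov categories from Thm~\ref{thm:gauss-markov}, the ``is a Markov category'' part requires no separate argument. The functor sends a tuple $F = (f, p, x_1, \hdots, x_n): A \to B$ to the triple $(f, p, \llbracket x_1, \hdots, x_n \rrbracket): A \to B$, where $\llbracket x_1, \hdots, x_n \rrbracket : X_1 \oplus \hdots \oplus X_n \to B$ denotes the cotuple (copairing) of the $x_j$ along the biproduct injections, that is, the unique map with $\llbracket x_1, \hdots, x_n \rrbracket \circ \iota_j = x_j$; concretely it is the row matrix $\begin{bmatrix} x_1 & \hdots & x_n \end{bmatrix}$. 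The couniversal property of the coproduct (which a biproduct enjoys) gives immediately that this assignment is a bijection on each homset, with inverse $x \mapsto (x \circ \iota_1, \hdots, x \circ \iota_n)$, and the first two components are left untouched so no positivity or well-definedness issues arise.

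First I would check functoriality: identities are sent to identities since the third components are all $0$ and the cotuple of zeros is $0$. For composition, the only nontrivial point is that the third component of $G \circ F$ in $\mathfrak{G}\left[ (\mathbb{X}, \dagger) \right]_{X_1, \hdots, X_n}$ is the tuple $(y_1 + g \circ x_1, \hdots, y_n + g \circ x_n)$, and its cotuple equals $\llbracket y_1, \hdots, y_n \rrbracket + g \circ \llbracket x_1, \hdots, x_n \rrbracket$; this follows because cotupling is additive in each slot and because $g \circ \llbracket x_1, \hdots, x_n \rrbracket = \llbracket g \circ x_1, \hdots, g \circ x_n \rrbracket$, which is just the couniversal property applied to $g \circ (\_)$. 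Hence the image of $G \circ F$ matches the composite of the images in $\mathfrak{G}\left[ (\mathbb{X}, \dagger) \right]_{X_1 \oplus \hdots \oplus X_n}$.

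Next I would verify preservation of the monoidal product. On objects both sides use $\oplus$, so nothing to check. On morphisms, the third component of $F \otimes F'$ on the tuple side is the $2n$-tuple obtained by juxtaposing the $x_j$ and $x'_j$, while on the triple side it is the column $\begin{bmatrix} \llbracket x_1, \hdots, x_n \rrbracket \\ \llbracket x'_1, \hdots, x'_n \rrbracket \end{bmatrix}$; one checks these correspond under the (canonical, coherence-governed) identification of $(X_1 \oplus \hdots \oplus X_n) \oplus (X_1 \oplus \hdots \oplus X_n)$ with the biproduct of the $2n$ summands, using only associativity of $\oplus$ and the compatibility of injections. The monoidal unit, swap, copy, and delete all have third components equal to $0$, so they are preserved trivially. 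I expect the main obstacle to be purely bookkeeping: keeping the indexing of the $2n$-fold biproduct straight and being careful that the associativity/coherence isomorphisms of the biproduct monoidal structure are literally identities (or are the same on both sides), so that ``strict'' preservation genuinely holds rather than preservation only up to canonical iso. Once the cotupling dictionary $\llbracket x_1, \hdots, x_n \rrbracket \leftrightarrow (x_1, \hdots, x_n)$ is set up and its additivity and naturality in the codomain are recorded, every remaining verification reduces to these two facts plus the definitions already given for the Gauss construction.
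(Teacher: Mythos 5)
Your overall strategy --- the cotupling bijection $(x_1,\hdots,x_n) \leftrightarrow \llbracket x_1,\hdots,x_n\rrbracket$ furnished by the couniversal property of the biproduct --- is exactly the idea the paper relies on (the paper states this lemma without proof, having just remarked that a map $X_1\oplus\hdots\oplus X_n \to B$ is fully determined by $n$ maps $X_j \to B$), and your treatment of composition via additivity of cotupling and $g\circ\llbracket x_1,\hdots,x_n\rrbracket = \llbracket g\circ x_1,\hdots,g\circ x_n\rrbracket$ is correct. However, there are two concrete slips. First, your monoidal step misreads the definition of $\otimes$ on the tuple side: the third components of $F\otimes F'$ there are \emph{not} a $2n$-tuple but still an $n$-tuple, whose $j$-th entry is the column $\left[\begin{smallmatrix} x_j \\ x'_j\end{smallmatrix}\right] : X_j \to B\oplus D$ (the paper says the remaining $n$ arguments are each treated ``as in the third argument of maps'' of the Gauss construction, where the domain of the extra component stays fixed and only the codomain doubles). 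Consequently there is no $2n$-fold biproduct and no associativity bookkeeping to do; the correct check is simply that $\left[\begin{smallmatrix} \llbracket x_1,\hdots,x_n\rrbracket \\ \llbracket x'_1,\hdots,x'_n\rrbracket\end{smallmatrix}\right]\circ\iota_j = \left[\begin{smallmatrix} x_j \\ x'_j\end{smallmatrix}\right]$, so the two sides agree by the couniversal property. Your proposed identification of $(X_1\oplus\hdots\oplus X_n)\oplus(X_1\oplus\hdots\oplus X_n)$ with a $2n$-fold biproduct does not even typecheck against the third component's domain, which remains $X_1\oplus\hdots\oplus X_n$ throughout.

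Second, your opening claim that ``both sides are already known to be Markov categories from Thm~\ref{thm:gauss-markov}'' is circular for the left-hand side: $\mathfrak{G}\left[ (\mathbb{X}, \dagger) \right]_{X_1, \hdots, X_n}$ is a freshly defined category of $(2+n)$-tuples, not an instance of the Gauss construction, so the assertion that it is a Markov category is part of what the lemma claims. The repair is cheap and already implicit in your verifications: establish that the cotupling assignment is a bijective-on-objects, fully faithful functor carrying each piece of the defined structure (composition, identities, $\otimes$, $\mathsf{swap}$, $\mathsf{copy}$, $\mathsf{del}$) to the corresponding piece on the $X_1\oplus\hdots\oplus X_n$ side, and then transport the Markov axioms along this isomorphism. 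With the monoidal step corrected as above, your argument goes through.
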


\begin{example}\label{ex:Gauss-real-mat-1}  For $k \geq 1$, which is a biproduct $k = 1 \oplus \hdots \oplus 1$ in $\mathsf{MAT}(\mathbb{R})$, we have that $\mathfrak{G}\left[ (\mathsf{MAT}(\mathbb{R}), \mathsf{T}) \right]_k \cong \mathfrak{G}\left[ (\mathsf{MAT}(\mathbb{R}), \mathsf{T}) \right]_{1, \hdots, 1}$ which we can interpret as $k$ Gaussian conditional distributions which differ only by their Gaussian noises who have the same covariance, but possibly different expected values. Indeed, define $\mathsf{Gauss}_k$ to be the category whose objects are the natural numbers $n \in \mathbb{N}$ and where a map is an $k$-tuple $(M,C, s_1, \hdots, s_k): n \to m$ consisting of an $m \times n$ $\mathbb{R}$-matrix $M$, a positive semidefinite square $m \times m$ $\mathbb{R}$-matrix $C$, and $k$ (column) vector $s_1, \hdots, s_k \in \mathbb{R}^m$. We interpret this tuple as Gaussian distributions of random variables $Y_1 = MX_1 + \xi_1$, ..., $Y_k = MX_k + \xi_k$, where each $\xi_j \in \mathbb{R}^m$ is Gaussian noise independent from $X_j$ with $\mathsf{E}[\xi_j] = s_j$ but which all have the same covariance matrix $\mathsf{VAR}[\xi_1] = \hdots = \mathsf{VAR}[\xi_k] = C$. Then we have that $\mathfrak{G}\left[ (\mathsf{MAT}(\mathbb{R}), \mathsf{T}) \right]_k \cong \mathsf{Gauss}_k$. 
\end{example}

 Of course, the main purpose for introducing the Gauss construction is to develop new examples of Markov categories. So let us now apply the Gauss construction to complex matrices and explain how this corresponds to \emph{proper} complex Gaussian conditional distributions. 
 
\begin{example}\label{ex:GaussC}
Define \(\mathsf{Gauss}_p^{\mathbb C} \coloneqq \mathfrak{G}\left[ (\mathsf{MAT}(\mathbb{C}), \ast) \right]_1\).
 By  Thm~\ref{thm:gauss-markov}, \(\mathsf{Gauss}_p^{\mathbb C}\) is a Markov category.  Just as the maps in $\mathsf{Gauss}$ were interpreted as real Gaussian distributions, we claim that maps in $\mathsf{Gauss}_p^\mathbb{C}$ correspond to certain \emph{complex Gaussian distributions}. Indeed, recall that a complex Gaussian random variable $\zeta$ is determined by its expectation $\mathsf{E}[\zeta]$, its covariance and \emph{pseudocovariance}
 \[
 \mathsf{VAR}(\zeta)
\coloneqq \mathsf{E}[(\zeta - E[\zeta])(\zeta - E[\zeta])^{\ast}],
\quad 
\mathsf{PVAR}(\zeta) 
\coloneqq \mathsf{E}[(\zeta - E[\zeta])(\zeta - E[\zeta])^{\mathsf T}].
\]
 A complex Gaussian distribution $\zeta$ is \textbf{proper}~\cite[Def 2.1]{Schreier2010} when its pseudocovariance is zero, $\mathsf{PVAR}(\zeta)=0$. Therefore, arbitrary morphisms $(M,C,s): n \to m$ in $\mathsf{Gauss}_p^\mathbb{C}$ are interpreted as proper complex Gaussian conditional distributions $Y = M X + \zeta$ where $\zeta \in \mathbb{C}^m$ is a complex Gaussian random variable with $\mathsf{E}[\zeta]=s$, $\mathsf{VAR}(\zeta)=C$, and $\mathsf{PVAR}(\zeta)=0$. When moreover, the variance is zero, then such a proper complex Gaussian distribution is said to be \textbf{circularly symmetric}~\cite[Result 2.11]{Schreier2010}, so that it is invariant under the action of the complex unit circle. As such the morphisms in \(\mathfrak{P}\left[ (\mathsf{MAT}(\mathbb{C}), \ast) \right]\) are precisely the circularly symmetric complex Gaussian conditional distributions.
\end{example}

We now also apply the Gauss construction to quaternionic matrices, which will result in certain \emph{quaternionic Gaussian distributions}. 

\begin{example}\label{ex:GaussH} Define \( \mathsf{Gauss}_p^{\mathbb H}\coloneqq \mathfrak{G}\left[ (\mathsf{MAT}(\mathbb{H}), \ast) \right]_1\).
Just as the maps in $\mathsf{Gauss}_p^{\mathbb{C}}$ were interpreted as proper complex Gaussian distributions, we claim that maps in $\mathsf{Gauss}_p^\mathbb{H} $ correspond to the quaternionic analogue. This time, a quaternionic Gaussian distribution is determined  by its expectation, its covariance, and its three \emph{complementary covariances} \cite[Sec~III.A.]{Via2010}, which are quaternionic analogues of the complex pseudocovariance. So given a quaternion random variable \(\zeta\in \mathbb{H}^m\), with expectation \(\mathsf{E}[\zeta]\), its covariance and complementary covariances, for all \(\eta\in\{\mathbf{i},\mathbf{j},\mathbf{k}\}\), are given by:
\[
\mathsf{VAR}(\zeta)
\coloneqq \mathsf{E}\bigl[(\zeta-\mathsf{E}[\zeta])(\zeta-\mathsf{E}[\zeta])^{*}\bigr],
\;
\mathsf{PVAR}_\eta(\zeta) \coloneqq \mathsf{E}\bigl[(\zeta-\mathsf{E}[\zeta])\bigl(\bigl(\zeta-\mathsf{E}[\zeta]\bigr)^{(\eta)}\bigr)^{*}\bigr],
\]
where given a quaternionic matrix \(A\),  \(A^{(\eta)}\) is the quaternionic matrix given by  linearly extending the conjugation \((-)^{(\eta)}: \mathbb{H}\to \mathbb{H}\) given by \(q \mapsto -\eta q \eta\). A quaternionic Gaussian distribution is called \textbf{proper}\footnote{We note that in almost all of the signal processing literature, where quaternionic Gaussian distributions seem to be most commonly used, the expectation of proper quaternionic Gaussian distributions is assumed to be zero for convenience. However, as mentioned in~\cite[Footnote~2]{Via2010}, the definition of proper quaternionic Gaussian distribution with nontrivial expectation which we have given above follows easily.} when all three complementary covariances vanish \cite[Sec~III.D]{Via2010}, so that  \(\mathsf{PVAR}_{\mathbf{i}}(\zeta)=\mathsf{PVAR}_{\mathbf{j}}(\zeta)=\mathsf{PVAR}_{\mathbf{k}}(\zeta)=0\). Thus a general morphism \((M,C,s):n\to m\) in \(\mathsf{Gauss}_p^{\mathbb H}\) can be interpreted as a proper quaternionic Gaussian conditional distribution \(Y = MX + \zeta\), where \(\zeta\in\mathbb{H}^m\) is a  quaternionic Gaussian random variable with \(\mathsf{E}[\zeta]=s\), \(\mathsf{VAR}(\zeta)=C\), and \(\mathsf{PVAR}_\eta(\zeta)=0\) for all \(\eta\in\{\mathbf{i},\mathbf{j},\mathbf{k}\}\). When moreover, the variance is zero, then such a proper quaternion  Gaussian distribution is invariant under a canonical action of the quaternion unit sphere \cite[Lemma 9]{Via2010}. As such the morphisms in \(\mathfrak{P}\left[ (\mathsf{MAT}(\mathbb{Q}), \ast) \right]\) are precisely the \emph{spherically symmetric} quaternionic Gaussian conditional distributions. 
\end{example}

 More generally, by applying the Gauss construction to matrices over an (involutive) ring $R$ results in a Markov category one can then investigate whether this recaptures some already established theory of Gaussian distributions over $R$ or perhaps instead opens the door to consider such. Of course, not every dagger additive category is a category of matrices over a ring, and so it would also be interesting to study what frameworks for probability one obtains when applying the Gauss construction to non-matrix category examples, such as the category of Hilbert spaces. 

We conclude this section with a brief discussion about the functoriality of the Gauss construction. For dagger additive categories $(\mathbb{X}, \dagger)$ and $(\mathbb{Y}, \dagger)$, a \textit{dagger additive functor} between them is functor $\mathcal{F}: \mathbb{X} \to \mathbb{Y}$ which is a \textbf{dagger functor} \cite[Def 2.3]{heunen2016monads}, that is, it preserves the dagger $\mathcal{F}(-^\dagger) = \mathcal{F}(-)^\dagger$, and is an \textbf{additive functor} \cite[Prop 1.3.4]{borceux1994handbook}, which means it preserves finite biproducts (up to canonical isomorphism) \cite[Def 2.22]{heunen2019categories} or equivalently that it preserves the group structure of the homsets \cite[Def 1.3.1]{borceux1994handbook}. On the other hand, for Markov categories $\mathbb{C}$ and $\mathbb{D}$, a \textit{Markov functor} \cite[10.14]{fritz2020synthetic} is a \textbf{strong symmetric monoidal functor} \cite[Def 1.29]{heunen2019categories} which preserves the comonoid structure (up to the monoidal isomorphisms of the monoidal functor). Now for a dagger additive functor $\mathcal{F}: (\mathbb{X}, \dagger) \to (\mathbb{Y}, \dagger)$, for any object $X \in \mathbb{X}$, define the functor $\mathfrak{G}[\mathcal{F}]_X: \mathfrak{G}\left[ (\mathbb{X}, \dagger) \right]_{X} \to \mathfrak{G}\left[ (\mathbb{Y}, \dagger) \right]_{\mathcal{F}(X)}$ on objects as $\mathfrak{G}[\mathcal{F}]_X(A) = \mathcal{F}(A)$ and on maps as $\mathfrak{G}[\mathcal{F}]_X(f,p,x) = \left( \mathcal{F}(f), \mathcal{F}(p), \mathcal{F}(x) \right)$. 

\begin{lemma} If $\mathcal{F}: (\mathbb{X}, \dagger) \to (\mathbb{Y}, \dagger)$ is a dagger additive functor, then for any object $X \in \mathbb{X}$, $\mathfrak{G}[\mathcal{F}]_X: \mathfrak{G}\left[ (\mathbb{X}, \dagger) \right]_{X} \to \mathfrak{G}\left[ (\mathbb{X}, \dagger) \right]_{\mathcal{F}(X)}$ is a Markov functor. 
\end{lemma}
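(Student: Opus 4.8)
The plan is to verify the three structural requirements in the definition of a Markov functor: that $\mathfrak{G}[\mathcal{F}]_X$ is a well-defined functor, that it is strong symmetric monoidal, and that it preserves the comonoid structure. The key observation that makes all of this routine is that every piece of structure in $\mathfrak{G}\left[(\mathbb{X},\dagger)\right]_X$ — composition, identities, the monoidal product, the symmetry, $\mathsf{copy}$, and $\mathsf{del}$ — is built \emph{componentwise} out of the operations $\circ$, $\mathsf{id}$, $\oplus$, $+$, $0$, $\dagger$, $\Delta$, and $\mathsf{swap}^\oplus$ of the base dagger additive category $(\mathbb{X},\dagger)$. Since $\mathcal{F}$ is a dagger additive functor, it preserves every one of these operations (the dagger by the dagger functor condition, composition and identities by functoriality, the biproduct $\oplus$, the zero maps $0$, and the sums $+$ by additivity, and consequently the canonical maps $\Delta_A$ and $\mathsf{swap}^\oplus_{A,B}$, which are defined purely in terms of $\oplus$, $\mathsf{id}$, and $0$). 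So each equation we must check reduces, after applying $\mathcal{F}$, to an instance of an equation $\mathcal{F}$ already respects.

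First I would check that $\mathfrak{G}[\mathcal{F}]_X$ lands in $\mathfrak{G}\left[(\mathbb{Y},\dagger)\right]_{\mathcal{F}(X)}$: given $F=(f,p,x): A\to B$ with $p=\phi^\dagger\circ\phi$, we have $\mathcal{F}(p)=\mathcal{F}(\phi^\dagger\circ\phi)=\mathcal{F}(\phi)^\dagger\circ\mathcal{F}(\phi)$, so $\mathcal{F}(p)$ is $\dagger$-positive, and $\mathcal{F}(x): \mathcal{F}(X)\to\mathcal{F}(B)$ has the correct type. Next, functoriality: $\mathfrak{G}[\mathcal{F}]_X(\mathsf{Id}_A) = (\mathcal{F}(\mathsf{id}_A),\mathcal{F}(0),\mathcal{F}(0)) = (\mathsf{id}_{\mathcal{F}(A)},0,0) = \mathsf{Id}_{\mathcal{F}(A)}$, and for $G\circ F = (g\circ f,\, q + g\circ p\circ g^\dagger,\, y + g\circ x)$ one applies $\mathcal{F}$ to each component and uses preservation of $\circ$, $+$, and $\dagger$ to match $\mathfrak{G}[\mathcal{F}]_X(G)\circ\mathfrak{G}[\mathcal{F}]_X(F)$. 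For the monoidal structure, I would take the structure isomorphisms of the functor to be identities — that is, $\mathfrak{G}[\mathcal{F}]_X(A\otimes B) = \mathcal{F}(A\oplus B)$ and $\mathfrak{G}[\mathcal{F}]_X(A)\otimes \mathfrak{G}[\mathcal{F}]_X(B) = \mathcal{F}(A)\oplus\mathcal{F}(B)$, which agree up to the canonical comparison isomorphism $\mathcal{F}(A\oplus B)\cong\mathcal{F}(A)\oplus\mathcal{F}(B)$ coming from additivity; one then checks this comparison is compatible with $\otimes$ of triples, with $\mathsf{swap}$, with the unit ($\mathcal{F}$ preserves the zero object), and is a monomial of Markov/monoidal coherence cells. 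Preservation of $\mathsf{copy}$ and $\mathsf{del}$ is then immediate: $\mathfrak{G}[\mathcal{F}]_X(\mathsf{copy}_A) = (\mathcal{F}(\Delta_A),0,0) = (\Delta_{\mathcal{F}(A)},0,0) = \mathsf{copy}_{\mathcal{F}(A)}$ up to the comparison iso, using that $\mathcal{F}(\Delta_A)$ is the diagonal of $\mathcal{F}(A\oplus A)\cong\mathcal{F}(A)\oplus\mathcal{F}(A)$, and similarly $\mathfrak{G}[\mathcal{F}]_X(\mathsf{del}_A) = (0,0,0) = \mathsf{del}_{\mathcal{F}(A)}$.

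The only genuinely delicate point — and the one I would treat most carefully — is the bookkeeping around the biproduct comparison isomorphisms: an additive functor preserves biproducts only up to canonical isomorphism, so $\mathfrak{G}[\mathcal{F}]_X$ is strong monoidal rather than strict, and one must confirm that the canonical iso $\mathcal{F}(A\oplus B)\xrightarrow{\sim}\mathcal{F}(A)\oplus\mathcal{F}(B)$ (assembled from $\mathcal{F}(\pi_i)$ and $\mathcal{F}(\iota_j)$, which are adjoint since $\mathcal{F}$ is a dagger functor) is natural in $A$ and $B$, satisfies the hexagon/associativity coherence, is compatible with $\mathsf{swap}$, and intertwines $\mathcal{F}(\Delta_A)$ with $\Delta_{\mathcal{F}(A)}$ and $\mathcal{F}$ of the zero-object structure maps with the unit coherence cells. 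All of these are standard facts about additive functors; none requires any new idea beyond the fact, recorded above, that every structural map of the Gauss construction is a componentwise expression in the base-category operations that $\mathcal{F}$ preserves.
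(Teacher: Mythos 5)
Your proposal is correct; the paper actually states this lemma without proof (treating it as routine), and your argument supplies exactly the verification the authors intend: every structural datum of the Gauss construction is a componentwise expression in $\circ$, $+$, $0$, $\oplus$, and $\dagger$, all of which a dagger additive functor preserves, with the only genuine care point being the strong (not strict) monoidal structure coming from the canonical biproduct comparison isomorphisms $\mathcal{F}(A\oplus B)\cong\mathcal{F}(A)\oplus\mathcal{F}(B)$, lifted to deterministic triples $(\theta,0,0)$. The one blemish is the garbled phrase ``is a monomial of Markov/monoidal coherence cells,'' but the surrounding text makes clear you mean the standard coherence and naturality checks for these comparison maps, so there is no substantive gap.
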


It is straightforward to see that the Gauss construction then gives a \mbox{(2-)}functor from the (2-)category of dagger additive categories with a chosen object to the (2-)category of Markov categories. Now if we wish to do away from having to choose an object, since dagger additive functors preserve zero objects (up to isomorphism) and zero maps, it follows that if $\mathcal{F}: (\mathbb{X}, \dagger) \to (\mathbb{Y}, \dagger)$ is a dagger additive functor, then $\mathfrak{P}[\mathcal{F}]: \mathfrak{P}\left[ (\mathbb{X}, \dagger) \right] \to \mathfrak{P}\left[ (\mathbb{Y}, \dagger) \right]$ is a Markov functor, which is defined on objects as $\mathfrak{P}[\mathcal{F}](A) = \mathcal{F}(A)$ and on maps as $\mathfrak{P}[\mathcal{F}](f,p) = \left( \mathcal{F}(f), \mathcal{F}(p) \right)$. Therefore, the $\mathfrak{P}$ construction gives a (2-)functor from the actual (2-)category of dagger additive categories to the (2-)category of Markov categories.

\section{Conditionals in the Gauss Construction}\label{sec:cond-in-gauss}

In this section, we characterize all possible conditionals in the Gauss construction and show that they are fully determined by a map in the base category which is compatible with the positive component of maps in the Gauss construction. We call such a map a \textit{conditional generator}, and we will show that conditionals in the Gauss construction are in bijective correspondence with conditional generators in the base category. 

Throughout this section, let $(\mathbb{X}, \dagger)$ be a dagger additive category and fix an object $X \in \mathbb{X}$. Consider first a map in $\mathfrak{G}\left[ (\mathbb{X}, \dagger) \right]_X$ for which we may take its conditional, that is, a map of type $F: A \to B \otimes C$ in $\mathfrak{G}\left[ (\mathbb{X}, \dagger) \right]_X$. Then $F$ is a triple consisting of a map of type $A \to B \oplus C$, a $\dagger$-positive map $B \oplus C \to B\oplus C$, and a generalized point $X \to B \oplus C$. Using matrix representation, $F$ is thus a triple consisting of a $2 \times 1$ matrix, $2 \times 2$ matrix, and a $2 \times 1$ matrix whose coefficients are maps of the following type: 
 \begin{gather*}
\begin{array}[c]{c}F= \left( \begin{bmatrix} f: A \to B \\ g: A \to C
 \end{bmatrix}, \begin{bmatrix} \alpha: B \to B & \beta: C \to B \\
\gamma: B \to C & \delta: C \to C
\end{bmatrix}, \begin{bmatrix} s: X \to B \\ t: X \to C
 \end{bmatrix}  \right)
\end{array} 
 \end{gather*} 
 Moreover, since the second component   is $\dagger$-positive, there exists a map type $\begin{bmatrix} \phi & \psi \end{bmatrix}: B \oplus C \to D$, together with maps $\phi: B \to D$ and $\psi: C \to D$, such that: 
 \[ \begin{bmatrix} \alpha & \beta \\
\gamma & \delta 
\end{bmatrix} = \begin{bmatrix} \phi & \psi \end{bmatrix}^\dagger \circ \begin{bmatrix} \phi & \psi \end{bmatrix} = \begin{bmatrix} \phi^\dagger \\ \psi^\dagger \end{bmatrix} \circ \begin{bmatrix} \phi & \psi \end{bmatrix} = \begin{bmatrix} \phi^\dagger \circ \phi & \phi^\dagger \circ \psi \\
\psi^\dagger \circ \phi & \psi^\dagger \circ \psi 
\end{bmatrix}  \]
This implies that $\alpha$ and $\delta$ are both $\dagger$-positive maps, and also that $\gamma = \beta^\dagger$. So our map $F: A \to B \otimes C$ takes the form: 
\[F= \left( \begin{bmatrix} f \\ g
 \end{bmatrix}, \begin{bmatrix} \alpha & \beta \\
\beta^\dagger & \delta 
\end{bmatrix}, \begin{bmatrix} s \\ t
 \end{bmatrix}  \right) \]
On the other hand, a potential conditional of $F$ would be a map of type $G: B \otimes A \to C$ in $\mathfrak{G}\left[ (\mathbb{X}, \dagger) \right]_X$, which is a triple  consisting of a map of type $B \oplus A \to C$, a $\dagger$-positive map $C \to C$, and a generalized point $X \to C$. In matrix representation, $G$ is a triple consisting of a $1 \times 2$ matrix and two $1 \times 1$ matrices who coefficients are:
 \begin{gather*}
\begin{array}[c]{c} G = \left( \begin{bmatrix} m: B \to C & k: A \to C
 \end{bmatrix}, \eta: C \to C, u: X \to C  \right)
\end{array} 
 \end{gather*} 
where $\eta$ is $\dagger$-positive. We now show that if $G$ is a conditional of $F$, then every map in $G$ can be expressed using the data of $F$ and $m$.

\begin{lemma}\label{lem:con-m1} Suppose that $G = \left( \begin{bmatrix} m & k
 \end{bmatrix}, \eta, u  \right)$ is a conditional of $F= \left( \begin{bmatrix} f \\ g
 \end{bmatrix}, \begin{bmatrix} \alpha & \beta \\
\beta^\dagger & \delta 
\end{bmatrix}, \begin{bmatrix} s \\ t
 \end{bmatrix}  \right)$. Then the following equalities hold: 
   \begin{align*}
 k = g -  m \circ f  &\quad & m \circ \alpha = \beta^\dagger &\quad & \eta  = \delta - m \circ \beta  &\quad & u = t - m\circ s 
 \end{align*}
\end{lemma}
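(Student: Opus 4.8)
The plan is to expand the defining equation (\ref{eq:conditional}) for a conditional directly inside $\mathfrak{G}\left[ (\mathbb{X}, \dagger) \right]_X$, using matrix representation, and then read off the four equalities by comparing components. Since a map in $\mathfrak{G}\left[ (\mathbb{X}, \dagger) \right]_X$ is a triple of maps of $\mathbb{X}$, and maps of a dagger additive category are equal precisely when their matrix representations agree, two maps of $\mathfrak{G}\left[ (\mathbb{X}, \dagger) \right]_X$ are equal if and only if their three components agree entrywise. So the whole argument reduces to computing the left-hand side of (\ref{eq:conditional}), with $f := F$ and $f\vert_B := G$, as an explicit triple of matrices, and then setting it equal to $F$.

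I would carry out this computation by composing the five factors of the left-hand side from right to left, at each stage using the composition rule $(g,q,y) \circ (f,p,x) = (g \circ f,\, q + g \circ p \circ g^\dagger,\, y + g \circ x)$ together with the simple block-matrix forms of $\mathsf{copy}_A = (\Delta_A, 0, 0)$, $\mathsf{del}_C = (0,0,0)$, $F \otimes \mathsf{Id}_A$, $\mathsf{Id}_B \otimes \mathsf{del}_C \otimes \mathsf{Id}_A$, $\mathsf{copy}_B \otimes \mathsf{Id}_A$, and $\mathsf{Id}_B \otimes G$. The intermediate maps are all easy block matrices, and because the positive and point components of $\mathsf{copy}$ and $\mathsf{del}$ are zero, no extra ``noise'' survives along the way; the upshot is that the left-hand side of (\ref{eq:conditional}) evaluates to
\[
\left( \begin{bmatrix} f \\ m \circ f + k \end{bmatrix},\; \begin{bmatrix} \alpha & \alpha \circ m^\dagger \\ m \circ \alpha & m \circ \alpha \circ m^\dagger + \eta \end{bmatrix},\; \begin{bmatrix} s \\ m \circ s + u \end{bmatrix} \right) \colon A \to B \oplus C .
\]

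Comparing this triple entrywise with $F$ as written in the statement then gives everything at once. Matching the first components gives $m \circ f + k = g$, that is $k = g - m \circ f$; matching the third components gives $m \circ s + u = t$, that is $u = t - m \circ s$; the $(2,1)$-entry of the positive components gives exactly $m \circ \alpha = \beta^\dagger$; and the $(1,2)$-entry gives $\alpha \circ m^\dagger = \beta$. Substituting this last identity into the $(2,2)$-entry equation $m \circ \alpha \circ m^\dagger + \eta = \delta$ gives $\eta = \delta - m \circ (\alpha \circ m^\dagger) = \delta - m \circ \beta$, as required.

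I do not expect a genuine obstacle here: the proof is just a chain of five block-matrix compositions. The only steps that need care are keeping track of the block structure when $\mathsf{del}_C$ deletes the $C$-summand (so a three-block object collapses to a two-block object), and remembering to use the $(1,2)$-entry identity $\beta = \alpha \circ m^\dagger$ to rewrite $m \circ \alpha \circ m^\dagger$ as $m \circ \beta$ so that $\eta$ emerges in the stated form. Consistency of the $(1,2)$- and $(2,1)$-entries is automatic here, since $\alpha$ is $\dagger$-positive and hence self-adjoint.
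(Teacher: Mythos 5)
Your proposal is correct and follows essentially the same route as the paper: compute the left-hand side of (\ref{eq:conditional}) as an explicit triple by composing the five factors with the composition rule of $\mathfrak{G}\left[ (\mathbb{X}, \dagger) \right]_X$, arrive at the same triple $\left( \begin{bmatrix} f \\ m \circ f + k \end{bmatrix}, \begin{bmatrix} \alpha & \alpha \circ m^\dagger \\ m \circ \alpha & \eta + m \circ \alpha \circ m^\dagger \end{bmatrix}, \begin{bmatrix} s \\ u + m \circ s \end{bmatrix} \right)$, and compare components, using self-adjointness of $\alpha$ to pass from $m \circ \alpha = \beta^\dagger$ to $\alpha \circ m^\dagger = \beta$ and hence to $\eta = \delta - m \circ \beta$. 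No gaps.
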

\begin{proof} Suppose that $G$ is a conditional of $F$, which means that the following equality holds: 
\[ (\mathsf{Id}_B \otimes G) \circ (\mathsf{copy}_B \otimes \mathsf{Id}_A) \circ (\mathsf{Id}_B \otimes \mathsf{del}_C \otimes \mathsf{Id}_A) \circ (F \otimes \mathsf{Id}_A) \circ \mathsf{copy}_A = F \]
Let us first write down each component of the left-hand side as a triple: 
\begin{align*}
\mathsf{Id}_B \otimes G &= (\mathsf{id}_B, 0, 0) \otimes \left( \begin{bmatrix} m & k
 \end{bmatrix}, \eta, u  \right) \\
 &= \left( \begin{bmatrix} \mathsf{id}_B & 0 & 0 \\ 0 & m & k \end{bmatrix}, \begin{bmatrix} 0 & 0 \\ 0 & \eta \end{bmatrix}, \begin{bmatrix} 0 \\ u \end{bmatrix} \right),\\[1ex]
\mathsf{copy}_B \otimes \mathsf{Id}_A &= \left( \begin{bmatrix} \mathsf{id}_B \\ \mathsf{id}_B
 \end{bmatrix}, \begin{bmatrix} 0 & 0  \\ 0 & 0
 \end{bmatrix}, \begin{bmatrix} 0 \\ 0
 \end{bmatrix} \right)  \otimes (\mathsf{id}_A, 0, 0) \\
 &=  \left( \begin{bmatrix} \mathsf{id}_B & 0 \\ \mathsf{id}_B & 0 \\ 0 & \mathsf{id}_A \end{bmatrix}, \begin{bmatrix} 0 & 0 & 0\\ 0 & 0 & 0 \\ 0 & 0 & 0 \end{bmatrix}, \begin{bmatrix} 0\\ 0 \\ 0  \end{bmatrix} \right), \\[1ex]
  \mathsf{Id}_B \otimes \mathsf{del}_C \otimes \mathsf{Id}_A &=  (\mathsf{id}_B, 0, 0) \otimes (0,0,0)  \otimes  (\mathsf{id}_A, 0, 0) \\
&= \left( \begin{bmatrix} \mathsf{id}_B & 0 & 0\\ 0 & 0 & \mathsf{id}_A \end{bmatrix}, \begin{bmatrix} 0 & 0 \\ 0 & 0  \end{bmatrix}, \begin{bmatrix} 0\\ 0   \end{bmatrix} \right), \\[1ex]
F \otimes \mathsf{Id}_A &= \left( \begin{bmatrix} f \\ g
 \end{bmatrix}, \begin{bmatrix} \alpha & \beta \\
\beta^\dagger & \delta 
\end{bmatrix}, \begin{bmatrix} s \\ t
 \end{bmatrix}  \right) \otimes (\mathsf{id}_A, 0, 0) \\
 &= \left( \begin{bmatrix} f & 0 \\ g & 0 \\ 0 & \mathsf{id}_A
 \end{bmatrix}, \begin{bmatrix} \alpha & \beta & 0  \\
\beta^\dagger & \delta & 0 \\
0 & 0 & 0
\end{bmatrix}, \begin{bmatrix} s \\ t \\ 0 
 \end{bmatrix}  \right),\\[1ex]
 \mathsf{copy}_A &= \left( \begin{bmatrix} \mathsf{id}_A \\ \mathsf{id}_A
 \end{bmatrix}, \begin{bmatrix} 0 & 0  \\ 0 & 0
 \end{bmatrix}, \begin{bmatrix} 0 \\ 0
 \end{bmatrix} \right).
 \end{align*}
Let us compose them together part by part, we leave checking the details as an exercise for the reader. So we can first compute that: 
 \begin{align*}
(F \otimes \mathsf{Id}_A) \circ \mathsf{copy}_A &=~ \left( \begin{bmatrix} f & 0 \\ g & 0 \\ 0 & \mathsf{id}_A
 \end{bmatrix}, \begin{bmatrix} \alpha & \beta & 0  \\
\beta^\dagger & \delta & 0 \\
0 & 0 & 0
\end{bmatrix}, \begin{bmatrix} s \\ t \\ 0 
 \end{bmatrix}  \right) \circ \left( \begin{bmatrix} \mathsf{id}_A \\ \mathsf{id}_A
 \end{bmatrix}, \begin{bmatrix} 0 & 0  \\ 0 & 0
 \end{bmatrix}, \begin{bmatrix} 0 \\ 0
 \end{bmatrix} \right)  \\
 &=~  \left( \begin{bmatrix} f \\ g \\ \mathsf{id}_A 
 \end{bmatrix}, \begin{bmatrix} \alpha & \beta & 0  \\
\beta^\dagger & \delta & 0 \\
0 & 0 & 0
\end{bmatrix}, \begin{bmatrix} s \\ t \\ 0 
 \end{bmatrix}  \right)
\end{align*} 
Then we get: 
\begin{align*}
& (\mathsf{Id}_B \otimes \mathsf{del}_C \otimes \mathsf{Id}_A) \circ \left( \begin{bmatrix} f \\ g \\ \mathsf{id}_A 
 \end{bmatrix}, \begin{bmatrix} \alpha & \beta & 0  \\
\beta^\dagger & \delta & 0 \\
0 & 0 & 0
\end{bmatrix}, \begin{bmatrix} s \\ t \\ 0 
 \end{bmatrix}  \right)\\
 &=~ \left( \begin{bmatrix} f \\ \mathsf{id}_A
 \end{bmatrix}, \begin{bmatrix} \alpha & \beta & 0 \\ 0 & 0 & 0 
\end{bmatrix} \circ \begin{bmatrix} \mathsf{id}_B & 0 \\ 0 & 0 \\ 0 & \mathsf{id}_A \end{bmatrix},\begin{bmatrix} s \\ 0 
 \end{bmatrix}  \right) =~ \left( \begin{bmatrix} f \\ \mathsf{id}_A
 \end{bmatrix}, \begin{bmatrix} \alpha & 0 \\ 0 & 0  \end{bmatrix},\begin{bmatrix} s \\ 0 
 \end{bmatrix}  \right) 
\end{align*}
Then we compute:
\begin{align*}
&(\mathsf{copy}_B \otimes \mathsf{Id}_A) \circ \left( \begin{bmatrix} f \\ \mathsf{id}_A
 \end{bmatrix}, \begin{bmatrix} \alpha & 0 \\ 0 & 0  \end{bmatrix},\begin{bmatrix} s \\ 0 
 \end{bmatrix}  \right) \\
 &=~ \left( \begin{bmatrix} \mathsf{id}_B & 0 \\ \mathsf{id}_B & 0 \\ 0 & \mathsf{id}_A \end{bmatrix}, \begin{bmatrix} 0 & 0 & 0\\ 0 & 0 & 0 \\ 0 & 0 & 0 \end{bmatrix}, \begin{bmatrix} 0\\ 0 \\ 0  \end{bmatrix} \right)  \circ \left( \begin{bmatrix} f \\ \mathsf{id}_A
 \end{bmatrix}, \begin{bmatrix} \alpha & 0 \\ 0 & 0  \end{bmatrix},\begin{bmatrix} s \\ 0 
 \end{bmatrix}  \right) \\
&=~ \left( \begin{bmatrix}f \\ f \\ \mathsf{id}_A 
 \end{bmatrix}, \begin{bmatrix} \alpha & \alpha & 0 \\ \alpha & \alpha & 0 \\ 0 & 0 & 0  \end{bmatrix},  \begin{bmatrix} s \\  s \\ 0 
 \end{bmatrix} \right) 
\end{align*}
Lastly we finally get that:  
 \begin{align*}
& (\mathsf{Id}_Z \otimes G) \circ \left( \begin{bmatrix}f \\ f \\ \mathsf{id}_A 
 \end{bmatrix}, \begin{bmatrix} \alpha & \alpha & 0 \\ \alpha & \alpha & 0 \\ 0 & 0 & 0  \end{bmatrix},  \begin{bmatrix} s \\  s \\ 0 
 \end{bmatrix} \right) \\
 &=~ \left( \begin{bmatrix} \mathsf{id}_B & 0 & 0 \\ 0 & m & k \end{bmatrix}, \begin{bmatrix} 0 & 0 \\ 0 & \eta \end{bmatrix}, \begin{bmatrix} 0 \\ u \end{bmatrix} \right) \circ \left( \begin{bmatrix}f \\ f \\ \mathsf{id}_A 
 \end{bmatrix}, \begin{bmatrix} \alpha & \alpha & 0 \\ \alpha & \alpha & 0 \\ 0 & 0 & 0  \end{bmatrix},  \begin{bmatrix} s \\  s \\ 0 
 \end{bmatrix} \right) \\
   &=~ \left( \begin{bmatrix} f \\ m \circ f + k 
 \end{bmatrix}, \begin{bmatrix} \alpha & \alpha \circ m^\dagger \\ m \circ \alpha &  \eta + m \circ \alpha\circ m^\dagger \end{bmatrix},  \begin{bmatrix} s \\ u + m \circ s \end{bmatrix} \right)
\end{align*}  
Thus we end up with: 
\begin{equation}\begin{gathered}\label{longcalc}
(\mathsf{Id}_B \otimes G) \circ (\mathsf{copy}_B \otimes \mathsf{Id}_A) \circ (\mathsf{Id}_B \otimes \mathsf{del}_C \otimes \mathsf{Id}_A) \circ (F \otimes \mathsf{Id}_A) \circ \mathsf{copy}_A\\
   = \left( \begin{bmatrix} f \\ m \circ f + k 
 \end{bmatrix}, \begin{bmatrix} \alpha & \alpha \circ m^\dagger \\ m \circ \alpha &  \eta + m \circ \alpha\circ m^\dagger \end{bmatrix},  \begin{bmatrix} s \\ u + m \circ s \end{bmatrix} \right)
\end{gathered}\end{equation}  
Then since $G$ is a conditional of $F$,  the following equality holds: 
\[\fitline{
\left( \begin{bmatrix} f \\ m \circ f + k 
 \end{bmatrix}, \begin{bmatrix} \alpha & \alpha \circ m^\dagger \\ m \circ \alpha &  \eta + m \circ \alpha\circ m^\dagger \end{bmatrix},  \begin{bmatrix} s \\ u + m \circ s \end{bmatrix} \right)
 = \left( \begin{bmatrix} f \\ g
 \end{bmatrix}, \begin{bmatrix} \alpha & \beta \\
\beta^\dagger & \delta 
\end{bmatrix}, \begin{bmatrix} s \\ t
 \end{bmatrix}  \right)}\]
 In particular this implies that $m \circ f + k = g$, $m\circ \alpha = \beta^\dagger$, $\eta + m \circ \alpha\circ m^\dagger  = \delta$, and $u + m\circ s = t$. Now since $\alpha$ is $\dagger$-positive, we have that $\alpha^\dagger = \alpha$, and so $m\circ \alpha = \beta^\dagger$ implies that $\alpha \circ m^\dagger = \beta$ and thus we also get that $\eta + m \circ \beta = \delta$. Then rearranging by subtracting where appropriate, we get $k = g - m\circ f$, $m\circ \alpha = \beta^\dagger$, $\eta   = \delta - m \circ \alpha\circ m^\dagger$, and $u = t - m\circ s$ as desired. 
\end{proof}

The previous lemma tells us that if $G: B \otimes A \to C$ is a conditional of $F: A \to B \otimes C$, then every component of $F$ and the $B \to C$ component of $G$ fully determine the other components of $G$. Therefore, we may ask what conditions on a map $B \to C$ of the base category do we need to build a conditional of $F$? It turns out, that we need only ask for two simple compatibilities with the $\dagger$-positive component of $F$. We call such a map a \textit{conditional generator} for a $\dagger$-positive map. 

\begin{defi}\label{def:conditional-generator} Given a $\dagger$-positive map $\begin{bmatrix} \alpha & \beta \\
\beta^\dagger & \delta 
\end{bmatrix}: B \oplus C \to B\oplus C$ in $(\mathbb{X}, \dagger)$, a \textbf{conditional generator} for this \dag-positive map is a map $m: B \to C$ in $\mathbb{X}$ such that: 
 \begin{enumerate}[{\em (i)}]
\item $m \circ \alpha = \beta^\dagger$
\item $\delta - m \circ \beta$ is $\dagger$-positive. 
 \end{enumerate}
\end{defi}

\begin{lemma}\label{lemma:m} Let $F= \left( \begin{bmatrix} f \\ g
 \end{bmatrix}, \begin{bmatrix} \alpha & \beta \\
\beta^\dagger & \delta 
\end{bmatrix}, \begin{bmatrix} s \\ t
 \end{bmatrix}  \right): A \to B \otimes C$ be a map in $\mathfrak{G}\left[ (\mathbb{X}, \dagger) \right]_X$, and let $m: B \to C$ be a conditional generator for $\begin{bmatrix} \alpha & \beta \\
\beta^\dagger & \delta 
\end{bmatrix}$. Then define the map $G_m: B \otimes A \to C$ in $\mathfrak{G}\left[ (\mathbb{X}, \dagger) \right]_X$ as the triple: 
 \begin{align}\label{def:cond-m}
 G_m = \left( \begin{bmatrix} m & g -  m \circ f
 \end{bmatrix}, \delta - m \circ \beta, t - m\circ s   \right)
 \end{align}
 Then $G_m$ is a conditional of $F$. 
\end{lemma}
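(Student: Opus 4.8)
The plan is to verify directly that the triple $G_m$ satisfies the conditional equation (\ref{eq:conditional}) for $F$, by exploiting the computation already carried out in the proof of Lemma~\ref{lem:con-m1}. Indeed, equation (\ref{longcalc}) in that proof shows that for \emph{any} triple of the form $G = \left( \begin{bmatrix} m & k \end{bmatrix}, \eta, u \right): B \otimes A \to C$, the left-hand side of the conditional equation for $F$ evaluates to
\[ \left( \begin{bmatrix} f \\ m \circ f + k \end{bmatrix}, \begin{bmatrix} \alpha & \alpha \circ m^\dagger \\ m \circ \alpha & \eta + m \circ \alpha\circ m^\dagger \end{bmatrix}, \begin{bmatrix} s \\ u + m \circ s \end{bmatrix} \right). \]
This computation did not use the hypothesis that $G$ was a conditional — it is a bare computation valid for any $m$, $\eta$, $u$. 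So the strategy is to substitute the specific data $k = g - m\circ f$, $\eta = \delta - m\circ\beta$, $u = t - m\circ s$ from (\ref{def:cond-m}) and check that the result equals $F$ componentwise.

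The first two substitutions are immediate: the $f$-component of the first matrix is $f$ on the nose, and $m\circ f + k = m\circ f + (g - m\circ f) = g$; similarly $u + m\circ s = (t - m\circ s) + m\circ s = t$, matching the third component of $F$. So the first and third components of the triple are handled by pure cancellation. The work is all in the $\dagger$-positive (middle) component, where I must show
\[ \begin{bmatrix} \alpha & \alpha \circ m^\dagger \\ m \circ \alpha & (\delta - m\circ\beta) + m \circ \alpha\circ m^\dagger \end{bmatrix} = \begin{bmatrix} \alpha & \beta \\ \beta^\dagger & \delta \end{bmatrix}. \]
Here I invoke condition (i) of Definition~\ref{def:conditional-generator}, $m\circ\alpha = \beta^\dagger$: the bottom-left entries agree directly, and taking daggers (using that $\alpha^\dagger = \alpha$ since $\alpha$ is $\dagger$-positive) gives $\alpha\circ m^\dagger = \beta$, so the top-right entries agree. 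For the bottom-right, substitute $m\circ\alpha\circ m^\dagger = \beta^\dagger \circ m^\dagger = (m\circ\beta)^\dagger$; but also $\delta - m\circ\beta$ is $\dagger$-positive hence self-adjoint, so $(m\circ\beta)^\dagger = m\circ\beta + ((\delta - m\circ\beta)^\dagger - (\delta-m\circ\beta)) \dots$ — more simply, since $\delta$ and $\delta - m\circ\beta$ are both self-adjoint, $m\circ\beta$ is self-adjoint, so $m\circ\alpha\circ m^\dagger = (m\circ\beta)^\dagger = m\circ\beta$, and therefore $(\delta - m\circ\beta) + m\circ\alpha\circ m^\dagger = (\delta - m\circ\beta) + m\circ\beta = \delta$, as needed.

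Finally, there is a well-definedness obligation that must not be skipped: I need to confirm that $G_m$ is genuinely a morphism of $\mathfrak{G}\left[ (\mathbb{X}, \dagger) \right]_X$, i.e.\ that its $\dagger$-positive slot really is $\dagger$-positive. This is exactly condition (ii) of Definition~\ref{def:conditional-generator}, which demands that $\delta - m\circ\beta$ be $\dagger$-positive. So the two defining properties of a conditional generator play distinct roles: (i) drives the matrix identity above, and (ii) guarantees $G_m$ is a legitimate map. The main obstacle is really just bookkeeping — making sure the self-adjointness facts ($\alpha^\dagger = \alpha$ and $(\delta - m\circ\beta)^\dagger = \delta - m\circ\beta$, hence $(m\circ\beta)^\dagger = m\circ\beta$) are deployed correctly so that the bottom-right entry collapses to $\delta$ — and there is no conceptual difficulty once (\ref{longcalc}) is in hand.
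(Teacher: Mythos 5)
Your proof is correct and takes essentially the same route as the paper: it reuses the computation (\ref{longcalc}) for an arbitrary triple $G$, substitutes the data of $G_m$, simplifies using $m\circ\alpha=\beta^\dagger$ and $\alpha^\dagger=\alpha$, and correctly identifies condition (ii) of Definition~\ref{def:conditional-generator} as exactly the well-definedness obligation. The only (harmless) detour is in the bottom-right entry, where the paper gets $m\circ\alpha\circ m^\dagger = m\circ(\alpha\circ m^\dagger)=m\circ\beta$ directly from $\alpha\circ m^\dagger=\beta$, whereas you pass through the self-adjointness of $m\circ\beta$.
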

\begin{proof} Since $m$ is a conditional generator, $\delta - m \circ \beta$ is $\dagger$-positive, and so $G_m$ is a well-defined map in $\mathfrak{G}\left[ (\mathbb{X}, \dagger) \right]_X$. Now we need to show that (\ref{eq:conditional}) holds. First since $\alpha$ is $\dagger$-positive and $m \circ \alpha = \beta^\dagger$, we also have that $\alpha \circ m^\dagger = \beta$. Now, by setting $k = g - m\circ f$, $\eta   = \delta - m \circ \beta$, and $u = t - m\circ s$, using  (\ref{longcalc}), we compute that:  
\begin{align*}
&(\mathsf{Id}_B \otimes G_m) \circ (\mathsf{copy}_B \otimes \mathsf{Id}_A) \circ (\mathsf{Id}_B \otimes \mathsf{del}_C \otimes \mathsf{Id}_A) \circ (F \otimes \mathsf{Id}_A) \circ \mathsf{copy}_A \\
&=~  \left( \begin{bmatrix} f \\ m \circ f + k
 \end{bmatrix}, \begin{bmatrix} \alpha & \alpha \circ m^\dagger \\ m \circ \alpha &  \eta + m \circ \alpha\circ m^\dagger \end{bmatrix},  \begin{bmatrix} s \\ u + m \circ s \end{bmatrix} \right) \\
 &=~\fitline{ \left( \begin{bmatrix} f \\ m \circ f + g - m\circ f
 \end{bmatrix}, \begin{bmatrix} \alpha & \alpha \circ m^\dagger \\ m \circ \alpha &  \delta -  m \circ \beta + m \circ \beta \end{bmatrix},  \begin{bmatrix} s \\ t - m\circ s + m \circ s \end{bmatrix} \right)} \\
 &=~ \left( \begin{bmatrix} f \\ g 
 \end{bmatrix}, \begin{bmatrix} \alpha & \alpha \circ m^\dagger \\ m \circ \alpha &  \delta  \end{bmatrix},  \begin{bmatrix} s \\ t  \end{bmatrix} \right) = \left( \begin{bmatrix} f \\ g 
 \end{bmatrix}, \begin{bmatrix} \alpha & \beta \\ \beta^\dagger &  \delta  \end{bmatrix},  \begin{bmatrix} s \\ t  \end{bmatrix} \right) = F
\end{align*}
Therefore $G_m$ satisfies (\ref{eq:conditional}) , thus $G_m$ is a conditional of $F$. 
\end{proof}

Combining the above two lemmas implies that conditionals in the Gauss construction correspond to conditional generators in the base category. 

\begin{theorem}\label{thm:cond=condgen} For a map $F= \left( \begin{bmatrix} f \\ g
 \end{bmatrix}, \begin{bmatrix} \alpha & \beta \\
\beta^\dagger & \delta 
\end{bmatrix}, \begin{bmatrix} s \\ t
 \end{bmatrix}  \right): A \to B \otimes C$ in $\mathfrak{G}\left[ (\mathbb{X}, \dagger) \right]_X$, conditionals of $F$ are in bijective correspondence with conditional generators of $\begin{bmatrix} \alpha & \beta \\
\beta^\dagger & \delta 
\end{bmatrix}$. Explicitly: 
 \begin{enumerate}[{\em (i)}]
\item If $G = \left( \begin{bmatrix} m & k
 \end{bmatrix}, \eta, u  \right)$ is a conditional of $F$ then $m$ is a conditional generator of $\begin{bmatrix} \alpha & \beta \\
\beta^\dagger & \delta 
\end{bmatrix}$;
\item If $m$ is a conditional generator of $\begin{bmatrix} \alpha & \beta \\
\beta^\dagger & \delta 
\end{bmatrix}$, then $G_m$ as defined in (\ref{def:cond-m}) is a conditional of $F$;
 \end{enumerate}
 and these constructions are inverses of each other. 
\end{theorem}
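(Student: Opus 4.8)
The plan is to assemble the theorem directly from Lemma~\ref{lem:con-m1} and Lemma~\ref{lemma:m}, which between them already contain all of the computational content; what remains is purely bookkeeping to check that the two assignments are mutually inverse.

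First I would establish part (i). Suppose $G = \left( \begin{bmatrix} m & k \end{bmatrix}, \eta, u \right)$ is a conditional of $F$. By Lemma~\ref{lem:con-m1} we have $m \circ \alpha = \beta^\dagger$ and $\eta = \delta - m \circ \beta$. Since $G$ is a well-defined map in $\mathfrak{G}\left[ (\mathbb{X}, \dagger) \right]_X$, its positive component $\eta$ is $\dagger$-positive, hence $\delta - m \circ \beta$ is $\dagger$-positive. Together with $m \circ \alpha = \beta^\dagger$, this is precisely the condition that $m$ be a conditional generator of $\begin{bmatrix} \alpha & \beta \\ \beta^\dagger & \delta \end{bmatrix}$ in the sense of Def~\ref{def:conditional-generator}. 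Part (ii) is exactly the statement of Lemma~\ref{lemma:m}, so nothing further is needed there.

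Next I would verify that the assignments $G \mapsto m$ (take the $B \to C$ component of a conditional) and $m \mapsto G_m$ (form the triple (\ref{def:cond-m})) are inverse to one another. In one direction, given a conditional generator $m$, the $B \to C$ component of $G_m = \left( \begin{bmatrix} m & g - m \circ f \end{bmatrix}, \delta - m \circ \beta, t - m \circ s \right)$ is by construction $m$ itself. In the other direction, given a conditional $G = \left( \begin{bmatrix} m & k \end{bmatrix}, \eta, u \right)$ with associated generator $m$, Lemma~\ref{lem:con-m1} yields $k = g - m \circ f$, $\eta = \delta - m \circ \beta$, and $u = t - m \circ s$, which are exactly the remaining components of $G_m$; hence $G_m = G$. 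This establishes the bijection.

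The one point that deserves a moment's care — and is really the crux of the result — is that Lemma~\ref{lem:con-m1} does not merely constrain $m$ but in fact pins down \emph{every} component of a conditional $G$ in terms of $m$ and the data of $F$. This is what makes $G \mapsto m$ injective, and hence what upgrades the pair of one-sided constructions in Lemmas~\ref{lem:con-m1} and~\ref{lemma:m} into a genuine bijection. No new computation is required beyond what those lemmas supply; in particular the verification of (\ref{eq:conditional}) is carried out once and for all in Lemma~\ref{lemma:m} through the master calculation (\ref{longcalc}).
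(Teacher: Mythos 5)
Your proof is correct and follows essentially the same route as the paper: part (i) via Lemma~\ref{lem:con-m1} plus the $\dagger$-positivity of $\eta$, part (ii) as a direct citation of Lemma~\ref{lemma:m}, and the bijection from the fact that Lemma~\ref{lem:con-m1} determines every component of $G$ from $m$ and the data of $F$, forcing $G = G_m$. Nothing is missing.
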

\begin{proof}  Lemma~\ref{lemma:m} tells us precisely how to construct conditionals from conditional generators. Now we prove the converse direction. If $F$ has a conditional $G = \left( \begin{bmatrix} m & k
 \end{bmatrix}, \eta, u  \right)$, then by Lemma~\ref{lem:con-m1}, this implies that $m \circ \alpha = \beta^\dagger$ and $\eta = \delta - m \circ \beta$. However since $\eta$ is $\dagger$-positive (since $G$ is a map in $\mathfrak{G}\left[ (\mathbb{X}, \dagger) \right]_X$), it follows that $\delta - m \circ \beta$ is $\dagger$-positive. Thus $m$ is indeed a conditional generator. Moreover, Lemma~\ref{lem:con-m1} also tells us that $G= \left( \begin{bmatrix} m & g -  m \circ f
 \end{bmatrix}, \delta - m \circ \beta, t - m\circ s   \right) =G_m$, which implies that the above constructions are inverses of each other. 
\end{proof}

Therefore a map $F$ in the Gauss construction has a conditional if and only if its $\dagger$-positive component has a conditional generator $m$. Moreover, every conditional of $F$ is of the form $G_m$ for some conditional generator. Now it is important to stress that, like conditionals, conditional generators need not exist, and even if one did exist, it need not be unique. 

\section{Moore-Penrose Inverses}\label{sec:MP}

The Markov category $\mathsf{Gauss}$ has conditionals; where a canonical choice of conditional can be constructed using the \textit{Moore-Penrose inverse}. In this section, we generalize this result to the setting of \textit{Moore-Penrose dagger categories}  showing that the Gauss construction applied to a Moore-Penrose dagger additive category is a Markov category with conditionals, which is the main result of this paper.

Let us begin by reviewing Moore-Penrose inverses in a dagger category, which is a special kind of \textit{generalized inverse} for maps. For a more in-depth introduction to Moore-Penrose dagger (additive) categories, we invite the reader to see \cite{EPTCS384.10,EPTCS426.3}. 

\begin{defi}\label{def:MP} \cite[Def 2.3]{EPTCS384.10} In a dagger category $(\mathbb{X}, \dagger)$, a \textbf{Moore-Penrose inverse} of a map $f: A \to B$ is a map of dual type $f^\circ: B \to A$ such that the following equalities hold: 
\begin{center}
\setlength{\tabcolsep}{0pt}
\begin{tabular}{@{}p{0.48\linewidth}@{\hspace{0.04\linewidth}}p{0.48\linewidth}@{}}
\textnormal{\bfseries[MP.1]}\; \(f \circ f^\circ \circ f = f\)
&
\textnormal{\bfseries[MP.2]}\; \(f^\circ \circ f \circ f^\circ = f^\circ\)
\\[0.6em]
\textnormal{\bfseries[MP.3]}\; \(\bigl(f\circ f^\circ\bigr)^\dagger = f\circ f^\circ\)
&
\textnormal{\bfseries[MP.4]}\; \(\bigl(f^\circ \circ f\bigr)^\dagger = f^\circ \circ f\)
\end{tabular}
\end{center}

If $f$ has a Moore-Penrose inverse, we say that $f$ is \textbf{Moore-Penrose invertible} or simply \textbf{Moore-Penrose}. A \textbf{Moore-Penrose dagger (additive) category} is a dagger (additive) category such that every map is Moore-Penrose. 
\end{defi}

In arbitrary dagger category, Moore-Penrose inverses need not necessary exist, but if a Moore-Penrose inverse does exists, then it is \emph{unique} \cite[Lemma 2.4]{EPTCS384.10}, so we may speak of \emph{the} Moore-Penrose inverse of a map. Thus for a dagger category, being Moore-Penrose is a property rather than structure. 

\begin{example} It is well-known that complex matrices and real matrices have Moore-Penrose inverses. Indeed, given a $n \times m$ complex matrix $A$, there exists a unique complex matrix $A^\circ$ of size $m \times n$ such that $AA^\circ A = A$, $A^\circ A A^\circ  =A^\circ$, $(AA^\circ)^\ast = AA^\circ$, and $(A^\circ A)^\ast = A^\circ A$. When $A$ is a real matrix, then its Moore-Penrose inverse $A^\circ$ will also be a real matrix, however this time the last two identities can instead be written as $(AA^\circ)^\mathsf{T} = AA^\circ$ and $(A^\circ A)^\mathsf{T} = A^\circ A$. See \cite{campbell2009generalized} for various ways of computing the Moore-Penrose inverse. Therefore it follows that $(\mathsf{MAT}(\mathbb{C}), \ast)$ and $(\mathsf{MAT}(\mathbb{R}), \mathsf{T})$ are Moore-Penrose dagger additive categories\footnote{It is important to note that the choice of dagger here is important, since $(\mathsf{MAT}(\mathbb{C}), \mathsf{T})$ is not Moore-Penrose \cite[Ex 2.10]{EPTCS384.10}.} \cite[Ex 2.9]{EPTCS384.10}. Similarly, quaternionic matrices also have Moore-Penrose inverses~\cite[Thm~5.2]{Wang1998}, therefore \((\mathsf{MAT}(\mathbb{H}), *)\) is a Moore-Penrose dagger additive category as well. 
\end{example}

We now turn towards proving our main objective that we can use Moore-Penrose inverses to always construct conditionals in the Gauss construction. Thanks to Theorem \ref{thm:cond=condgen}, it suffices to explain how to use Moore-Penrose inverses to construct conditional generators. To do so, we first show that in a Moore-Penrose dagger additive category, one of the axioms of a conditional generator can be omitted. 

\begin{lemma}\label{lem:MP-con-gen} In a Moore-Penrose dagger additive category $(\mathbb{X}, \dagger)$, a map $m: B \to C$ is a conditional generator of a $\dagger$-positive map $\begin{bmatrix} \alpha & \beta \\
\beta^\dagger & \delta 
\end{bmatrix}: B \oplus C \to B \oplus C$ if and only if $m \circ \alpha = \beta^\dagger$. 
\end{lemma}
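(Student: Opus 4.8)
The ``only if'' direction is immediate from Definition~\ref{def:conditional-generator}: any conditional generator $m$ satisfies $m \circ \alpha = \beta^\dagger$ by definition. So the real content is the converse — assuming only $m \circ \alpha = \beta^\dagger$, we must exhibit $\delta - m \circ \beta$ as a $\dagger$-positive map, i.e.\ produce an explicit $\xi$ with $\delta - m \circ \beta = \xi^\dagger \circ \xi$.

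The plan is to read off such a $\xi$ directly from a $\dagger$-positive factorization of the given block map. Since $\begin{bmatrix} \alpha & \beta \\ \beta^\dagger & \delta \end{bmatrix}$ is $\dagger$-positive, choose $\begin{bmatrix} \phi & \psi \end{bmatrix} : B \oplus C \to D$ with $\begin{bmatrix} \alpha & \beta \\ \beta^\dagger & \delta \end{bmatrix} = \begin{bmatrix} \phi & \psi \end{bmatrix}^\dagger \circ \begin{bmatrix} \phi & \psi \end{bmatrix}$; comparing matrix entries exactly as in Section~\ref{sec:cond-in-gauss} gives $\alpha = \phi^\dagger \circ \phi$, $\beta = \phi^\dagger \circ \psi$, $\beta^\dagger = \psi^\dagger \circ \phi$, and $\delta = \psi^\dagger \circ \psi$. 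The candidate witness is $\xi := \psi - \phi \circ m^\dagger : C \to D$.

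It then suffices to check the single identity $(\psi - \phi \circ m^\dagger)^\dagger \circ (\psi - \phi \circ m^\dagger) = \delta - m \circ \beta$. Expanding the left side by bilinearity of composition over $+$, and using $(\phi \circ m^\dagger)^\dagger = m \circ \phi^\dagger$, one gets $\psi^\dagger \circ \psi - \psi^\dagger \circ \phi \circ m^\dagger - m \circ \phi^\dagger \circ \psi + m \circ \phi^\dagger \circ \phi \circ m^\dagger$; substituting the four entrywise identities above turns this into $\delta - \beta^\dagger \circ m^\dagger - m \circ \beta + m \circ \alpha \circ m^\dagger$, and the hypothesis $m \circ \alpha = \beta^\dagger$ makes the first and last terms cancel, leaving $\delta - m \circ \beta$. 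Hence $\delta - m \circ \beta$ is $\dagger$-positive, so $m$ also satisfies clause (ii) of Definition~\ref{def:conditional-generator} and is a conditional generator.

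The only non-mechanical step is guessing the witness $\psi - \phi \circ m^\dagger$ (morally, ``subtract off the part of $\psi$ predicted by $m$''); everything else is routine dagger-additive algebra and, notably, never uses Moore-Penrose invertibility, so the statement in fact holds over any dagger additive category — the Moore-Penrose hypothesis here is just the ambient setting of the section. (If one prefers an argument that genuinely leans on the section's tools: $m \circ \alpha = \beta^\dagger$ together with self-adjointness of the $\dagger$-positive map $\alpha$ gives $\beta = \alpha \circ m^\dagger$, hence $\alpha \circ \alpha^\circ \circ \beta = \beta$ by the Moore-Penrose identity $\alpha = \alpha \circ \alpha^\circ \circ \alpha$, hence $m \circ \beta = \beta^\dagger \circ \alpha^\circ \circ \beta$, and one concludes from the fact that the generalized Schur complement $\delta - \beta^\dagger \circ \alpha^\circ \circ \beta$ of a $\dagger$-positive block map is again $\dagger$-positive.)
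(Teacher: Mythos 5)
Your proof is correct, and it takes a genuinely different and in fact more economical route than the paper's. The paper also reduces to showing $\delta - m \circ \beta$ is $\dagger$-positive via the factorization $\alpha = \phi^\dagger \circ \phi$, $\beta = \phi^\dagger \circ \psi$, $\delta = \psi^\dagger \circ \psi$, but it leans on Moore--Penrose structure: it invokes the identity $\alpha^\circ = \phi^\circ \circ (\phi^\circ)^\dagger$, computes $m \circ \beta = \psi^\dagger \circ \phi \circ \phi^\circ \circ \psi$ using \textbf{[MP.1]} and \textbf{[MP.3]}, and then exhibits the witness $\chi = (\mathsf{id} - \phi \circ \phi^\circ) \circ \psi$, exploiting that $\phi \circ \phi^\circ$ is a self-adjoint idempotent. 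Your witness $\xi = \psi - \phi \circ m^\dagger$ does the same job by a direct expansion in which the cross terms cancel purely from the hypothesis $m \circ \alpha = \beta^\dagger$ (so that $m \circ \alpha \circ m^\dagger = \beta^\dagger \circ m^\dagger$), and your observation that the argument never touches $\phi^\circ$ is right: the equivalence holds in any dagger additive category, which strengthens the lemma and would also streamline the paper's Lemma~\ref{lemma:phi-bullet}, whose main content is precisely re-running the paper's witness computation with a \textbf{[MP.1,3]}-inverse in place of $\phi^\circ$. One small caveat: your closing parenthetical ``MP-flavoured'' alternative quietly cites the positivity of the generalized Schur complement $\delta - \beta^\dagger \circ \alpha^\circ \circ \beta$ as a known fact, but that fact is exactly what the paper's $\chi$-computation establishes, so as written that variant is not self-contained; your main argument, however, is complete and does not depend on it.
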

\begin{proof} The $\Rightarrow$ direction is immediate by definition. For the $\Leftarrow$ direction, suppose that $m \circ \alpha = \beta^\dagger$. Then to show that $m$ is a conditional generator we need to show that $\delta - m \circ \beta$ is $\dagger$-positive. To do so, first recall that as explained in the previous section, since  $\begin{bmatrix} \alpha & \beta \\
\beta^\dagger & \delta 
\end{bmatrix}$ is $\dagger$-positive, this implies there exists a map $\begin{bmatrix} \phi & \psi \end{bmatrix}: B \oplus C \to D$, together with maps $\phi: B \to D$ and $\psi: C \to D$, such that the following equalities hold:
\begin{align}\label{eq:phi-psi}
\alpha = \phi^\dagger \circ \phi && \delta = \psi^\dagger \circ \psi && \beta^\dagger = \psi^\dagger \circ \phi && \beta = \phi^\dagger \circ \psi 
\end{align}
Now as we are in a Moore-Penrose  dagger category, we also note that by \cite[Lemma 2.7.(v)]{EPTCS384.10}, the following equality also holds:
\begin{align}\label{eq:alpha-circ}
\alpha^\circ  = \phi^\circ \circ {\phi^\circ}^\dagger
\end{align}
which will be key to showing that $\delta - m \circ \beta$ is $\dagger$-positive. Now since $\alpha$ is $\dagger$-positive and $m \circ \alpha = \beta^\dagger$, we also get that $\alpha \circ m^\dagger = \beta$. Then using the above identity relating the Moore-Penrose inverses of $\alpha$ and $\phi$, we compute: 
\begin{align*}
m \circ \beta &= m \circ \alpha \circ m^\dagger = m \circ \alpha \circ \alpha^\circ \circ \alpha \circ m^\dagger = \beta^\dagger \circ \alpha^\circ \circ \beta \\
&=  \psi^\dagger \circ \phi \circ  \phi^\circ \circ {\phi^\circ}^\dagger \circ \phi^\dagger \circ \psi = \psi^\dagger \circ \phi \circ  \phi^\circ \circ (\phi \circ \phi^\circ)^\dagger \circ \psi  \\
&= \psi^\dagger \circ \phi \circ  \phi^\circ \circ \phi \circ \phi^\circ \circ \psi = \psi^\dagger \circ \phi \circ \phi^\circ \circ \psi 
\end{align*}
Thus $m \circ \beta = \psi^\dagger \circ \phi \circ \phi^\circ \circ \psi$. Now consider the map $\chi := (\mathsf{id}_B - \phi \circ \phi^\circ) \circ \psi$. Then we compute: 
\begin{align*}
\chi^\dagger \circ \chi &=~ \left((\mathsf{id}_B - \phi \circ \phi^\circ) \circ \psi\right)^\dagger \circ (\mathsf{id}_B - \phi \circ \phi^\circ) \circ \psi \\
&=~ \psi^\dagger \circ (\mathsf{id}_B - \phi \circ \phi^\circ)^\dagger \circ (\mathsf{id}_B - \phi \circ \phi^\circ) \circ \psi \\
&=~ \psi^\dagger \circ \left(\mathsf{id}^\dagger_B - (\phi \circ \phi^\circ)^\dagger \right) \circ (\mathsf{id}_B - \phi \circ \phi^\circ) \circ \psi \\
&=~ \psi^\dagger \circ \left(\mathsf{id}_B - \phi \circ \phi^\circ \right) \circ (\mathsf{id}_B - \phi \circ \phi^\circ) \circ \psi \\
&=~ \psi^\dagger \circ \left( \mathsf{id}_B - \phi \circ \phi^\circ - \phi \circ \phi^\circ + \phi \circ \phi^\circ \circ \phi \circ \phi^\circ \right) \circ \psi \\
&=~ \psi^\dagger \circ \left( \mathsf{id}_B - \phi \circ \phi^\circ - \phi \circ \phi^\circ + \phi \circ \phi^\circ \right) \circ \psi \\
&=~ \psi^\dagger \circ \left( \mathsf{id}_B - \phi \circ \phi^\circ \right) \circ \psi \\
&=~ \psi^\dagger \circ \psi - \psi^\dagger \circ \phi \circ \phi^\circ \circ \psi \\
&=~ \delta - m \circ \beta
\end{align*}
So $\delta - m \circ \beta$ is $\dagger$-positive and thus $m$ is a conditional generator. 
\end{proof}

Thanks to Lemma~\ref{lem:MP-con-gen}, we can easily show how to use the Moore-Penrose inverse to build a conditional generator.

\begin{proposition}\label{prop:alpha-circ-gamma} In a Moore-Penrose dagger additive category $(\mathbb{X}, \dagger)$, if $\begin{bmatrix} \alpha & \beta \\
\beta^\dagger & \delta 
\end{bmatrix}: B \oplus C \to B \oplus C$ is $\dagger$-positive, then it has a conditional generator $m = \beta^\dagger \circ \alpha^\circ$. 
\end{proposition}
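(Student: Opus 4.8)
The plan is to invoke Lemma~\ref{lem:MP-con-gen}, which in a Moore-Penrose dagger additive category reduces being a conditional generator of $\begin{bmatrix} \alpha & \beta \\ \beta^\dagger & \delta \end{bmatrix}$ to the single equation $m \circ \alpha = \beta^\dagger$; the positivity of $\delta - m \circ \beta$ is then automatic. So the entire content of the proposition collapses to verifying that
\[
\beta^\dagger \circ \alpha^\circ \circ \alpha = \beta^\dagger .
\]

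To check this I would reuse exactly the data already extracted in Sec~\ref{sec:cond-in-gauss} and in the proof of Lemma~\ref{lem:MP-con-gen}. Since $\begin{bmatrix} \alpha & \beta \\ \beta^\dagger & \delta \end{bmatrix}$ is $\dagger$-positive, there are maps $\phi \colon B \to D$ and $\psi \colon C \to D$ satisfying the identities (\ref{eq:phi-psi}), namely $\alpha = \phi^\dagger \circ \phi$, $\beta = \phi^\dagger \circ \psi$, $\beta^\dagger = \psi^\dagger \circ \phi$, $\delta = \psi^\dagger \circ \psi$; and by \cite[Lemma 2.7.(v)]{EPTCS384.10}, already recorded as (\ref{eq:alpha-circ}), we have $\alpha^\circ = \phi^\circ \circ (\phi^\circ)^\dagger$. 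The key step is the computation
\begin{align*}
\alpha^\circ \circ \alpha &= \phi^\circ \circ (\phi^\circ)^\dagger \circ \phi^\dagger \circ \phi = \phi^\circ \circ (\phi \circ \phi^\circ)^\dagger \circ \phi \\
&= \phi^\circ \circ \phi \circ \phi^\circ \circ \phi = \phi^\circ \circ \phi,
\end{align*}
using [MP.3] for the second equality and [MP.2] for the last. Precomposing with $\phi$ and applying [MP.1] gives $\phi \circ \alpha^\circ \circ \alpha = \phi \circ \phi^\circ \circ \phi = \phi$, and therefore $\beta^\dagger \circ \alpha^\circ \circ \alpha = \psi^\dagger \circ \phi \circ \alpha^\circ \circ \alpha = \psi^\dagger \circ \phi = \beta^\dagger$. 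By Lemma~\ref{lem:MP-con-gen}, $m = \beta^\dagger \circ \alpha^\circ$ is a conditional generator.

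I do not expect a genuine obstacle here: once Lemma~\ref{lem:MP-con-gen} has removed the positivity obligation, the proposition is a three-line manipulation of the Moore-Penrose axioms together with the previously established formula (\ref{eq:alpha-circ}) for $\alpha^\circ$. The only point requiring a little care is that $\phi$ and $\psi$ are the same maps fixed in the surrounding discussion, so that (\ref{eq:phi-psi}) and (\ref{eq:alpha-circ}) apply verbatim. One can also organize the same argument slightly differently — observe that $\alpha \circ \alpha^\circ$ fixes $\beta$, i.e. $\alpha \circ \alpha^\circ \circ \beta = \beta$ (which again uses the factorization, since $\alpha \circ \alpha^\circ \circ \phi^\dagger = \phi^\dagger$ by [MP.1] and [MP.3]), and then take daggers, using that $\alpha^\circ$ is self-adjoint because $\alpha$ is — but this is the same computation repackaged, so I would present the version above.
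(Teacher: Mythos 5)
Your proposal is correct and follows essentially the same route as the paper: reduce to $m \circ \alpha = \beta^\dagger$ via Lemma~\ref{lem:MP-con-gen}, then verify that identity using the factorization $\alpha = \phi^\dagger \circ \phi$, $\beta^\dagger = \psi^\dagger \circ \phi$ together with $\alpha^\circ = \phi^\circ \circ (\phi^\circ)^\dagger$ and the Moore--Penrose axioms. Isolating $\alpha^\circ \circ \alpha = \phi^\circ \circ \phi$ as a first step is only a cosmetic reorganization of the paper's single chain of equalities.
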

\begin{proof} By Lemma~\ref{lem:MP-con-gen}, it suffices to show that $m \circ \alpha = \beta^\dagger$. As in the previous proof, since  $\begin{bmatrix} \alpha & \beta \\
\beta^\dagger & \delta 
\end{bmatrix}$ is $\dagger$-positive, this implies there exists  a map $\begin{bmatrix} \phi & \psi \end{bmatrix}: B \oplus C \to D$, together with maps $\phi: B \to D$ and $\psi: C \to D$, such that the equalities in (\ref{eq:phi-psi}) hold. Then using \textbf{[MP.1]}, \textbf{[MP.3]}, and (\ref{eq:alpha-circ}), we compute:
\begin{align*}
m \circ \alpha &= \beta^\dagger \circ \alpha^\circ \circ \alpha =  \psi^\dagger \circ \phi \circ \phi^\circ \circ {\phi^\circ}^\dagger \circ \phi^\dagger \circ \phi = \psi^\dagger \circ \phi \circ \phi^\circ \circ (\phi \circ \phi^\circ)^\dagger \circ \phi \\
&= \psi^\dagger \circ \phi \circ \phi^\circ \circ \phi \circ \phi^\circ \circ \phi = \psi^\dagger \circ \phi \circ \phi^\circ \circ \phi = \psi^\dagger \circ \phi = \beta^\dagger
\end{align*}
So $m \circ \alpha = \beta^\dagger$ and thus $m = \beta^\dagger \circ \alpha^\circ$ is a conditional generator. 
\end{proof}

We may now state the main result of this paper. 

\begin{theorem}\label{thm:MP-Gauss-Markov} Given a Moore-Penrose dagger additive category $(\mathbb{X}, \dagger)$ and any object $X \in \mathbb{X}$, $\mathfrak{G}\left[ (\mathbb{X}, \dagger) \right]_X$ is a Markov category with conditionals. In particular, given a map $F= \left( \begin{bmatrix} f \\ g
 \end{bmatrix}, \begin{bmatrix} \alpha & \beta \\
\beta^\dagger & \delta 
\end{bmatrix}, \begin{bmatrix} s \\ t
 \end{bmatrix}  \right): A \to B \otimes C$ in $\mathfrak{G}\left[ (\mathbb{X}, \dagger) \right]_X$, the map $F\vert_B: B \otimes A \to C$ defined as the triple: 
\begin{align}\label{eq:FvertB}
F\vert_B = \left( \begin{bmatrix} \beta^\dagger \circ \alpha^\circ & g - \beta^\dagger \circ \alpha^\circ \circ f \end{bmatrix}, \delta - \beta^\dagger \circ \alpha^\circ \circ \beta, t - \beta^\dagger \circ \alpha^\circ \circ s \right)
\end{align}
is a conditional of $F$.
\end{theorem}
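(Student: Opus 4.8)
The plan is to assemble the result from machinery already in place. By Theorem~\ref{thm:gauss-markov}, $\mathfrak{G}\left[ (\mathbb{X}, \dagger) \right]_X$ is a Markov category, so the only thing left to establish is that every map $F : A \to B \otimes C$ admits a conditional. As observed at the start of Section~\ref{sec:cond-in-gauss}, the $\dagger$-positive component of such an $F$, being a self-adjoint endomorphism of $B \oplus C$ arising from some $\begin{bmatrix} \phi & \psi \end{bmatrix}$, is automatically of block form $\begin{bmatrix} \alpha & \beta \\ \beta^\dagger & \delta \end{bmatrix}$ with $\alpha$ and $\delta$ themselves $\dagger$-positive; hence every map of this type genuinely looks like the $F$ appearing in the statement, and no generality is lost.

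Next I would invoke Proposition~\ref{prop:alpha-circ-gamma}: since $(\mathbb{X}, \dagger)$ is Moore-Penrose, the $\dagger$-positive map $\begin{bmatrix} \alpha & \beta \\ \beta^\dagger & \delta \end{bmatrix}$ has the conditional generator $m := \beta^\dagger \circ \alpha^\circ$. Feeding this $m$ into Lemma~\ref{lemma:m} — equivalently, through the bijection of Theorem~\ref{thm:cond=condgen} — yields the conditional $G_m$ of $F$. It then remains to substitute $m = \beta^\dagger \circ \alpha^\circ$ into the formula~(\ref{def:cond-m}) for $G_m$ and read off that its three components, namely $\begin{bmatrix} m & g - m \circ f \end{bmatrix}$, $\delta - m \circ \beta$, and $t - m \circ s$, coincide termwise with the triple displayed in~(\ref{eq:FvertB}). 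Since every map into a binary tensor product thereby has a conditional, $\mathfrak{G}\left[ (\mathbb{X}, \dagger) \right]_X$ is a Markov category with conditionals.

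I do not anticipate a genuine obstacle: the substance of the argument has already been discharged in Proposition~\ref{prop:alpha-circ-gamma} (which is the real heart, relying on the identity $\alpha^\circ = \phi^\circ \circ {\phi^\circ}^\dagger$ of~(\ref{eq:alpha-circ}) together with the Moore-Penrose axioms) and in Lemma~\ref{lemma:m}, and the final step is a direct substitution. The one point worth flagging in the proof is that this $F\vert_B$ is only \emph{one} conditional among possibly many — it is the canonical choice precisely because the Moore-Penrose inverse, when it exists, is unique — which ties back to the discussion following Theorem~\ref{thm:cond=condgen}.
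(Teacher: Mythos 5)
Your proposal is correct and follows exactly the paper's own argument: invoke Proposition~\ref{prop:alpha-circ-gamma} to obtain the conditional generator $m = \beta^\dagger \circ \alpha^\circ$, then feed it into Lemma~\ref{lemma:m} and observe that $G_m$ is precisely $F\vert_B$. The preliminary remarks about the block form of the positive component and about non-uniqueness are accurate but not needed beyond what those two cited results already supply.
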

\begin{proof} By Prop~\ref{prop:alpha-circ-gamma}, we know that $\beta^\dagger \circ \alpha^\circ$ is a conditional generator for $\begin{bmatrix} \alpha & \beta \\
\beta^\dagger & \delta 
\end{bmatrix}$. Therefore, by Lemma~\ref{lemma:m}, it follows that $F$ has a conditional $G_{\beta^\dagger \circ \alpha^\circ}$ as defined in (\ref{def:cond-m}), which is precisely $F\vert_B$. Thus, we conclude that $\mathfrak{G}\left[ (\mathbb{X}, \dagger) \right]_X$ is a Markov category with conditionals.
\end{proof}

\begin{example} Applying Theorem~\ref{thm:MP-Gauss-Markov} to real matrices recaptures the fact that $\mathfrak{G}\left[ (\mathsf{MAT}(\mathbb{R}), \mathsf{T}) \right]_1 \cong \mathsf{Gauss}$ has conditionals (Ex~\ref{ex:Gauss}), which was of course our main motivating example. Furthermore, we also get that $\mathfrak{P}\left[ (\mathbb{X}, \dagger) \right] \cong \mathfrak{G}\left[ (\mathbb{X}, \dagger) \right]_{\mathsf{0}}$ and $\mathfrak{G}\left[ (\mathsf{MAT}(\mathbb{R}), \mathsf{T}) \right]_k \cong \mathsf{Gauss}_k$ are also Markov categories with conditionals. 
\end{example}

\begingroup \sloppy
\begin{example} Applying Theorem~\ref{thm:MP-Gauss-Markov} to complex matrices implies that the Markov category of proper complex Gaussian conditional distributions, $\mathsf{Gauss}^\mathbb{C}_p \coloneqq \mathfrak{G}\left[ (\mathsf{MAT}(\mathbb{C}), \ast) \right]_1$, has conditionals. Similarly, the Markov category of circularly symmetric complex Gaussian conditional distributions, \(\mathfrak{P}(\mathsf{MAT}(\mathbb{C}), \ast)\cong \mathfrak{G}\left[ (\mathsf{MAT}(\mathbb{C}), \ast) \right]_0\), also has conditionals. Similarly for any \(n\in \mathbb{N}\), the Markov category \(\mathfrak{G}\left[ (\mathsf{MAT}(\mathbb{C}), \ast) \right]_n\) has conditionals.
\end{example}
\endgroup

\begin{example}  Applying Theorem~\ref{thm:MP-Gauss-Markov} to quaternionic matrices gives us that the Markov category of proper quaternionic Gaussian conditional distrubtions,  $\mathfrak{G}\left[ (\mathsf{MAT}(\mathbb{H}), \ast) \right]_1 \eqqcolon  \mathsf{Gauss}^\mathbb{H}_p$, has conditionals. Similarly, the Markov category of spherically symmetric quaternionic Gaussian conditional distributions, \(\mathfrak{P}(\mathsf{MAT}(\mathbb{H}), \ast)\cong \mathfrak{G}\left[ (\mathsf{MAT}(\mathbb{H}), \ast) \right]_0\), also has conditionals. Similarly for any \(n\in \mathbb{N}\), the Markov category \(\mathfrak{G}\left[ (\mathsf{MAT}(\mathbb{H}), \ast) \right]_n\) has conditionals.
\end{example}

It would be interesting to apply the Gauss construction to other examples of Moore-Penrose dagger additive categories. For example, we can apply the Gauss construction to non matrix examples of Moore-Penrose dagger additive categories such as on the category of polarized abelian varieties \cite{auffarth2025pseudoinversos}, giving a potential new link from algebraic geometry to probability theory. 

\section{Other Constructions of Conditionals}

In the previous section we showed how to use Moore-Penrose inverses to build conditionals in the Gauss construction. However, eagle-eyed readers may have noticed that we did not need the full power of the Moore-Penrose inverse to do so! In this section, we explain how we can use slightly weaker generalized inverses to build conditionals in the Gauss construction. We begin by introducing the analogue of $(i,j,k)$-generalized inverses \cite[Def 6.2.4]{campbell2009generalized} in a dagger category.

\begin{defi}\label{def:MP-ijk} Let $\lbrace i_{1} < \hdots < i_{j} \rbrace \subseteq \lbrace 1 < 2 < 3 < 4 \rbrace$. Then in a dagger category $(\mathbb{X}, \dagger)$, a \textbf{[MP.$i_1,\hdots,i_j$]-inverse} of a map $f: A \to B$ is a map of dual type $f^\bullet: B \to A$ which satisfies \textbf{[MP.$i_1$]}, ..., \textbf{[MP.$i_j$]}. 
\end{defi}

Notice of course that a \textbf{[MP.1,2,3,4]}-inverse is precisely the Moore-Penrose inverse. We will be particularly interested with \textbf{[MP.1,3]}-inverses. Note that while Moore-Penrose inverses are unique, \textbf{[MP.1,3]}-inverses need not be unique. Indeed, note that a Moore-Penrose inverse is always a \textbf{[MP.1,3]}-inverse, however, even in a Moore-Penrose dagger category, a map $f$ can have a \textbf{[MP.1,3]}-inverse $f^\bullet$ which is not its Moore-Penrose inverse $f^\circ$. For example for complex matrices, \textbf{[MP.1,3]}-inverses correspond to \emph{least squares inverses} \cite[Fig 6.1]{campbell2009generalized} which is used to give least squares solutions \cite[Def 2.1.1]{campbell2009generalized}, and a complex matrix can have multiple least squares inverses. 

Our objective is to show that we can build conditionals in the Gauss construction using \textbf{[MP.1,3]}-inverses. For the remainder of this section, we work in a dagger additive category $(\mathbb{X}, \dagger)$. 

\begin{lemma}\label{lemma:phi-bullet} Let $\begin{bmatrix} \alpha & \beta \\
\beta^\dagger & \delta 
\end{bmatrix}: B \oplus C \to B\oplus C$ be a $\dagger$-positive map in $(\mathbb{X}, \dagger)$ such that there is a map $\begin{bmatrix} \phi & \psi \end{bmatrix}: B \oplus C \to D$ such that $\begin{bmatrix} \alpha & \beta \\
\beta^\dagger & \delta 
\end{bmatrix}=\begin{bmatrix} \phi & \psi \end{bmatrix}^\dagger \circ \begin{bmatrix} \phi & \psi \end{bmatrix}$ for which $\phi: B \to D$ has a \textbf{[MP.1,3]}-inverse $\phi^\bullet: D \to B$. Then $m = \psi^\dagger \circ {\phi^\bullet}^\dagger$ is a conditional generator of $\begin{bmatrix} \alpha & \beta \\
\beta^\dagger & \delta 
\end{bmatrix}$. 
\end{lemma}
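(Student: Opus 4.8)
The plan is to check directly that $m = \psi^\dagger \circ {\phi^\bullet}^\dagger$ satisfies the two conditions in Def~\ref{def:conditional-generator}, following the same strategy as the proof of Lemma~\ref{lem:MP-con-gen} but with the Moore-Penrose inverse replaced throughout by the weaker \textbf{[MP.1,3]}-inverse. First I would unpack the hypothesis: since $\begin{bmatrix} \alpha & \beta \\ \beta^\dagger & \delta \end{bmatrix} = \begin{bmatrix} \phi & \psi \end{bmatrix}^\dagger \circ \begin{bmatrix} \phi & \psi \end{bmatrix}$, expanding the matrix product yields $\alpha = \phi^\dagger \circ \phi$, $\beta = \phi^\dagger \circ \psi$, $\beta^\dagger = \psi^\dagger \circ \phi$, and $\delta = \psi^\dagger \circ \psi$, exactly as in (\ref{eq:phi-psi}). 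The only properties of $\phi^\bullet$ used will be \textbf{[MP.1]}, $\phi \circ \phi^\bullet \circ \phi = \phi$, and \textbf{[MP.3]}, $(\phi \circ \phi^\bullet)^\dagger = \phi \circ \phi^\bullet$.

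For condition (i) I would compute $m \circ \alpha = \psi^\dagger \circ {\phi^\bullet}^\dagger \circ \phi^\dagger \circ \phi = \psi^\dagger \circ (\phi \circ \phi^\bullet)^\dagger \circ \phi$, then use \textbf{[MP.3]} to replace $(\phi \circ \phi^\bullet)^\dagger$ by $\phi \circ \phi^\bullet$ and \textbf{[MP.1]} to collapse $\phi \circ \phi^\bullet \circ \phi$ to $\phi$, leaving $\psi^\dagger \circ \phi = \beta^\dagger$, as required.

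For condition (ii), that $\delta - m \circ \beta$ is $\dagger$-positive, I would first simplify $\delta - m \circ \beta = \psi^\dagger \circ \psi - \psi^\dagger \circ (\phi \circ \phi^\bullet)^\dagger \circ \psi = \psi^\dagger \circ (\mathsf{id}_B - \phi \circ \phi^\bullet) \circ \psi$, again by \textbf{[MP.3]}. The crux is to observe that $e := \mathsf{id}_B - \phi \circ \phi^\bullet$ is a self-adjoint idempotent: self-adjointness is immediate from \textbf{[MP.3]}, and $e \circ e = e$ because \textbf{[MP.1]} gives $\phi \circ \phi^\bullet \circ \phi \circ \phi^\bullet = (\phi \circ \phi^\bullet \circ \phi) \circ \phi^\bullet = \phi \circ \phi^\bullet$. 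Setting $\chi := e \circ \psi$, one then gets $\chi^\dagger \circ \chi = \psi^\dagger \circ e^\dagger \circ e \circ \psi = \psi^\dagger \circ e \circ \psi = \delta - m \circ \beta$, exhibiting $\delta - m \circ \beta$ as $\dagger$-positive. With both conditions verified, $m$ is a conditional generator.

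I do not expect a genuine obstacle here: the computation is short and purely formal, essentially a thrifty rerun of the proof of Lemma~\ref{lem:MP-con-gen}. The one point worth emphasising is that \textbf{[MP.2]} and \textbf{[MP.4]} are never needed, so $e$ is merely \emph{some} self-adjoint idempotent rather than the canonical orthogonal projection onto the orthogonal complement of the range of $\phi$; but that weaker property is exactly what makes $\psi^\dagger \circ e \circ \psi$ of the form $\chi^\dagger \circ \chi$. This is why \textbf{[MP.1,3]}-inverses — and, in the matrix case, least squares inverses — already suffice to build conditionals in the Gauss construction.
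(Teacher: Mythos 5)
Your proposal is correct and follows essentially the same route as the paper's proof: verify $m \circ \alpha = \beta^\dagger$ via \textbf{[MP.3]} and \textbf{[MP.1]}, then exhibit $\delta - m\circ\beta$ as $\chi^\dagger\circ\chi$ for $\chi = (\mathsf{id} - \phi\circ\phi^\bullet)\circ\psi$. Your version is slightly tidier in two cosmetic respects --- you compute $m\circ\beta$ directly from $\beta = \phi^\dagger\circ\psi$ rather than via $m\circ\alpha\circ m^\dagger$, and you package the positivity step by noting that $\mathsf{id} - \phi\circ\phi^\bullet$ is a self-adjoint idempotent --- but the substance is identical.
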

\begin{proof} The computations in this proof are similar to those in the proof of Lemma~\ref{lem:MP-con-gen} but there are some subtle differences. Now by assumption, recall that $\phi$ and $\psi$ satisfy (\ref{eq:phi-psi}). Also by assumption, $\phi^\bullet$ being a \textbf{[MP.1,3]}-inverse of $\phi$ implies that is satisfies the following:
\begin{align*}
    \phi \circ \phi^\bullet \circ \phi = \phi &\quad & (\phi \circ \phi^\bullet)^\dagger = \phi \circ \phi^\bullet
\end{align*}
We need to show that $m = \psi^\dagger \circ {\phi^\bullet}^\dagger$ satisfies the two necessary properties to be a conditional generator. We first compute that: 
\begin{align*}
m \circ \alpha = \psi^\dagger \circ {\phi^\bullet}^\dagger \circ \phi^\dagger \circ \phi = \psi^\dagger \circ (\phi \circ \phi^\bullet)^\dagger \circ \phi = \psi^\dagger \circ \phi \circ \phi^\bullet \circ \phi = \psi^\dagger \circ \phi = \beta^\dagger 
\end{align*}
So $m \circ \alpha = \beta^\dagger$ as desired. Next we need to show that $\delta - m \circ \beta$ is $\dagger$-positive. As before, since $m \circ \alpha = \beta^\dagger$, we also get that $\beta = \alpha \circ m^\dagger$. So we compute:  
\begin{align*}
m \circ \beta &= m \circ \alpha \circ m^\dagger = m \circ \phi^\dagger \circ \phi \circ m^\dagger  = \psi^\dagger \circ {\phi^\bullet}^\dagger \circ \phi^\dagger \circ \phi \circ \phi^\bullet \circ \psi \\
&=  \psi^\dagger \circ (\phi \circ \phi^\bullet)^\dagger \circ \phi \circ \phi^\bullet \circ \psi 
\psi^\dagger \circ \phi \circ \phi^\bullet \circ \phi \circ \phi^\bullet \circ \psi =
\psi^\dagger \circ \phi \circ \phi^\bullet \circ \psi
\end{align*}
Thus $m \circ \beta = \psi^\dagger \circ \phi \circ \phi^\bullet \circ \psi$. Then consider the map $\zeta = (\mathsf{id}_B - \phi \circ \phi^\bullet) \circ \psi$. So we compute that:
\begin{align*}
\zeta^\dagger \circ \zeta &=~ \left((\mathsf{id}_B - \phi \circ \phi^\bullet) \circ \psi\right)^\dagger \circ (\mathsf{id}_B - \phi \circ \phi^\bullet) \circ \psi 
\\&=~ \psi^\dagger \circ (\mathsf{id}_B - \phi \circ \phi^\bullet)^\dagger \circ (\mathsf{id}_B - \phi \circ \phi^\bullet) \circ \psi \\
&=~ \psi^\dagger \circ \left(\mathsf{id}^\dagger_B - (\phi \circ \phi^\bullet)^\dagger \right) \circ (\mathsf{id}_B - \phi \circ \phi^\bullet) \circ \psi \\
&=~ \psi^\dagger \circ \left(\mathsf{id}_B - \phi \circ \phi^\bullet \right) \circ (\mathsf{id}_B - \phi \circ \phi^\bullet) \circ \psi \\
&=~ \psi^\dagger \circ \left( \mathsf{id}_B - \phi \circ \phi^\bullet - \phi \circ \phi^\bullet + \phi \circ \phi^\bullet \circ \phi \circ \phi^\bullet \right) \circ \psi \\
&=~ \psi^\dagger \circ \left( \mathsf{id}_B - \phi \circ \phi^\bullet - \phi \circ \phi^\bullet + \phi \circ \phi^\bullet \right) \circ \psi \\
&=~ \psi^\dagger \circ \left( \mathsf{id}_B - \phi \circ \phi^\bullet \right) \circ \psi \\
&=~ \psi^\dagger \circ \psi - \psi^\dagger \circ \phi \circ \phi^\bullet \circ \psi \\
&=~ \delta - m \circ \beta 
\end{align*}
So $\delta - m \circ \beta$ is indeed $\dagger$-positive. Therefore, we conclude that $m = \psi^\dagger \circ {\phi^\bullet}^\dagger$ is a conditional generator. 
\end{proof}

Therefore, it follows that if every map in our underlying dagger category has a \textbf{[MP.1,3]}-inverse, the Gauss construction has conditionals. 

\begin{proposition} If every map in $(\mathbb{X}, \dagger)$ admits a \textbf{[MP.1,3]}-inverse, then for every object $X \in \mathbb{X}$, $\mathfrak{G}\left[ (\mathbb{X}, \dagger) \right]_X$ is a Markov category with conditionals.
\end{proposition}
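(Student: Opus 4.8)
The plan is to assemble the proposition directly from the machinery already in place, with essentially no new computation required. By Theorem~\ref{thm:gauss-markov}, $\mathfrak{G}\left[ (\mathbb{X}, \dagger) \right]_X$ is already a Markov category, so the only thing to establish is that every map $F: A \to B \otimes C$ admits a conditional. By Theorem~\ref{thm:cond=condgen}, this amounts to showing that the $\dagger$-positive component $\begin{bmatrix} \alpha & \beta \\ \beta^\dagger & \delta \end{bmatrix}: B \oplus C \to B \oplus C$ of $F$ admits a conditional generator.

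First I would unpack the $\dagger$-positive component. Since $\begin{bmatrix} \alpha & \beta \\ \beta^\dagger & \delta \end{bmatrix}$ is $\dagger$-positive, by definition there is some map $\begin{bmatrix} \phi & \psi \end{bmatrix}: B \oplus C \to D$ with $\phi: B \to D$ and $\psi: C \to D$ such that $\begin{bmatrix} \alpha & \beta \\ \beta^\dagger & \delta \end{bmatrix} = \begin{bmatrix} \phi & \psi \end{bmatrix}^\dagger \circ \begin{bmatrix} \phi & \psi \end{bmatrix}$, exactly as set up in Section~\ref{sec:cond-in-gauss}. Now by hypothesis, \emph{every} map in $(\mathbb{X}, \dagger)$ admits a \textbf{[MP.1,3]}-inverse; in particular $\phi$ has a \textbf{[MP.1,3]}-inverse $\phi^\bullet: D \to B$. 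Then Lemma~\ref{lemma:phi-bullet} applies verbatim and tells us that $m = \psi^\dagger \circ {\phi^\bullet}^\dagger$ is a conditional generator of $\begin{bmatrix} \alpha & \beta \\ \beta^\dagger & \delta \end{bmatrix}$.

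Finally, invoking Lemma~\ref{lemma:m} (or equivalently the second clause of Theorem~\ref{thm:cond=condgen}), the triple $G_m$ built from this conditional generator is a conditional of $F$. Since $F$ was an arbitrary map of product type, $\mathfrak{G}\left[ (\mathbb{X}, \dagger) \right]_X$ has all conditionals, and is therefore a Markov category with conditionals. I do not expect any real obstacle here: the proof is a short chain of references, and the only point to be careful about is making sure the decomposition $\begin{bmatrix} \phi & \psi \end{bmatrix}$ of the $\dagger$-positive component is introduced explicitly before applying Lemma~\ref{lemma:phi-bullet}, since that lemma is stated in terms of such a decomposition rather than in terms of an abstract $\dagger$-positive map. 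One might also remark, to parallel the discussion after Theorem~\ref{thm:MP-Gauss-Markov}, that since \textbf{[MP.1,3]}-inverses are generally non-unique the conditionals produced this way need not be canonical, in contrast to the Moore-Penrose construction.
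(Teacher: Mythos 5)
Your proof is correct and is exactly the argument the paper intends: the proposition is stated as an immediate consequence of Lemma~\ref{lemma:phi-bullet} together with Theorem~\ref{thm:gauss-markov} and the conditional-generator machinery of Theorem~\ref{thm:cond=condgen} and Lemma~\ref{lemma:m}. Your care in explicitly introducing the decomposition $\begin{bmatrix} \phi & \psi \end{bmatrix}$ before invoking Lemma~\ref{lemma:phi-bullet} is well placed, and your closing remark on non-uniqueness matches the paper's own discussion.
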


Of course every map in a Moore-Penrose dagger additive category admits a \textbf{[MP.1,3]}-inverse, namely the Moore-Penrose inverse. However, it is important to again stress that a map could have a \textbf{[MP.1,3]}-inverse which is not the Moore-Penrose inverse. Thus Lemma~\ref{lemma:phi-bullet} will in general produce a different conditional generator to that of Prop~\ref{prop:alpha-circ-gamma}. This is to be expected, since conditionals are far from being unique. However, if we use the Moore-Penrose inverse to take the role of the \textbf{[MP.1,3]}-inverse in Lemma~\ref{lemma:phi-bullet}, we do get back our favourite conditional generator of Prop~\ref{prop:alpha-circ-gamma}. 

\begin{lemma} Suppose that $(\mathbb{X}, \dagger)$ is Moore-Penrose. Let $\begin{bmatrix} \alpha & \beta \\
\beta^\dagger & \delta 
\end{bmatrix}$ be a $\dagger$-positive map in $(\mathbb{X}, \dagger)$. Then for any map $\begin{bmatrix} \phi & \psi \end{bmatrix}$ such that $\begin{bmatrix} \alpha & \beta \\
\beta^\dagger & \delta 
\end{bmatrix}=\begin{bmatrix} \phi & \psi \end{bmatrix}^\dagger \circ \begin{bmatrix} \phi & \psi \end{bmatrix}$, we have that $\beta^\dagger \circ \alpha^\circ = \psi^\dagger \circ {\phi^\circ}^\dagger$. 
\end{lemma}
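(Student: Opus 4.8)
The plan is a short direct computation combining the factorisation data with the formula for $\alpha^\circ$ already recorded in the previous section. First, expanding the matrix identity $\begin{bmatrix} \alpha & \beta \\ \beta^\dagger & \delta \end{bmatrix} = \begin{bmatrix} \phi & \psi \end{bmatrix}^\dagger \circ \begin{bmatrix} \phi & \psi \end{bmatrix}$ gives, exactly as in \eqref{eq:phi-psi}, that $\alpha = \phi^\dagger \circ \phi$, $\beta = \phi^\dagger \circ \psi$, $\beta^\dagger = \psi^\dagger \circ \phi$, and $\delta = \psi^\dagger \circ \psi$. Second, since $\alpha = \phi^\dagger \circ \phi$ and $(\mathbb{X}, \dagger)$ is Moore-Penrose, \eqref{eq:alpha-circ} (that is, \cite[Lemma 2.7.(v)]{EPTCS384.10}) gives $\alpha^\circ = \phi^\circ \circ {\phi^\circ}^\dagger$. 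Substituting both into the left-hand side yields $\beta^\dagger \circ \alpha^\circ = \psi^\dagger \circ \phi \circ \phi^\circ \circ {\phi^\circ}^\dagger$, so the whole statement reduces to the single identity $\phi \circ \phi^\circ \circ {\phi^\circ}^\dagger = {\phi^\circ}^\dagger$.

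To establish this identity I would first rewrite $\phi \circ \phi^\circ$ by taking the dagger of \textbf{[MP.3]}: from $(\phi \circ \phi^\circ)^\dagger = \phi \circ \phi^\circ$ we get $\phi \circ \phi^\circ = {\phi^\circ}^\dagger \circ \phi^\dagger$. Hence $\phi \circ \phi^\circ \circ {\phi^\circ}^\dagger = {\phi^\circ}^\dagger \circ \phi^\dagger \circ {\phi^\circ}^\dagger = (\phi^\circ \circ \phi \circ \phi^\circ)^\dagger$, and by \textbf{[MP.2]} the latter equals ${\phi^\circ}^\dagger$. Assembling, $\beta^\dagger \circ \alpha^\circ = \psi^\dagger \circ \phi \circ \phi^\circ \circ {\phi^\circ}^\dagger = \psi^\dagger \circ {\phi^\circ}^\dagger$, as claimed.

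There is no real obstacle here; the only points requiring care are (i) correctly daggering the Moore-Penrose axioms — so that the adjoint of \textbf{[MP.2]} reads ${\phi^\circ}^\dagger \circ \phi^\dagger \circ {\phi^\circ}^\dagger = {\phi^\circ}^\dagger$ and the adjoint of \textbf{[MP.3]} gives the rewriting of $\phi \circ \phi^\circ$ used above — and (ii) noting that \eqref{eq:alpha-circ} is legitimate precisely because we are in a Moore-Penrose dagger category, so that $\phi^\circ$ exists. One might be tempted to argue instead that $\psi^\dagger \circ {\phi^\circ}^\dagger$ and $\beta^\dagger \circ \alpha^\circ$ are both conditional generators of $\begin{bmatrix} \alpha & \beta \\ \beta^\dagger & \delta \end{bmatrix}$ (by Lemma~\ref{lemma:phi-bullet} with $\phi^\bullet = \phi^\circ$, and by Prop~\ref{prop:alpha-circ-gamma}, respectively), but since conditional generators are not unique this does not by itself yield the equality, so the explicit calculation above is the one to carry out.
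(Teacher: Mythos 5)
Your proof is correct and is essentially the paper's own argument: both reduce to $\psi^\dagger \circ \phi \circ \phi^\circ \circ {\phi^\circ}^\dagger$ via (\ref{eq:phi-psi}) and (\ref{eq:alpha-circ}), then use \textbf{[MP.3]} to replace $\phi \circ \phi^\circ$ by its adjoint and \textbf{[MP.2]} to collapse $(\phi^\circ \circ \phi \circ \phi^\circ)^\dagger$ to ${\phi^\circ}^\dagger$. Your closing remark that the ``both are conditional generators'' shortcut fails because generators are not unique is also well taken.
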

\begin{proof} Using \textbf{[MP.2]}, (\ref{eq:phi-psi}), and (\ref{eq:alpha-circ}), we easily compute that: 
\begin{align*}
    \beta^\dagger \circ \alpha^\circ &= \psi^\dagger \circ \phi \circ \phi^\circ \circ {\phi^\circ}^\dagger =  \psi^\dagger \circ (\phi \circ \phi^\circ)^\dagger \circ {\phi^\circ}^\dagger \\
    &= \psi^\dagger \circ (\phi^\circ \circ \phi \circ \phi^\circ)^\dagger =  \psi^\dagger \circ {\phi^\circ}^\dagger
\end{align*}
Thus $\beta^\dagger \circ \alpha^\circ = \psi^\dagger \circ {\phi^\circ}^\dagger$ as desired. 
\end{proof} 

We have made a big deal to emphasize that conditional generators and conditionals are not unique in general; we conclude this paper with a class of suitable positive maps which have a unique conditional generator. 

\begin{lemma} Take a Moore-Penrose dagger category $(\mathbb{X}, \dagger)$ with  a \dag-positive map $\begin{bmatrix} \alpha & \beta \\
\beta^\dagger & \delta 
\end{bmatrix}$ such that $\alpha$ is an isomorphism. Then $\beta^\dagger \circ \alpha^{-1}$ is its unique conditional generator. Therefore, for every object $X \in \mathbb{X}$, if $F= \left( \begin{bmatrix} f \\ g
 \end{bmatrix}, \begin{bmatrix} \alpha & \beta \\
\beta^\dagger & \delta 
\end{bmatrix}, \begin{bmatrix} s \\ t
 \end{bmatrix}  \right)$ is a map in $\mathfrak{G}\left[ (\mathbb{X}, \dagger) \right]_X$, then $F\vert_B$ as defined in (\ref{eq:FvertB}) is its unique conditional. 
\end{lemma}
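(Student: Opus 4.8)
The plan is to reduce everything to two results already proved: Proposition~\ref{prop:alpha-circ-gamma}, which builds a conditional generator from a Moore-Penrose inverse, and Theorem~\ref{thm:cond=condgen}, which sets up a bijection between conditional generators of the $\dagger$-positive component and conditionals of $F$. The only genuinely new observation needed is that when $\alpha$ is an isomorphism its Moore-Penrose inverse coincides with its ordinary inverse, so that all occurrences of $\alpha^\circ$ above may be replaced by $\alpha^{-1}$, and that invertibility of $\alpha$ leaves no room for a second conditional generator.

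First I would record that if $\alpha\colon B\to B$ is an isomorphism then $\alpha^{-1}$ satisfies \textbf{[MP.1]}--\textbf{[MP.4]}: axioms \textbf{[MP.1]} and \textbf{[MP.2]} are immediate from $\alpha\circ\alpha^{-1}=\mathsf{id}_B=\alpha^{-1}\circ\alpha$, while \textbf{[MP.3]} and \textbf{[MP.4]} hold because $\alpha\circ\alpha^{-1}=\mathsf{id}_B$ and $\alpha^{-1}\circ\alpha=\mathsf{id}_B$ are self-adjoint. By uniqueness of Moore-Penrose inverses \cite[Lemma 2.4]{EPTCS384.10}, this forces $\alpha^\circ=\alpha^{-1}$. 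Hence Proposition~\ref{prop:alpha-circ-gamma} gives that $m\coloneqq\beta^\dagger\circ\alpha^\circ=\beta^\dagger\circ\alpha^{-1}$ is a conditional generator of $\begin{bmatrix}\alpha&\beta\\\beta^\dagger&\delta\end{bmatrix}$. For uniqueness, suppose $m'\colon B\to C$ is any conditional generator; by condition (i) of Definition~\ref{def:conditional-generator} it satisfies $m'\circ\alpha=\beta^\dagger$, and composing on the right with $\alpha^{-1}$ yields $m'=\beta^\dagger\circ\alpha^{-1}=m$. Thus $\beta^\dagger\circ\alpha^{-1}$ is the unique conditional generator; condition (ii) plays no role in this step, being automatically guaranteed by Proposition~\ref{prop:alpha-circ-gamma}.

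Finally, the claim about $F$ follows directly from Theorem~\ref{thm:cond=condgen}: conditionals of $F$ are in bijective correspondence with conditional generators of its $\dagger$-positive component, and we have just shown there is exactly one such generator, namely $m=\beta^\dagger\circ\alpha^{-1}=\beta^\dagger\circ\alpha^\circ$. The corresponding conditional is $G_m$ of (\ref{def:cond-m}), which, upon recognizing $\alpha^\circ=\alpha^{-1}$, is precisely $F\vert_B$ as written in (\ref{eq:FvertB}). I do not anticipate any real obstacle: the argument is a short consequence of invertibility together with the bijection of Theorem~\ref{thm:cond=condgen}, and the only point requiring a little care is keeping track of the composition order when cancelling $\alpha$.
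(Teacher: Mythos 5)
Your proposal is correct and follows essentially the same route as the paper: identify $\alpha^\circ=\alpha^{-1}$ (the paper cites \cite[Lemma 2.7.(ii)]{EPTCS384.10} where you verify the four axioms directly, which is fine), deduce that $\beta^\dagger\circ\alpha^{-1}$ is a conditional generator, get uniqueness by cancelling the invertible $\alpha$ in $m\circ\alpha=\beta^\dagger$, and transfer uniqueness to conditionals via the bijection of Theorem~\ref{thm:cond=condgen}. No gaps.
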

\begin{proof} Recall that in a Moore-Penrose dagger category, the Moore-Penrose inverse of an isomorphism is its inverse \cite[Lemma 2.7.(ii)]{EPTCS384.10}. Thus since $\alpha$ is an isomorphism, we have that $\alpha^\circ = \alpha^{-1}$. So by Lemma~\ref{lem:MP-con-gen}, we know that $\beta^\dagger \circ \alpha^{-1}$ is a conditional generator. Now if we had another conditional generator $m$, then by definition we have that $m \circ \alpha = \beta^\dagger$. However since $\alpha$ is an isomorphism, we get that $m = \beta^\dagger \circ \alpha^{-1}$, and so we conclude that $\beta^\dagger \circ \alpha^{-1}$ is the unique conditional generator. The rest of the statement then follows as well. 
\end{proof}

\bibliographystyle{plain}      
\bibliography{Gaussbib}   

\end{document}